\def\beq{\begin{equation}}
\def\eeq{\end{equation}}
\numberwithin{equation}{section}
\newtheorem{thm}{Theorem}[section]
\newtheorem{lemma}[thm]{Lemma}
\newtheorem{prop}[thm]{Proposition}
\newtheorem{cor}[thm]{Corollary}
\theoremstyle{definition}
\newtheorem{defn}[thm]{Definition}
\theoremstyle{remark}
\newtheorem{rmk}[thm]{Remark}
\def\Cov{{\operatorname{Cov}\,}}
\def\diam{{\operatorname{diam}\,}}
\def\Leb{{\operatorname{Leb}\,}}
\def\Lip{{\operatorname{Lip}\,}}
\def\Var{{\operatorname{Var}\,}}
\def\bE{{\mathbb{E}}}
\def\bN{{\mathbb{N}}}
\def\N{{\mathbb{N}}}
\def\bP{{\mathbb{P}}}
\def\bR{{\mathbb{R}}}
\def\bZ{{\mathbb{Z}}}
\def\ta{{\tilde{a}}}
\def\tb{{\tilde{b}}}
\def\tg{{\tilde{g}}}
\def\td{{\tilde{d}}}
\def\ts{{\tilde{s}}}
\def\tnu{{\tilde{\nu}}}
\def\tS{{\tilde{S}}}
\def\tW{{\tilde{W}}}
\def\tpsi{{\widetilde{\psi}}}
\def\tsigma{{\tilde{\sigma}}}
\def\tOmega{{\tilde{\Omega}}}
\def\eps{{\varepsilon}}
\def\cA{{\mathcal{A}}}
\def\cX{{\mathcal X}}
\let\originalleft\left
\let\originalright\right
\renewcommand{\left}{\mathopen{}\mathclose\bgroup\originalleft}
\renewcommand{\right}{\aftergroup\egroup\originalright}
\title{Rates in almost sure invariance principle
for slowly mixing dynamical systems}
\author{C.~Cuny{\footnote{Universit\'e de Brest, LMBA, UMR CNRS 6205. Email: christophe.cuny@univ-brest.fr}}, J.~Dedecker{\footnote{Universit\'e Paris Descartes, Sorbonne Paris Cit\'e,  Laboratoire MAP5 (UMR 8145). Email: jerome.dedecker@parisdescartes.fr}}, A.~Korepanov{\footnote{Mathematics Institute, University of Warwick, Coventry, CV4 7AL, UK. Email: a.korepanov@warwick.ac.uk}}, Florence Merlev\`ede{\footnote{Universit\'{e} Paris-Est, LAMA (UMR 8050), UPEM, CNRS, UPEC. Email: florence.merlevede@u-pem.fr}}}
\date{12 November 2018}
\begin{document}

\maketitle

\begin{abstract}
We prove the one-dimensional almost sure invariance principle with essentially optimal rates for slowly (polynomially) mixing deterministic dynamical systems, such as Pomeau-Manneville intermittent maps,
with H\"older continuous observables.

Our rates have form $o(n^\gamma L(n))$, where $L(n)$ is a slowly varying function and
$\gamma$ is determined by the speed of mixing.
We strongly improve previous results where the best available
rates did not exceed $O(n^{1/4})$.

To break the $O(n^{1/4})$ barrier, we represent the dynamics as a Young-tower-like 
Markov chain and adapt the methods of Berkes-Liu-Wu and
Cuny-Dedecker-Merlev\`ede on the Koml\'os-Major-Tusn\'ady approximation
for dependent processes.
\end{abstract}

\noindent{\it Keywords:}  Strong invariance principle, 
KMT approximation, Nonuniformly expanding dynamical systems, Markov chain.

\smallskip

\noindent{\it MSC: } 60F17, 37E05.

\section{Introduction and statement of results}
\label{LSVmap}

In their study of turbulent bursts, Pomeau and Manneville \cite{PM80}
introduced simple dynamical systems, exhibiting intermittent transitions
between ``laminar'' and ``turbulent'' behaviour. 
Over the last few decades, such maps have been very popular
in dynamical systems.
We consider a version of Liverani, Saussol and Vaienti \cite{LSV99},
where for a fixed \(\gamma \in (0,1)\), the map
\(f \colon [0,1] \to [0,1]\) is given by
\beq \label{eq:LSV}
 f(x) = \begin{cases}
    x(1 + 2^\gamma x^\gamma), & x \leq 1/2 \\
    2 x - 1, & x > 1/2
  \end{cases}
\eeq

There exists a unique absolutely continuous $f$-invariant probability measure
$\mu$ on $[0,1]$, which is equivalent to the Lebesgue measure.

The intermittent behaviour comes from the fact that $0$ is a fixed point with 
$f'(0) = 1$.
Hence if a point $x$ is \emph{close} to $0$, then its orbit 
$(f^n(x))_{n \geq 0}$ stays around $0$ for a \emph{long} time.
The degree of intermittency is given by the parameter $\gamma$ and
is quantified by choosing an interval away from $0$ such as
$Y = ]1/2,1]$ and considering the first return time $\tau \colon Y \to \bN$,
\[ 
  \tau(x) = \min \{n \geq 1 \colon f^n(x) \in Y\}
  \, .
\]
It is straightforward to verify \cite{G04t,Y99} that for some $C > 0$
all $n \geq 1$,
\beq
  \label{eq:LSV-tails}
  C^{-1} n^{- 1/\gamma} 
  \leq \Leb(\tau \geq n) 
  \leq C n^{- 1/\gamma}
  \, ,
\eeq
where $\Leb$ denotes the Lebesgue measure on $Y$.



Suppose that $\varphi \colon [0,1] \to \bR$ is a H\"older continuous observable
with $\int \varphi \, d\mu = 0$
and let
\[
  S_n(\varphi) = \sum_{k=0}^{n-1} \varphi \circ f^k
  .
\]
We consider $S_n(\varphi)$ as a discrete time random process
on the probability space $([0,1], \mu)$.
Since $\mu$ is $f$-invariant, the increments $(\varphi \circ f^n)_{n \geq 0}$ are stationary.
Using the bound~\eqref{eq:LSV-tails}, Young~\cite{Y99} proved that the correlations decay polynomially:
\beq \label{covinesebfirst}
  \Bigl |  \int \varphi \; \varphi \circ f^n \, d\mu \Bigr |
  = O\bigl( n^{-( 1-\gamma ) / \gamma} \bigr) \, .
\eeq

If $\gamma < 1/2$, then 
$S_n(\varphi)$ satisfies the central limit theorem (CLT),
that is $n^{-1/2} S_n(\varphi)$ converges in distribution to a normal random variable with
variance
\beq \label{defc2}
  c^2 
  = \int \varphi^2 \, d\mu + 2 \sum_{n=1}^\infty \int \varphi \, \varphi \circ f^n\, d\mu
\,   .
\eeq
By~\eqref{covinesebfirst}, the series above converges absolutely. 
The asymptotics in~\eqref{covinesebfirst} is sharp \cite{G04,G04t,H04,S02,Y99}, and
for each $\gamma \geq 1/2$ there are observables $\varphi$ for which the series for
$c^2$ diverges, and the CLT does not hold.
We are interested in the case when the CLT holds, so
from here on we restrict to $\gamma < 1/2$.

In parallel with \eqref{eq:LSV}, we consider a very similar map
\beq \label{eq:HG}
 f(x) = \begin{cases}
    x(1 + x^\gamma \rho(x)), & x \leq 1/2 \\
    2 x - 1, & x > 1/2
  \end{cases}
  ,
\eeq
where, following Holland~\cite{H05} and Gou\"ezel~\cite{G04t},
$\rho \in \mathcal{C}^2((0,1/2], (0, \infty))$ is
slowly varying at $0$ and satisfies:
\begin{itemize}
  \item $x \rho'(x) = o(\rho(x))$ and $x^2 \rho''(x) = o(\rho(x))$;
  \item $f(1/2) = 1$ and $f'(x) >1$  for all $x \neq 0$;
  \item
    \(
      \displaystyle \int_0^{1/2} \frac{1}{x (  \rho(x) )^{1/\gamma}} \, dx < \infty \, .
    \)
\end{itemize}
For example, $\rho(x) = C |\log x|^{(1+ \eps) \gamma}$ with
$\eps > 0$ and $C = 2^\gamma (\log 2)^{-( 1+\eps) \gamma}$. 

Then in place of the bound $\Leb(\tau \geq n) \leq C n^{-1/\gamma}$ in 
\eqref{eq:LSV-tails} we have a slightly stronger bound
\cite[Thm~1.4.10, Prop.~1.4.12, Lem.~1.4.14]{G04t}:
\beq
  \label{eq:HG-tails}
  \int_Y \tau^{1/\gamma} \, d\Leb
  < \infty
  \, .
\eeq

\begin{rmk}
  The analysis above for the map \eqref{eq:LSV} applies to the map~\eqref{eq:HG}
  with minor differences: the correlations decay slightly faster and
  the CLT holds also for $\gamma = 1/2$ (see \cite{G04t}).
\end{rmk}

Further we use $f$ to denote either of the maps~\eqref{eq:LSV} 
and~\eqref{eq:HG}, specifying which one we refer to where it
makes a difference.

A strong generalization of the CLT and the aim of our work is the following property:

\begin{defn}
  We say that a real-valued random process $(S_n)_{n \geq 1}$ satisfies
  the \emph{almost sure invariance principle}  (ASIP)
  (also known as a \emph{strong invariance principle})
  with rate $o(n^\beta)$, $\beta \in (0,1/2)$, and variance $c^2$
  if one can redefine $(S_n)_{n \geq 1}$ without changing its distribution on a (richer) probability space on which
  there exists a Brownian motion $(W_t)_{t \geq 0}$ with variance $c^2$ such that
  \[
    S_n = W_n + o(n^\beta)
    \quad \text{almost surely.}
  \]
  We define similarly the ASIP with rates $o(r_n)$ or $O(r_n)$
  for deterministic sequences $(r_n)_{n \geq 1}$.
\end{defn}

For the map~\eqref{eq:LSV} with H\"older continuous observables $\varphi$, the ASIP for
$S_n(\varphi)$ has been first proved by Melbourne and Nicol \cite{MN05}, albeit 
without explicit rates. In \cite[Thm.~1.6 and Rmk.~1.7]{MN09}, 
the same authors obtained the ASIP with rates
\[
  S_n(\varphi) - W_n =
  \begin{cases}
    o(n^{\gamma/2 + 1/4 + \eps}), & \gamma \in ]1/4, 1/2[ \\
    o(n^{3/8 + \eps}), & \gamma \in ]0, 1/4]
  \end{cases}
\]
for all $\eps > 0$.
Their proof is based on Philipp and Stout \cite[Thm.~7.1]{PS}.
This result has been subsequently improved. Using the approach for the
reverse martingales of Cuny and Merlev\`ede \cite{CM15},
Korepanov, Kosloff and Melbourne \cite{KKM} proved the ASIP with rates
\[
  S_n(\varphi) - W_n =
  \begin{cases}
    o(n^{\gamma + \eps}), & \gamma \in [1/4, 1/2[ \\
    O ( n^{1/4} (\log n)^{1/2} (\log \log n)^{1/4} ), & \gamma \in ]0, 1/4[
  \end{cases}
\]
for all $\eps > 0$. (Subsection~\ref{sec:R-ASIP_NUE} provides some more details.)

When $\varphi$ is not H\"older continuous, the situation is more delicate. For instance, functions with discontinuities are not easily amenable to the method of Young towers used in \cite{KKM,MN05,MN09}.
For $\varphi$ of bounded variation, using the conditional quantile method, Merlev\`ede and Rio \cite{MR12} proved the ASIP with rates
\[
  S_n(\varphi) - W_n =
  O ( n^{\gamma'} (\log n)^{1/2} (\log \log n)^{ (1+ \eps) \gamma'} )
\]
for all $\eps > 0$, where $\gamma' = \max\{\gamma, 1/3\}$.
Besides considering observables of bounded variation, the results of \cite{MR12} also cover a large class of
unbounded observables.

In all the papers above, the rates are not better than $O(n^{1/4})$,
which could be perceived as largely suboptimal when $0<\gamma<1/4$ due to the intuition
coming from the processes with iid increments \cite{KMT75} and 
recent related work \cite{BLW14,CDM17}. Our main result is:

\begin{thm} 
  \label{ThASIPLSV}
  Let $\gamma \in (0,1/2)$ and $\varphi \colon [0,1] \to \bR$ be a H\"older continuous observable
  with $\int \varphi \, d\mu = 0$. 
  For the map~\eqref{eq:LSV},
  the random process $S_n(\varphi)$ satisfies the ASIP
  with variance $c^2$ given by \eqref{defc2} and rate
  $o(n^{\gamma} (\log n)^{ \gamma + \eps} )$ for all $\eps >0$.
  For the map~\eqref{eq:HG},
  the random process $S_n(\varphi)$ satisfies the ASIP
  with variance $c^2$ given by \eqref{defc2} and rate
  $o(n^{\gamma})$.

\end{thm}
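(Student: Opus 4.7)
The plan is to break the earlier $O(n^{1/4})$ barrier by avoiding martingale approximation altogether: I would represent the dynamics as a Young-tower Markov chain, establish a Koml\'os--Major--Tusn\'ady-type approximation directly on the induced Gibbs--Markov system in the spirit of Berkes--Liu--Wu and Cuny--Dedecker--Merlev\`ede, and then pass back to the original time scale by a renewal-type argument.

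\emph{Inducing scheme.} Take $Y = \,]1/2,1]$ with first-return time $\tau$ and induced map $F = f^\tau \colon Y \to Y$, which is a full-branch Gibbs--Markov map with respect to $\{\tau = n\}_{n \ge 1}$ and carries an invariant probability $\tnu$ whose density is bounded above and below. The Young tower $\Delta = \{(y,\ell) : y \in Y,\, 0 \le \ell < \tau(y)\}$, with map $\tilde f(y,\ell) = (y,\ell+1)$ if $\ell < \tau(y)-1$ and $(Fy,0)$ otherwise, projects to $(f,\mu)$ and can be recoded as a stationary countable-state Markov chain. Under this recoding $\varphi \circ f^k$ becomes a functional of the chain. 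The induced observable $\tilde\varphi = \sum_{k=0}^{\tau-1} \varphi \circ f^k$ satisfies $|\tilde\varphi| \le \|\varphi\|_\infty \tau$, so by \eqref{eq:LSV-tails} it lies in $L^p(\tnu)$ for every $p < 1/\gamma$, and in $L^{1/\gamma}(\tnu)$ in the stronger setting of \eqref{eq:HG-tails}. Because $F$ is Gibbs--Markov, $(\tilde\varphi \circ F^n)_{n \ge 0}$ is a stationary, exponentially weakly dependent sequence whose heavy-tailedness is governed entirely by the return-time statistics.

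\emph{KMT on the induced system.} On the induced chain I would run a dyadic block construction: split a block of length $2^{k}$ into smaller conditioning pieces and replace each block sum by a Gaussian with matching conditional mean and variance, using a Sakhanenko-type quantile coupling whose error at scale $k$ is driven by the $L^p$ norm of the block sum. The Gibbs--Markov bounds on iterates of the transfer operator of $F$ provide the sharp conditional moment and smoothness estimates needed at every scale. Summing the errors across scales and balancing, following the schemes of Berkes--Liu--Wu and Cuny--Dedecker--Merlev\`ede, should produce an induced ASIP
\[
  \tilde S_N(\tilde\varphi) = \tilde W_N + o\bigl(N^{\gamma}(\log N)^{\gamma+\eps}\bigr)
\]
in the LSV case, with variance $\tsigma^2 = \tnu(\tilde\varphi^2) + 2\sum_{n\ge 1}\tnu(\tilde\varphi \cdot \tilde\varphi \circ F^n)$; the stronger moment \eqref{eq:HG-tails} in the HG case replaces the rate by $o(N^{\gamma})$.

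\emph{Transfer to the original time scale.} Let $N(n) = \#\{0 \le k < n : f^k \in Y\}$. Then $S_n(\varphi) - \tilde S_{N(n)}(\tilde\varphi)$ is an overshoot controlled by $\tau \circ F^{N(n)}$, hence $o(n^{\gamma})$ almost surely by the tail bound and Borel--Cantelli. The renewal relation $N(n) = n/\bar\tau + \text{fluctuation}$, with $\bar\tau = \tnu(\tau)$ and a fluctuation of order $n^{\gamma} L(n)$ obtained by applying the same induced KMT argument to $\tau - \bar\tau$ on $Y$, enables a deterministic time-change of $\tilde W$ into a Brownian motion $W$ with variance $c^2 = \tsigma^2/\bar\tau$, which agrees with \eqref{defc2} by Kac's formula. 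Collecting the errors at each stage yields the claimed rate.

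\emph{Main obstacle.} The decisive step is the KMT bound on the induced chain at the sharp exponent $\gamma$. The $O(n^{1/4})$ ceiling of \cite{KKM,MN05,MN09,MR12} arises from the Strassen-type remainder intrinsic to any martingale-approximation route. Breaking through requires running the quantile coupling directly on the dependent induced sequence, with controlled conditional distributions and tight block-moment estimates at every dyadic scale; adapting the independent-case Berkes--Liu--Wu/Cuny--Dedecker--Merlev\`ede scheme to our Gibbs--Markov setting is the technical heart of the proof. In the LSV case the borderline moment condition ($L^p$ for $p < 1/\gamma$ but not $p = 1/\gamma$) is precisely what produces the slowly varying factor $(\log n)^{\gamma+\eps}$, while the sharper moment \eqref{eq:HG-tails} removes it in the HG case.
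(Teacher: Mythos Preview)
Your inducing-then-lifting scheme has a genuine gap at the transfer step, and it is precisely the step that reinstates the $O(n^{1/4})$ barrier you are trying to break. You write that $N(n)=n/\bar\tau+\text{fluctuation}$ with a fluctuation ``of order $n^{\gamma}L(n)$'', but applying the induced KMT argument to $\tau-\bar\tau$ only tells you that the \emph{approximation error} $\bigl|\sum_{k<N}(\tau_k-\bar\tau)-B'_N\bigr|$ is $o(N^{\gamma}L(N))$; the fluctuation itself is the size of the Brownian motion $B'_N$, hence of order $\sqrt{N}\asymp\sqrt{n}$ (recall $\tau$ has finite variance since $1/\gamma>2$). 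Consequently, with the deterministic time change $W_t=\tW_{t/\bar\tau}$ one has
\[
  \bigl|\tW_{N(n)}-W_n\bigr|=\bigl|\tW_{N(n)}-\tW_{n/\bar\tau}\bigr|
  \;\asymp\;\sqrt{|N(n)-n/\bar\tau|\,\log n}
  \;\asymp\; n^{1/4}(\log n)^{1/2}
\]
by the modulus of continuity of Brownian motion. A joint two-dimensional ASIP for $(\tilde S_N,T_N)$ does not help: the random time change $\tW_{N(n)}$ is not a Brownian motion in $n$, and any attempt to straighten it incurs the same $n^{1/4}$ cost. So your route recovers, at best, the rates of \cite{KKM} and cannot reach $o(n^{\gamma})$ for $\gamma<1/4$.

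The paper avoids this obstruction by never inducing. It encodes the Young tower as a stationary Markov chain $(g_n)_{n\ge 0}$ in \emph{original} time (Subsection~\ref{constructionMC}) and writes $\varphi\circ f^n$ as $\psi(g_n,g_{n+1},\ldots)$ for a H\"older $\psi$ on the trajectory space (Corollary~\ref{cor:proj}). The Berkes--Liu--Wu/Cuny--Dedecker--Merlev\`ede scheme is then run directly on this chain; the only new difficulty is that $\psi$ depends on the whole future, and this is handled by a two-stage truncation (Steps~1 and~2 in Section~\ref{sec:proofASIP}): first replace $X_n$ by a function of $(g_n,\ldots,g_{n+m_\ell})$, then by a function of finitely many innovations $(\eps_{n-m_\ell},\ldots,\eps_{n+m_\ell})$. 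Both truncation errors are controlled through the meeting time $T$ of the chain (Section~\ref{sec:MT}), whose moments come from the tails of $\tau$ via Lemma~\ref{lem:nngh}. Working in the natural time scale is what lets the method deliver $o(n^{\gamma})$; your induced ASIP is correct on $(Y,F)$, but there is no cheap way to pull it back to $(X,f)$ at that precision.
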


The rates in Theorem~\ref{ThASIPLSV} are optimal in the following sense:
\begin{prop}
  \label{lem:OR}
  Let $f$ be the map \eqref{eq:LSV}. There exists a H\"older continuous
  observable $\varphi$ with $\int \varphi \, d \mu = 0$ such that
  \[
    \limsup_{n \to \infty} (n \log n)^{-\gamma} |S_n(\varphi) - W_n|
    > 0
  \]
  for all Brownian motions $(W_t)_{t \geq 0}$ defined on the same (possibly enlarged)
  probability space as $(S_n(\varphi))_{n \geq 0}$.
  Hence, one cannot take $\eps=0$ in Theorem \ref{ThASIPLSV}.
\end{prop}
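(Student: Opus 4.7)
I would pick a H\"older observable with $\varphi(0) \ne 0$ and $\int \varphi \, d\mu = 0$ -- for instance $\varphi(x) = \bar x - x$ where $\bar x = \int y \, d\mu(y)$, so that $a := \varphi(0) = \bar x > 0$ -- and show that long excursions near the indifferent fixed point force large, near-deterministic ``jumps'' in $S_n(\varphi)$ that no continuous Brownian trajectory can shadow.

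The first step is an orbit estimate: for $x \in Y$ with $\tau(x) = T$, the standard LSV asymptotic $f^k(x) \sim C (T - k)^{-1/\gamma}$ for $1 \le k \le T - 1$, combined with the H\"older bound $|\varphi(y) - a| \le C' y^\alpha$, yields
\[
  S_T(\varphi)(x) = a T + o(T) \qquad \text{as } T \to \infty,
\]
since $\sum_{j=1}^{T-1} j^{-\alpha/\gamma} = o(T)$ for any $\alpha > 0$. The second step is to locate long excursions. The induced map $F = f^\tau \colon Y \to Y$ is Gibbs-Markov, so the return times $\tau_i := \tau \circ F^{i-1}$ form a stationary, exponentially mixing sequence, and the lower tail in \eqref{eq:LSV-tails} transfers to $\bP(\tau > t) \ge c \, t^{-1/\gamma}$. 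Splitting indices into dyadic blocks $I_k = (2^{k-1}, 2^k]$ with thresholds $u_k = c (2^k \log 2^k)^\gamma$, the tail bound gives $\bP(\max_{I_k} \tau_i > u_k) \ge c'/k$ and $\sum 1/k = \infty$. A Borel-Cantelli variant for exponentially mixing sequences (or, equivalently, an i.i.d.\ coupling on the Young-tower extension) then produces, almost surely, a random subsequence $n_k \to \infty$ and excursion intervals $[n_k - T_k, n_k] \subset [0, n_k]$ with $T_k \ge c''(n_k \log n_k)^\gamma$.

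For the third step, fix any coupling with a Brownian motion $(W_t)$. L\'evy's modulus of continuity gives, almost surely,
\[
  \sup_{0 \le s < t \le n,\ t - s \le T} |W_t - W_s| = O\bigl(\sqrt{T \log(n/T)}\bigr),
\]
which is $o(T)$ for $T = (n \log n)^\gamma$ with $\gamma < 1/2$ (because $\sqrt{T \log n}/T = \sqrt{\log n/T} \to 0$). Combining with the excursion estimate on $[n_k - T_k, n_k]$,
\[
  |S_{n_k} - W_{n_k}| + |S_{n_k - T_k} - W_{n_k - T_k}| \ge a T_k - o(T_k) \ge \tfrac{a}{2} T_k
\]
for all large $k$. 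Picking the larger of the two terms on the left yields an index $n^\star \in [n_k - T_k, n_k] \subset [n_k/2, n_k]$ (using $T_k \le n_k/2$, valid since $\gamma < 1/2$) at which $|S_{n^\star} - W_{n^\star}| \ge (a/4) T_k$; since $(n^\star \log n^\star)^\gamma \asymp (n_k \log n_k)^\gamma$, we obtain $|S_{n^\star} - W_{n^\star}| \ge c''' (n^\star \log n^\star)^\gamma$ infinitely often, whence $\limsup (n \log n)^{-\gamma} |S_n - W_n| > 0$ almost surely -- thereby ruling out $\eps = 0$ in Theorem~\ref{ThASIPLSV}.

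The main obstacle will be the Borel-Cantelli step for the return times: under the invariant measure $(\tau_i)_{i \ge 1}$ is stationary but not i.i.d. Two standard routes are an i.i.d.\ coupling via the Young-tower extension, or a second Borel-Cantelli lemma tailored to exponentially mixing sequences (with a small trimming of each block to create mixing gaps). The remaining pieces -- the deterministic excursion asymptotic for H\"older $\varphi$ and L\'evy's modulus for Brownian motion -- are elementary.
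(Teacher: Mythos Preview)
Your approach is correct and runs parallel to the paper's argument (Proposition~5.4 and its proof), with one notable simplification on the paper's side worth mentioning. The paper chooses the observable differently: it takes $\psi$ bounded with $\psi\equiv 0$ on $X\setminus Y$ and $\mu(\psi)>0$, and sets $\varphi=\psi-\mu(\psi)$ (a Lipschitz example is built in a remark). With this choice, during an excursion away from $Y$ the observable is \emph{exactly} the constant $-\mu(\psi)$, so the increment over an excursion of length $T$ equals $-\mu(\psi)\,T+O(1)$ with no need for your orbit asymptotic $f^k(x)\asymp (T-k)^{-1/\gamma}$ or the H\"older estimate $\sum_j j^{-\alpha/\gamma}=o(T)$. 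Your choice $\varphi(x)=\bar x - x$ is perfectly valid and in fact shows the phenomenon for a wider class of H\"older observables (any with $\varphi(0)\neq 0$), at the modest price of the extra deterministic estimate.

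The remaining structure is the same. For the Borel--Cantelli step, the paper proceeds exactly along the line you sketch: it uses the exponential decorrelation of the induced Gibbs--Markov map (via Aaronson--Denker) to verify the Erd\H{o}s--R\'enyi criterion $\sum_{k,\ell} \mu_Y(A_k\cap A_\ell)\big/\big(\sum_k \mu_Y(A_k)\big)^2\to 1$ for the events $A_n=\{\tau\circ F^n\ge (n\log n)^{1/\beta}\}$, together with the ergodic theorem for $n^{-1}\sum\tau_i$ to pass from induced time to real time. The paper then defers the Gaussian comparison to \cite[Prop.~15]{CDM17}, whereas you carry it out directly via L\'evy's modulus; both work, and in fact the paper's Proposition~5.4 is slightly stronger, ruling out not just Brownian motions but arbitrary stationary centered Gaussian sequences with converging normalized variance. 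For your L\'evy step, note that $n_k,T_k$ are random and may be correlated with $W$, so you should invoke a uniform increment bound of Cs\"org\H{o}--R\'ev\'esz type (e.g.\ \cite[Thm.~3.2A]{HR}, also used in the paper's appendix) rather than a fixed-interval modulus; this gives the needed $o(T_k)$ almost surely along any random subsequence.
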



\begin{rmk}\label{cob}
If $c^2=0$, the rate in the ASIP can be improved to $O(1)$.
Indeed, then it is well-known that $\varphi$ is a {\it coboundary} in the sense that
$\varphi= u- u \circ f$ with some $u \colon [0,1] \to \bR$. By \cite[Prop.~1.4.2]{G04t}, 
$u$ is bounded, thus $S_n(\varphi)$ is bounded uniformly in $n$.
\end{rmk}

\begin{rmk} \label{rmk:1-6}
It is possible to relax the assumption that $\varphi$ is H\"older continuous.
As a simple example, Theorem~\ref{ThASIPLSV} holds if $\varphi$ is H\"older on $(0,1/2)$ 
and on $(1/2,1)$, with a discontinuity at $1/2$.
See Subsection \ref{piecewise-holder} for further extensions.
\end{rmk}

\begin{rmk}
Intermittent maps are prototypical examples
of \emph{nonuniformly expanding dynamical systems,}
to which our results apply in a general setup, and so does the discussion
of rates preceding Theorem~\ref{ThASIPLSV}.
We focus on the maps~\eqref{eq:LSV} and~\eqref{eq:HG} for simplicity only,
and discuss the generalization in Section~\ref{sec:NUE}. 
\end{rmk}

The paper is organized as follows. In Section~\ref{sec:RMC},
following Korepanov~\cite{K17}, we represent the dynamical systems~\eqref{eq:LSV}
and~\eqref{eq:HG} as a function of the trajectories of a particular Markov chain; further, we introduce a
\emph{meeting time} related to the Markov chain and estimate its moments.
In  Section~\ref{sec:proofASIP} we prove Theorem~\ref{ThASIPLSV}
for our new process (which is a function of the whole future trajectories 
of the Markov chain) by adapting the ideas of 
Berkes, Liu and Wu~\cite{BLW14} and Cuny, Dedecker and Merlev\`ede~\cite{CDM17}.
In Section~\ref{sec:NUE} we generalize our results to the class of
nonuniformly expanding dynamical systems and show the optimality of the rates.

Throughout, we use the notation $a_n \ll b_n$ and $a_n = O(b_n)$ interchangeably,
meaning that there exists a positive  constant $C$ not depending on $n$ such that 
$a_n \leq  Cb_n $ for all sufficiently large $n$. As usual, $a_n = o (b_n)$ means that $\lim_{n \rightarrow \infty} a_n /b_n =0$. Recall that $v : X \rightarrow \bR$ is a H\"older observable 
(with a H\"older exponent $ \eta >0$) on a bounded metric space $(X,d)$ if $\Vert v \Vert_{\eta} = \vert v \vert_{\infty} + \vert v \vert_{\eta} < \infty$ where $\vert v \vert_{\infty}  = \sup_{x \in X} |v(x)|$ and 
$ \vert v \vert_{\eta} = \sup_{x \neq y} \frac{|v(x) - v(y)|}{ d(x,y)^{\eta}} $. All along the paper, we use the notation $\bN=\{0,1,2,\ldots\}$.

\section{Reduction to a Markov chain}
\label{sec:RMC}

\subsection{Outline}

In this section we construct a stationary Markov chain $g_0, g_1, \ldots$ 
on a countable state space $S$, the space of all possible future trajectories $\Omega$
and an observable $\psi \colon \Omega \to \bR$
such that the random process $( X_n)_{n \geq 0}$ where $X_n = \psi(g_n, g_{n+1}, \ldots)$
has the same distribution as $(\varphi \circ f^n)_{n \geq 0}$, the increments
of $(S_n(\varphi))_{n \geq 1}$.

Our Markov chain is in the spirit of the classical Young towers~\cite{Y99}. 
Just as the Young towers for the maps~\eqref{eq:LSV} and \eqref{eq:HG},
our construction enjoys recurrence properties related to the choice of $\gamma$,
and we supply $\Omega$ with a metric, with respect to which $\psi$ is Lipschitz.

We follow the ideas of \cite{K17r}, though in the setup of the 
maps~\eqref{eq:LSV} and~\eqref{eq:HG} we are able to make the proofs simpler
and hopefully easier to read.

\subsection{Basic properties of intermittent maps}
\label{sec:BPIM}

A standard way to work with maps \eqref{eq:LSV}, \eqref{eq:HG} is an inducing scheme.
As in Section~\ref{LSVmap}, set \(Y = ]1/2,1]\) and let \(\tau \colon Y \to \bN\) be the \emph{inducing time,}
\(\tau(x) = \min\{ k \geq 1 \colon f^k(x) \in Y\}\).
Let \(F \colon Y \to Y\) be the \emph{induced map}, \(F(x) = f^{\tau(x)}(x)\).
Let \(\alpha\) be the partition of \(Y\) into the intervals where \(\tau\) is
constant. Let $\beta = 1/\gamma$.

We remark that $\gcd\{ \tau(a) \colon a \in \alpha \} = 1$.

Let \(m\) denote the Lebesgue measure on \(Y\), normalized so that it is
a probability measure. Recall that we have the bounds
\begin{itemize}
  \item \(m(\tau \geq n) \leq C n^{-\beta}\) for all $n \geq 1$
    for the map~\eqref{eq:LSV};
  \item $\int \tau^{\beta} \, dm < \infty$
    for the map~\eqref{eq:HG}.
\end{itemize}

The induced map \(F\) satisfies the following properties:
\begin{itemize}
  \item (full image) \(F \colon a \to Y\) is a bijection for each \(a \in \alpha\);
  \item (expansion)  there is \(\lambda > 1\) such that \(|F'| \geq \lambda\);
  \item (bounded distortion) there is a constant \(C_d \geq 0\) such that
    \[
      \bigl| \log |F'(x)| - \log |F'(y)| \bigr|
      \leq C_d |F(x) - F(y)|
    \]
    for all \(x,y \in a\), \(a \in \alpha\).
\end{itemize}

\subsection{Disintegration of the Lebesgue measure}

The properties in Subsection~\ref{sec:BPIM} allow a disintegration of the
measure \(m\), as described in this subsection.

Let \(\cA\) denote the set of all finite words in the alphabet \(\alpha\),
not including the empty word. For \(w = a_0 \cdots a_{n-1} \in \cA\),
let \(|w| = n\) and let \(Y_w\) denote the cylinder
of points in \(Y\) which follow the itinerary of letters of \(w\) under
the iteration of \(F\):
\[
  Y_w = \{y \in Y \colon F^{k}(y) \in a_k \text{ for } 0 \leq k \leq n-1 \}
  .
\]
Let also \(h \colon \cA \to \bN\), \(h(w) = \tau(a_0) + \cdots + \tau(a_{n-1})\)
for \(w = a_0 \cdots a_{n-1}\).

For \(w_0, \ldots, w_n \in \cA\), let
\(w_0 \cdots w_n \in \cA\) denote the concatenation.

\begin{prop}
  \label{prop:anqw}
  For each infinite sequence \(a_0, a_1, \ldots \in \alpha\), there exists a unique
  \(y \in Y\) such that \(F^n(y) \in a_n\) for all \(n \geq 0\).
  
  In particular, for each sequence
  \(w_0, w_1, \ldots \in \cA\) there exists a unique
  \(y \in Y\) such that
  \(y \in Y_{w_0}\), \(F^{|w_0|}(y) \in Y_{w_1}\),
  \(F^{|w_0| + |w_1|}(y) \in Y_{w_2}\),
  and so on.
\end{prop}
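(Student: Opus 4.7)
The statement is a standard fact for full-branch expanding maps with bounded distortion, so my plan is to carry out the classical nested-intervals argument built from the three properties recalled in Subsection~\ref{sec:BPIM}.

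First I would fix a sequence $a_0, a_1, \ldots \in \alpha$ and define the cylinders
\[
  C_n = Y_{a_0 a_1 \cdots a_n} = \bigcap_{k=0}^{n} F^{-k}(a_k)
  \, ,
\]
and prove by induction on $n$ that each $C_n$ is a non-empty subinterval of $Y$, and that $F^n \colon C_n \to a_n$ is a bijection. The base case is trivial since $C_0 = a_0 \in \alpha$. For the inductive step, the full-image property gives that $F \colon a_n \to Y$ is a bijection, so $a_n \cap F^{-1}(a_{n+1})$ is an interval mapped bijectively by $F$ onto $a_{n+1}$; pulling back through the inductive bijection $F^n \colon C_n \to a_n$ (which is a homeomorphism of intervals by expansion) produces $C_{n+1}$ as a non-empty subinterval of $C_n$ mapped bijectively onto $a_{n+1}$ by $F^{n+1}$.

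Next I would show $\diam C_n \to 0$. By the chain rule and the expansion property $|F'| \geq \lambda > 1$, we have $|(F^n)'| \geq \lambda^n$ on $C_n$. Since $F^n$ is a bijection from $C_n$ onto $a_n \subset Y$, the mean-value theorem gives
\[
  \diam C_n \leq \lambda^{-n} \diam a_n \leq \lambda^{-n} \diam Y = \tfrac{1}{2}\lambda^{-n}
  \, .
\]
The nested closed intervals $\overline{C_n}$ therefore have a unique common point $y \in \overline{Y}$, and a short argument using bounded distortion (or just continuity of $F$ on each branch) shows $F^n(y) \in a_n$ for every $n$; the endpoint ambiguity, if one worries about it, is harmless since $\alpha$ is a partition up to measure zero and the intersection point is interior to the nested cylinders for all $n$ large enough. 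Uniqueness is immediate: any two such points must lie in every $C_n$, contradicting $\diam C_n \to 0$.

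For the second assertion I would simply concatenate: given $w_0, w_1, \ldots \in \cA$, write the infinite sequence of letters $a_0 a_1 \cdots = w_0 w_1 w_2 \cdots \in \alpha^{\bN}$ and apply the first part. The iterates $F^{|w_0| + \cdots + |w_{i-1}|}(y)$ then automatically belong to $Y_{w_i}$ by construction. The only mild obstacle I anticipate is the endpoint bookkeeping in the previous paragraph; everything else is a clean consequence of full image plus uniform expansion.
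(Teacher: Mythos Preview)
Your nested-intervals outline matches the paper's approach almost exactly: define the cylinders, use expansion to get $\diam C_n \leq \lambda^{-n}$, and intersect the closures. Uniqueness and the ``in particular'' part are handled the same way.

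The gap is precisely the step you label a ``mild obstacle'' and then wave away. The paper's proof spends essentially all of its content on this point, and your proposed justifications do not work. Bounded distortion and branch continuity do not by themselves force $y \in C_n$ rather than $y \in \overline{C_n} \setminus C_n$; nor is $\alpha$ merely ``a partition up to measure zero'' here---it is a genuine partition of $Y = ]1/2,1]$ into half-open intervals $]x_{k+1}, x_k]$, and the cylinders $Y_{w_n}$ are likewise half-open (open on the left, closed on the right). Your claim that the intersection point is eventually interior is exactly what needs proving.

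The paper's argument is short but specific: if $y \notin \cap_n Y_{w_n}$, then $y$ is the left endpoint of some $\overline{Y}_{w_n}$. Since $F^n$ is an orientation-preserving homeomorphism from $\overline{Y}_{w_n}$ onto $[1/2,1]$, this left endpoint maps to $1/2$; but $\overline{a}_n \subset Y$ stays bounded away from $1/2$ (the elements of $\alpha$ accumulate at $1/2$ without any single one touching it), so $\overline{Y}_{w_{n+1}} = (F^n)^{-1}(\overline{a}_n) \cap \overline{Y}_{w_n}$ cannot contain that left endpoint. This contradicts $y \in \overline{Y}_{w_{n+1}}$. You should replace the hand-wave with this (or an equivalent) argument.
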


\begin{proof}
  Uniqueness of $y$ follows from expansion of $F$, so it is enough to show
  existence.
  
  Let $w_n = a_0 \cdots a_{n-1}$.
  Note that $Y_{w_n}$, $n \geq 0$, is a nested sequence of intervals
  with shrinking to $0$ length, closed on the right and
  open on the left. Let $y$ be the only point in the intersection
  of their closures,
  \(
    \{y\} = \cap_n \bar{Y}_{w_n}
  \).
  
  Suppose that $y \not \in \cap_n Y_{w_n}$. Then $y \in \bar{Y}_{w_n} \setminus Y_{w_n}$
  for some $n$, thus $y$ is a left end-point of $Y_{w_n}$.
  Observe that $\bar{Y}_{w_{n+1}}$ is contained in $\bar{Y}_{w_n}$
  but cannot contain its left end-point, i.e.\ $y \not \in \bar{Y}_{w_{n+1}}$.
  This is a contradiction, proving that $y \in \cap_n Y_{w_n}$.
  Hence $F^n(y) \in a_n$ for all $n$, as required.
\end{proof}

\begin{prop}
  \label{prop:nai}
  There exist a probability measure \(\bP_\cA\) on \(\cA\) and a disintegration
  \[
    m = \sum_{w \in \cA} \bP_\cA(w) m_w,
  \]
  where
  \begin{itemize}
    \item each \(m_w\) is a probability measure supported on \(Y_w\);
    \item \((F^{|w|})_* m_w = m\);
    \item $\bP_\cA(w) > 0$ for each $w$;
    \item for the map \eqref{eq:LSV}, \(\bP_\cA(h \geq k) \leq C_\beta k^{-\beta}\) for all \(k \geq 1\),
      where \(C_\beta > 0\) is a constant;
    \item for the map \eqref{eq:HG}, \(\int h^\beta \, d\bP_\cA < \infty\).
  \end{itemize}
\end{prop}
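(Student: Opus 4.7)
The plan is to construct $\bP_\cA$ and the family $\{m_w\}$ via a natural random-stopping-time scheme. Fix a probability distribution $(p_n)_{n \geq 1}$ on $\bN_{\geq 1}$ with $p_n > 0$ for every $n$, whose decay rate (e.g.\ geometric) will be pinned down at the end to ensure the desired tail bounds on $h$.

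For each $w \in \cA$, define $m_w$ to be the unique probability measure on $Y_w$ for which $(F^{|w|})_* m_w = m$. This is well-posed because the full-image property makes $F^{|w|}|_{Y_w} \colon Y_w \to Y$ a bijection; explicitly, $m_w$ has density $|(F^{|w|})'|$ with respect to $m|_{Y_w}$, and a change of variables gives $\int_{Y_w} |(F^{|w|})'| \, dm = 1$. Define $\bP_\cA(w) := p_{|w|} m(Y_w)$. Since $\{Y_w : |w|=n\}$ partitions $Y$ mod $m$-null sets, $\sum_{|w|=n} m(Y_w) = 1$, hence $\sum_w \bP_\cA(w) = \sum_n p_n = 1$; positivity is immediate. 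For the disintegration $m = \sum_w \bP_\cA(w) m_w$, the condition on Radon--Nikodym densities reduces to the pointwise identity
\[
  \sum_{n \geq 1} p_n \, m(Y_{w_n(x)}) |(F^n)'(x)| = 1 \quad \text{for $m$-a.e.\ } x \in Y,
\]
where $w_n(x)$ denotes the unique length-$n$ word with $x \in Y_{w_n(x)}$. By bounded distortion of $F^n$ on $Y_{w_n(x)}$, the factor $m(Y_{w_n(x)}) |(F^n)'(x)|$ is comparable to a constant uniformly in $x$ and $n$; I expect this to be the main technical step, and would handle it by a bounded multiplicative correction to the weights $\bP_\cA(w)$ that exactly compensates the distortion cocycle along $w$, preserving positivity and leaving the tail estimates below unaffected.

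For the tail bounds, observe that with $X \sim m$ and $N$ independent of $X$ with law $(p_n)$, $h(W)$ (for $W := w_N(X)$) has the law of the stopped Birkhoff sum $\sum_{j=0}^{N-1} \tau \circ F^j(X)$. For the map \eqref{eq:LSV}, combining $m(\tau \geq k) \lesssim k^{-\beta}$ with the comparability of $m$ and the $F$-invariant measure on $Y$, together with standard tail estimates for sums of random variables with polynomial tail, yields $\bP_\cA(h \geq k) \lesssim k^{-\beta}$ provided $(p_n)$ has enough moments (geometric decay more than suffices). For the map \eqref{eq:HG}, $\int h^\beta \, d\bP_\cA = \sum_n p_n \int \bigl(\sum_{j < n} \tau \circ F^j\bigr)^\beta \, dm$; a Marcinkiewicz--Zygmund-type inequality bounds the inner integral by $C n^\beta \int \tau^\beta \, dm < \infty$, and $\sum_n n^\beta p_n < \infty$ for geometric $p_n$ completes the estimate.
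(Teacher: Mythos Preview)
The paper does not prove this proposition; it attributes the disintegration to Zweim\"uller~\cite{Z09} (where it is called a \emph{regenerative partition of unity}) and the tail bounds on $h$ to~\cite{K17}. So there is no detailed argument to compare against, but your proposal has a genuine gap that the cited construction avoids.

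Once you require $(F^{|w|})_* m_w = m$, the measure $m_w$ is uniquely determined, and the disintegration $m = \sum_w \bP_\cA(w) m_w$ becomes the pointwise identity
\[
  1 \;=\; \sum_{n \geq 1} \bP_\cA\bigl(w_n(x)\bigr)\,\bigl|(F^n)'(x)\bigr|
  \qquad \text{for $m$-a.e.\ } x.
\]
With your choice $\bP_\cA(w) = p_{|w|} m(Y_w)$ the $n$-th summand is $p_n\, m(Y_{w_n(x)})\,|(F^n)'(x)|$, which by bounded distortion is only \emph{comparable} to $p_n$, not equal to it: the ratio depends on $x$ within $Y_{w_n(x)}$. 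Your proposed fix, a ``bounded multiplicative correction to the weights $\bP_\cA(w)$,'' cannot work, because a correction factor $c_w$ is a single constant per word, while the defect $m(Y_{w})\,|(F^{|w|})'(x)|$ is a genuinely nonconstant function of $x$ on $Y_w$. No choice of constants $c_w$ can make $\sum_n p_n\, c_{w_n(x)}\, m(Y_{w_n(x)})\,|(F^n)'(x)|$ identically~$1$.

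The actual construction in~\cite{Z09,K17} does not fix a length distribution $(p_n)$ in advance. Instead one iteratively peels off from $m$ the largest multiple of $m_a$ that is dominated by the remaining mass on each $a$, pushes the residual forward by $F$, and repeats. The weights $\bP_\cA(w)$ emerge from this greedy procedure and are \emph{not} of the product form $p_{|w|} m(Y_w)$; the distortion is absorbed at each step rather than corrected after the fact. This is the missing idea. Your tail arguments for $h$ are reasonable in spirit once the correct $\bP_\cA$ is in hand, though the details in~\cite{K17} exploit the specific iterative structure rather than generic moment inequalities.
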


The disintegration in Proposition~\ref{prop:nai} was introduced in
\cite{Z09} and called \emph{regenerative partition of unity}.
The bounds on the tail of $h$ are proved in \cite{K17}.
This disintegration is the basis of the Markov chain construction.

\subsection{Construction of the  Markov chain} \label{constructionMC}

Let \(g_0, g_1, \ldots\) be a Markov chain with state space
\[
  S = \{(w, \ell) \in \cA \times \bZ \colon 0 \leq \ell < h(w)\}
\]
and transition probabilities
\begin{equation}
  \label{eq:trpr}
  \begin{aligned}
  \bP (g_{n+1} = (w, \ell) & \mid g_n = (w', \ell'))
  \\ & = \begin{cases}
    1, & \ell = \ell' + 1 \text{ and } \ell' + 1 < h(w) \text{ and } w=w' \\
    \bP_\cA(w) , & \ell = 0 \text{ and } \ell' + 1 = h(w') \\
    0, & \text{else}
  \end{cases}
  \end{aligned}
\end{equation}
The Markov chain \(g_0,g_1, \ldots\) has a unique (hence ergodic) invariant probability measure $\nu$ on \(S\), given by 
\beq \label{defnu}
\nu ( w, \ell ) = \frac{ \bP_{\cA} (w) {\bf 1}_{\{0 \leq \ell < h(w) \}}}{  \sum_{ (w , \ell) \in S } \bP_{\cA} (w)  } =  \frac{\bP_{\cA} (w) {\bf 1}_{\{0 \leq \ell < h(w) \}} }{   \bE_{\cA} (h)  }  \, .
\eeq
The Markov chain  \(g_0,g_1, \ldots\) starting from $\nu$ defines a probability measure \(\bP_\Omega\) on
the space \(\Omega \subset S^{\bN}\) of sequences which correspond to
non-zero probability transitions.
Let \(\sigma \colon \Omega \to \Omega\) be the left shift action,
\[
  \sigma (g_0, g_1, \ldots) = (g_1, g_2, \ldots) \,   .
\]

\begin{rmk} \label{remarkonaperiodicity}
  There exists $w \in \cA$ with $\bP_\cA(w) > 0$ and $h(w) = 1$.
  Therefore, the Markov chain \(g_0, g_1, \ldots\) is aperiodic. Aperiodicity is used in the proof of
  the ASIP (namely, in the proof of Lemma \ref{lem:nngh} to apply Lindvall's result \cite{Li79}).
  However, in the general case, as far as the ASIP is concerned,  aperiodicity is not necessary (see Section~\ref{sec:NUE}).
\end{rmk}

We supply the space \(\Omega\) with a \emph{separation time}
\(s \colon \Omega \times \Omega \to \bN \cup \{\infty\}\),
measured in terms of the number of visits to
\(S_0 = \{(w,\ell) \in S \colon \ell = 0\}\) as follows.
For \(a,b \in \Omega\),
\begin{equation} \label{eq:septwop}
\begin{aligned}
  a & = (g_0, \ldots, g_N, g_{N+1}, \ldots), \\
  b & = (g_0, \ldots, g_N, g'_{N+1}, \ldots) 
\end{aligned}
\end{equation}
with \(g_{N+1} \neq g'_{N+1}\), we set
\[
  s(a,b)
  = \# \{ 0 \leq  n \leq N \colon g_n \in S_0 \}
  .
\]
We define a \emph{separation metric} \(d\) on \(\Omega\) by
\begin{equation} \label{eq:sepd}
  d(a,b) = \lambda^{-s(a,b)}
  .
\end{equation}

For \(g = (w, \ell) \in S\), define \(X_g \subset [0,1]\), 
\(X_g = f^\ell (Y_w)\).
Then, similar to Proposition~\ref{prop:anqw},
to each \((g_0, g_1, \ldots) \in \Omega\) there corresponds a unique
\(x \in [0,1]\) such that \(f^n(x) \in X_{g_n}\) for all \(n \geq 0\)
(but for a given \(x\), there may be many such
\((g_0, g_1, \ldots) \in \Omega\)).

Thus we introduce a projection \(\pi \colon \Omega \to [0,1]\),
with \(\pi(g_0, g_1, \ldots) = x\) where
\(f^n(x) \in X_{g_n}\) for all \(n \geq 0\) as above.

The key properties of the projection \(\pi\) are:
\begin{lemma}
  \label{lem:proj}
  \
  \begin{itemize}
     \item \(\pi\) is Lipschitz:
      \(|\pi(a) - \pi(b)| \leq  d(a,b)\) for all \(a,b \in \Omega\);
    \item \(\pi\) is a measure preserving map between the 
      probability spaces \((\Omega, \bP_\Omega)\)
      and \(([0,1], \mu)\);
    \item \(\pi\) is a semiconjugacy between \(\sigma \colon \Omega \to \Omega\)
      and \(f \colon [0,1] \to [0,1]\), i.e.\ the following diagram
      commutes:
      \[
        \begin{tikzcd}[row sep=2.5em]
          \Omega \arrow[r,"\sigma"] \arrow[d,swap,"\pi"] 
          &
          \Omega \arrow[d,swap,"\pi"] \\
          {[0,1]} \arrow[r,"f"] & {[0,1]}
        \end{tikzcd}
      \]
  \end{itemize}
\end{lemma}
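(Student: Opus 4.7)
My plan is to dispatch the three bullets in order of increasing subtlety: semiconjugacy, then Lipschitzness, then measure preservation, each flowing from the previous constructions.

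For the semiconjugacy, the defining property of $\pi$ gives it immediately. If $a=(g_0,g_1,\ldots)$ and $\pi(a)=x$, then $f^n(x)\in X_{g_n}$ for all $n\ge 0$, so $f^{n+1}(x)=f^n(f(x))\in X_{g_{n+1}}$ for all $n\ge 0$. Thus $f(x)$ meets the defining property of $\pi(\sigma(a))=\pi(g_1,g_2,\ldots)$, and the uniqueness of this projection (established just before the statement) forces $f(\pi(a))=\pi(\sigma(a))$.

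For the Lipschitz bound, fix $a,b\in\Omega$ as in \eqref{eq:septwop} and set $s=s(a,b)$. When $s=0$ the bound $|\pi(a)-\pi(b)|\le 1=d(a,b)$ is trivial since the projections lie in $[0,1]$. When $s\ge 1$, list the indices with $g_{n_i}\in S_0$ as $n_1<\cdots<n_s\le N$, and write $g_{n_i}=(w^{(i)},0)$. The key observation is that the Markov state $(w^{(i)},0)$ records the entire word $w^{(i)}$, not just one letter, so the agreement of $a$ and $b$ on $g_0,\ldots,g_N$ forces $f^{n_1}(\pi(a))$ and $f^{n_1}(\pi(b))$ into the common cylinder $Y_{w^{(1)}\cdots w^{(s)}}$ of $F$-depth $|w^{(1)}|+\cdots+|w^{(s)}|\ge s$. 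Uniform expansion of $F$ with rate $\lambda$ then bounds the diameter of this cylinder by $\lambda^{-s}\cdot\diam Y\le\lambda^{-s}$. Finally, both maps \eqref{eq:LSV} and \eqref{eq:HG} satisfy $|f'|\ge 1$ on $[0,1]$, so $f^{n_1}$ is non-contracting, and $|\pi(a)-\pi(b)|\le|f^{n_1}(\pi(a))-f^{n_1}(\pi(b))|\le\lambda^{-s}=d(a,b)$.

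For measure preservation, set $\bar\mu=\pi_*\bP_\Omega$. Stationarity of the chain makes $\bP_\Omega$ invariant under $\sigma$, and combined with the semiconjugacy already established this forces $\bar\mu$ to be $f$-invariant. Using the disintegration of Proposition~\ref{prop:nai} and the explicit formula \eqref{defnu} for $\nu$, one obtains the Young-tower expression
\[
\bar\mu \;=\; \frac{1}{\bE_\cA(h)}\sum_{(w,\ell)\in S} \bP_\cA(w)\,(f^\ell)_* m_w,
\]
which is absolutely continuous with respect to Lebesgue measure. Uniqueness of the absolutely continuous $f$-invariant probability measure $\mu$ then forces $\bar\mu=\mu$.

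The main technical obstacle is the bookkeeping for the Lipschitz step: one must convert ``$s$ visits to $S_0$'' into ``$F$-cylinder depth at least $s$,'' which hinges on the fact that the Markov state at each visit already commits to a whole word rather than a single letter. Pulling the estimate back from time $n_1$ to time zero depends essentially on the non-contraction $|f'|\ge 1$, a feature special to the intermittent maps at hand; for more general nonuniformly expanding systems (Section~\ref{sec:NUE}) an analogous bound would instead require distortion control along the orbit from $0$ to $n_1$.
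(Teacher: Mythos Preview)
Your semiconjugacy and Lipschitz arguments are correct and essentially match the paper's: the paper organizes the Lipschitz step by writing the common prefix as $(w_0,\ell_0),\ldots,(w_k,\ell_k)$ and bounding $\diam f^{\ell_0}(Y_{w_0\cdots w_k})$, distinguishing the cases $\ell_0=0$ and $\ell_0\neq 0$, but this is just a relabelling of your $n_1$ and $w^{(1)},\ldots,w^{(s)}$.

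For measure preservation your strategy differs from the paper's, and there is a gap. You assert the Young-tower formula
\[
  \pi_*\bP_\Omega \;=\; \frac{1}{\bE_\cA(h)}\sum_{(w,\ell)\in S}\bP_\cA(w)\,(f^\ell)_* m_w
\]
as a consequence of Proposition~\ref{prop:nai} and~\eqref{defnu}, but this is not immediate: it amounts to the claim that, conditioned on $g_0=(w,0)$, the law of $\pi$ is exactly $m_w$. That claim is the heart of the matter and requires a cylinder-by-cylinder verification. The paper carries this out (in slightly weaker form, showing $\pi_*\bP_{\Omega_0}=m$) in its Proposition~\ref{prop:pim}, checking that $\bP_{\Omega_0}(\pi^{-1}Y_w)=m(Y_w)$ for every $w\in\cA$ via the iterated disintegration $m=\sum_{w_0,\ldots,w_n}\bP_\cA(w_0)\cdots\bP_\cA(w_n)\,m_{w_0\cdots w_n}$ and then invoking Carath\'eodory. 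You would need the same computation, or a refinement of it, to justify your displayed formula.

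Once that gap is filled, your concluding step is a genuine alternative to the paper's: you deduce $\pi_*\bP_\Omega\ll\Leb$ and invoke uniqueness of the absolutely continuous $f$-invariant probability, whereas the paper argues that $\pi_*\bP_\Omega$ is $f$-invariant and ergodic, shows $\mu|_Y\ll (\pi_*\bP_\Omega)|_Y$ via Proposition~\ref{prop:pim}, and derives a contradiction with mutual singularity if $\pi_*\bP_\Omega\neq\mu$. Your route is slightly more direct but uses the stronger input (the full tower formula rather than just $\pi_*\bP_{\Omega_0}=m$).
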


\begin{cor} 
  \label{cor:proj}
  Suppose that $\varphi \colon [0,1] \to \bR$ is H\"older continuous.
  Let $\psi = \varphi \circ \pi$
  and $X_k = \psi(g_k, g_{k+1}, \ldots)$ for $k \geq 0$.
  Then
  \begin{enumerate}[label=(\alph*)]
    \item $\psi$ is H\"older continuous.
    \item The process $(X_k)_{k \geq 0}$ on the probability space
      $(\Omega, \bP_\Omega)$ is equal in law to
      $(\varphi \circ f^k)_{k \geq 0}$ on $([0,1], \mu)$.
  \end{enumerate} 
\end{cor}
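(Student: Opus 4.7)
The corollary has two parts and both are direct consequences of Lemma~\ref{lem:proj}, so I do not expect any real obstacle; the work is essentially bookkeeping.

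For part (a), the plan is to combine the Lipschitz bound on $\pi$ with the H\"older regularity of $\varphi$. If $\varphi$ has H\"older exponent $\eta \in (0,1]$ on $[0,1]$ (with the Euclidean metric), then for any $a,b \in \Omega$,
\[
  |\psi(a) - \psi(b)|
  = |\varphi(\pi(a)) - \varphi(\pi(b))|
  \leq |\varphi|_\eta \, |\pi(a) - \pi(b)|^\eta
  \leq |\varphi|_\eta \, d(a,b)^\eta,
\]
using the Lipschitz bound from Lemma~\ref{lem:proj} in the last step. Hence $\psi$ is H\"older continuous on $(\Omega,d)$ with exponent $\eta$ and $\|\psi\|_\eta \leq \|\varphi\|_\eta$.

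For part (b), the plan is to iterate the semiconjugacy $\pi \circ \sigma = f \circ \pi$ to obtain $\pi \circ \sigma^k = f^k \circ \pi$ for every $k \geq 0$, and then read off the identity
\[
  X_k(\omega)
  = \psi(\sigma^k \omega)
  = \varphi(\pi(\sigma^k \omega))
  = \varphi(f^k(\pi(\omega)))
  = (\varphi \circ f^k)(\pi(\omega))
  \qquad \text{for all } \omega \in \Omega.
\]
In other words, the entire path $\omega \mapsto (X_k(\omega))_{k \geq 0}$ factors as $\omega \mapsto \pi(\omega) \mapsto (\varphi \circ f^k(\pi(\omega)))_{k \geq 0}$.

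Finally, the measure-preserving property in Lemma~\ref{lem:proj} says that $\pi_* \bP_\Omega = \mu$. Hence for every $n \geq 1$ and every bounded measurable $F \colon \bR^n \to \bR$,
\[
  \int F(X_0,\ldots,X_{n-1}) \, d\bP_\Omega
  = \int F(\varphi \circ \pi, \ldots, \varphi \circ f^{n-1} \circ \pi) \, d\bP_\Omega
  = \int F(\varphi, \ldots, \varphi \circ f^{n-1}) \, d\mu,
\]
so the finite-dimensional distributions of $(X_k)_{k \geq 0}$ under $\bP_\Omega$ coincide with those of $(\varphi \circ f^k)_{k \geq 0}$ under $\mu$, which is exactly the claimed equality in law.
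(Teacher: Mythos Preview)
Your proof is correct and follows exactly the approach the paper intends: the corollary is stated immediately after Lemma~\ref{lem:proj} without a separate proof, precisely because both parts are the straightforward consequences you wrote out (Lipschitz $\pi$ composed with H\"older $\varphi$ for (a), and the semiconjugacy plus $\pi_*\bP_\Omega=\mu$ for (b)).
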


\subsection{Proof of Lemma~\ref{lem:proj}}

The last item, namely, the property that \(\pi \circ \sigma = f \circ \pi\) follows directly
from the construction of \(\sigma\) and \(\pi\).

\smallskip

We prove now the first item.
Suppose that \(a,b \in \Omega\) are as in \eqref{eq:septwop} and write
\begin{align*}
  g_0, & \ldots, g_N = \\
  & (w_0, \ell_0), \ldots, (w_0, h(w_0) - 1), \ 
  (w_1, 0), \ldots, (w_1, h(w_1) - 1),  \ldots,
  (w_k, 0), \ldots, (w_k, \ell_k) \, , 
\end{align*}
where $0 \leq \ell_0 <  h(w_0)$, $0 \leq \ell_k < h(w_k)$ and 
$h(w_0) -\ell_0 +\sum_{i=1}^{k-1} h(w_i) + \ell_k =N$. 
Then both \(\pi(a)\) and \(\pi(b)\) belong to
\(f^{\ell_0}(Y_{w_0 \cdots w_k})\). 

Suppose that \(\ell_0 \neq 0\). Then \(s(a,b) = k\).
Since \(|f'| \geq 1\) and \(|F'| \geq \lambda\), 
\[
  \diam f^{\ell_0}(Y_{w_0 \cdots w_k}) 
  \leq \diam Y_{w_1 \cdots w_k}
  \leq \lambda^{-k}
  .
\]
Then \(|\pi(a) - \pi(b)| \leq \lambda^{-s(a,b)} = d(a,b)\).
If \(\ell_0 = 0\), then \(s(a,b)=k+1\) and
\(\diam f^{\ell_0}(Y_{w_0 \cdots w_k}) \leq \lambda^{-(k+1)}\).
Again, \(|\pi(a) - \pi(b)| \leq d(a,b)\), as required.

\medskip
  
 It remains to prove the second item, namely:   \(\pi_* \bP_\Omega = \mu\).  Let \(\Omega_0 = \{(g_0, g_1, \ldots) \in \Omega \colon g_0 \in S_0\}\).
Then \(\bP_\Omega (\Omega_0) > 0\). Let 
\[
  \bP_{\Omega_0} (\cdot)
  = \frac{\bP_\Omega( \cdot \cap \Omega_0)}{\bP_\Omega (\Omega_0)}
\]
be the corresponding conditional probability measure. We shall use the following intermediate result whose proof is given later. 

\begin{prop}
  \label{prop:pim}
  \(\pi_* \bP_{\Omega_0} = m\).
\end{prop}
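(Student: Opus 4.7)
\medskip

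\noindent\textbf{Plan for the proof of Proposition~\ref{prop:pim}.}

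The plan is to show that under $\bP_{\Omega_0}$ the successive ``excursion words'' of $(g_n)_{n \geq 0}$ between visits to $S_0$ form an i.i.d.\ sequence in $\cA$ with common law $\bP_\cA$, and then to identify $\pi$ with the map that reconstructs a point of $Y$ from such a word sequence via Proposition~\ref{prop:anqw}. First I would unpack the conditional law: from formula \eqref{defnu} we have $\nu(S_0) = 1/\bE_\cA(h)$ and $\bP(g_0 = (w,0) \mid g_0 \in S_0) = \bP_\cA(w)$. Combined with the transition rule \eqref{eq:trpr}, which is deterministic away from $S_0$ and independent (with law $\bP_\cA$) at each return to $S_0$, this immediately yields that the sequence of words $w_0, w_1, \ldots$ defined by the successive return excursions is i.i.d.\ with distribution $\bP_\cA$.

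Next I would relate the projection $\pi$ to the reconstruction map of Proposition~\ref{prop:anqw}. Since $X_{(w_0,0)} = Y_{w_0}$, we have $\pi(a) \in Y_{w_0}$; and because $h(w_0)$ is exactly the time at which the chain returns to $S_0$ in state $(w_1, 0)$, while $f^{h(w)}$ agrees with $F^{|w|}$ on $Y_w$, we inductively get $F^{|w_0|+\cdots+|w_{n-1}|}(\pi(a)) \in Y_{w_n}$ for every $n$. By Proposition~\ref{prop:anqw}, $\pi(a)$ is the unique such point, so $\pi$ factors as $\pi = \Phi \circ \iota$, where $\iota(a) = (w_0(a), w_1(a), \ldots)$ is measurable and $\Phi \colon \cA^\bN \to Y$ is the map furnished by Proposition~\ref{prop:anqw}.

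It then remains to identify $\Phi_*(\bP_\cA^{\otimes \bN})$ with $m$. Using the disintegration $m = \sum_w \bP_\cA(w)\, m_w$ and the identity $(F^{|w|})_* m_w = m$ from Proposition~\ref{prop:nai}, a one-line induction gives
\[
  m\bigl( Y_{w_0} \cap F^{-|w_0|}(Y_{w_1}) \cap \cdots \cap F^{-(|w_0|+\cdots+|w_{n-1}|)}(Y_{w_n}) \bigr)
  = \prod_{i=0}^{n} \bP_\cA(w_i),
\]
which is precisely the mass $\bP_\cA^{\otimes \bN}$ assigns to the cylinder $\{w_0\} \times \cdots \times \{w_n\} \times \cA^\bN$. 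Since these word cylinders generate the Borel $\sigma$-algebra on $Y$ (their diameters shrink to $0$ by the expansion property $|F'| \geq \lambda$), the two measures coincide, giving $\Phi_*(\bP_\cA^{\otimes \bN}) = m$ and hence $\pi_* \bP_{\Omega_0} = m$.

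The only delicate point is the bookkeeping in the first step, namely checking rigorously from \eqref{eq:trpr} that the return-time decomposition of the stationary Markov chain conditioned on $g_0 \in S_0$ really does produce an i.i.d.\ word sequence; everything else is a direct application of Propositions~\ref{prop:anqw} and~\ref{prop:nai}. I would write the argument in the order outlined above, keeping the measure-theoretic identification of cylinders explicit.
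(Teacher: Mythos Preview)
Your overall strategy---factoring $\pi$ on $\Omega_0$ as $\Phi \circ \iota$, identifying $\iota_*\bP_{\Omega_0}$ with $\bP_\cA^{\otimes\bN}$, and then checking $\Phi_*(\bP_\cA^{\otimes\bN}) = m$---is sound and matches the paper's argument in spirit. However, the displayed identity
\[
  m\bigl( Y_{w_0} \cap F^{-|w_0|}(Y_{w_1}) \cap \cdots \cap F^{-(|w_0|+\cdots+|w_{n-1}|)}(Y_{w_n}) \bigr)
  = \prod_{i=0}^{n} \bP_\cA(w_i)
\]
is false, and the ``one-line induction'' does not go through. Already the base case $m(Y_{w_0}) = \bP_\cA(w_0)$ fails: Proposition~\ref{prop:nai} does \emph{not} assert that $\bP_\cA(w) = m(Y_w)$, and in general many words $w' \in \cA$ (e.g.\ all extensions of $w_0$) have $Y_{w'} \subset Y_{w_0}$, each contributing $\bP_\cA(w') m_{w'}(Y_{w_0}) = \bP_\cA(w')$ to $m(Y_{w_0})$. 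The underlying reason is that $\Phi \colon \cA^\bN \to Y$ is \emph{not injective}: the decomposition of an $\alpha$-itinerary into $\cA$-words is far from unique, so the preimage $\Phi^{-1}(Y_{w_0 \cdots w_n})$ is strictly larger than the cylinder $\{w_0\} \times \cdots \times \{w_n\} \times \cA^\bN$, and you cannot match the two measures via that cylinder.

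The fix is to compare $\Phi_*(\bP_\cA^{\otimes\bN})$ and $m$ on the sets $Y_w$, $w \in \cA$, directly. For $|w| = n+1$, iterating the disintegration $n$ times gives $m = \sum_{w_0,\ldots,w_n} \bP_\cA(w_0)\cdots\bP_\cA(w_n)\, m_{w_0\cdots w_n}$; since $|w_0\cdots w_n| \geq n+1 = |w|$, each $Y_{w_0\cdots w_n}$ is either contained in $Y_w$ or disjoint from it, whence
\[
  m(Y_w) = \sum_{\substack{w_0,\ldots,w_n \in \cA \colon \\ Y_{w_0\cdots w_n} \subset Y_w}} \bP_\cA(w_0)\cdots\bP_\cA(w_n).
\]
The same sum computes $\bP_\cA^{\otimes\bN}(\Phi^{-1}(Y_w))$, because $\Phi(v_0,v_1,\ldots) \in Y_w$ if and only if $Y_{v_0\cdots v_n} \subset Y_w$. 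This is precisely the route the paper takes.
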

Let us complete the proof of the second item with the help of this proposition. 
  Note that \(\sigma \colon \Omega \to \Omega\)
  preserves the ergodic probability measure
  \(\bP_\Omega\). Since \(f \circ \pi = \pi \circ \sigma\),
  the measure \(\upsilon := \pi_* \bP_\Omega\) on \([0,1]\) is
  \(f\)-invariant and ergodic, as is \(\mu\).
  
  Suppose that \(\upsilon\) and \(\mu\) are different measures.
  Since they are both \(f\)-invariant and ergodic,
  they are singular with respect to each other:  
  there exists \(A \subset [0,1]\) such that
  \(\mu(A) = 1\) and \(\upsilon(A) = 0\).

  Let \(\upsilon|_Y\) and \(\mu|_Y\) denote the restrictions on \(Y\).
  By Proposition~\ref{prop:pim}, \(m \ll \upsilon|_Y\).
  Since in turn \(\mu|_Y \ll m\), it follows that
  \(\mu|_Y \ll \upsilon|_Y\).
  Hence 
  \(
    \mu(A \cap Y) 
    = \upsilon(A \cap Y)
    = 0
  \).
  Also, \(\mu(Y \setminus A) = 0\), so \(\mu(Y) = 0\),
  which contradicts  the fact that \(\mu\) is
  equivalent to the Lebesgue measure on \([0,1]\).
  Thus \(\mu = \upsilon\).  
  
  \smallskip
  
To end the proof of the second item, it remains to show Proposition~\ref{prop:pim}. 

\medskip

\noindent{\it Proof of Proposition~\ref{prop:pim}.}
  Our strategy is to show that for each \(w \in \cA\),
  \[
    \bP_{\Omega_0} (\pi^{-1}(Y_w))
    = m(Y_w)
   \,  .
  \]
  Then the result follows from  Carath\'eodory's extension theorem.
  
  Let \(m = \sum_{w \in \cA} \bP_\cA(w) m_w\) be the decomposition
  from Proposition~\ref{prop:nai}. Recall that each
  \(m_w\) is supported on \(Y_w\) and \((F^{|w|})_* m_w = m\).
  Since \(F^{|w|} \colon Y_w \to Y\) is a diffeomorphism between
  two intervals, the measures \(m_w\) are uniquely determined by these
  properties. It is straightforward to write
  \(m_w = \sum_{w' \in \cA} \bP_\cA(w') m_{w w'}\)
  for each \(w\). (Here \(w w'\) is the concatenation of \(w, w'\) and the
  measures \(m_{w w'}\) are from the same decomposition.)
  Thus we obtain a decomposition
  \[
    m = \sum_{w, w' \in \cA} \bP_\cA(w) \bP_\cA(w') m_{w w'}
  \,   .
  \]
  Further, for \(n \geq 0\), we write
  \[
    m = \sum_{w_0, \ldots, w_n \in \cA} \bP_\cA(w_0) \cdots \bP_\cA(w_n)
    m_{w_0 \cdots w_n}
  \,   .
  \]
  
  Suppose that \(w \in \cA\) with \(|w|=n+1\). For every \(w_0, \ldots, w_n \in \cA\),
  either \(Y_{w_0 \cdots w_n} \subset Y_w\) (when the word \(w_0 \cdots w_n\)
  starts with \(w\)) or
  \(Y_{w_0 \cdots w_n} \cap Y_w = \emptyset\) (otherwise).
  Hence
  \begin{equation}
    \label{eq:rrt1}
    m(Y_w)
    = \sum_{\substack{
        w_0, \ldots, w_n \in \cA \colon \\ 
        Y_{w_0 \cdots w_n} \subset Y_w
      }}
      \bP_\cA(w_0) \cdots \bP_\cA(w_n)
  \,   .
  \end{equation}
  
  For \(w_0, \ldots, w_n\), let
  \(\Omega_{w_0, \ldots, w_n}\)
  denote the subset of \(\Omega_0\) with the first coordinates
  \[
    (w_0,0), \ldots, (w_0, h(w_0) - 1),
    \ldots,
    (w_n,0), \ldots, (w_n, h(w_n) - 1)
 \,    .
  \]
  Note that \(\pi(\Omega_{w_0, \ldots, w_n}) = Y_{w_0 \cdots w_n}\)
  and by~\eqref{eq:trpr},
  \[
    \bP_{\Omega_0} (\Omega_{w_0, \ldots, w_n})
     = \bP_\cA(w_0) \cdots \bP_\cA(w_n)
  \,    .
  \]
  Then
  \begin{equation}
    \label{eq:rrt2}
    \bP_{\Omega_0}(\pi^{-1} (Y_w))
    = \sum_{\substack{
        w_0, \ldots, w_n \in \cA \colon \\ 
        Y_{w_0 \cdots w_n} \subset Y_w
      }}
      \bP_{\Omega_0} (\Omega_{w_0, \ldots, w_n})
    = \sum_{\substack{
        w_0, \ldots, w_n \in \cA \colon \\ 
        Y_{w_0 \cdots w_n} \subset Y_w
      }}
      \bP_\cA(w_0) \cdots \bP_\cA(w_n)
  \,   .
  \end{equation}
  
  Combining~\eqref{eq:rrt1} and~\eqref{eq:rrt2}, we obtain that
  \(\bP_{\Omega_0}(\pi^{-1} (Y_w)) = m(Y_w)\), as required. \qed

\section{Meeting time}
\label{sec:MT}

In Section~\ref{sec:RMC} we constructed the stationary and aperiodic Markov chain $(g_n)_{n \geq 0}$.
In this section we introduce a meeting time on it and
use it to prove a number of statements which shall play
a central role in the proof of the ASIP.


We work with the notation of Section~\ref{sec:RMC}.
Without changing the distribution, we redefine the Markov chain \(g_0,g_1, \ldots\) on a new
probability space as follows. Let \(g_0 \in S\) be distributed according to \(\nu\) 
(the stationary distribution defined by \eqref{defnu}).
Let \(\eps_1, \eps_2, \ldots\) be a sequence of independent
identically distributed random variables with values in \(\cA\), distribution
\(\bP_\cA\), independent from \(g_0\). For \(n \geq 0\) let
\beq \label{defgnviaU}
  g_{n+1} 
  = U(g_n, \eps_{n+1}) \, , 
\eeq
where
\beq \label{defUforMC} 
  U((w, \ell), \eps)
  = \begin{cases}
    (w, \ell+1), & \ell < h(w) - 1 \, ,  \\
    (\eps, 0), & \ell = h(w) - 1 \, .
  \end{cases}
\eeq
We refer to \((\eps_n)_{n \geq 1}\) as \emph{innovations}. 

Let \(g_0^*\) be a random variable in \(S\) with distribution $\nu$,
independent from \(g_0\) and \((\eps_n)_{n \geq 1}\).
Let \(g_0^*, g_1^*, g_2^*, \ldots\) be a Markov chain given by
\beq \label{defMCstar}
  g_{n+1}^* = U(g_n^*, \eps_{n+1})
  \ \text{ for } \ n \geq 0 \, .
\eeq
Thus the chains \( (g_n)_{n \geq 0}\) and \(( g_n^*)_{n \geq 0}\)
have independent initial states, but share the same innovations.
Define the meeting time:
\beq \label{defofT}
  T = \inf \{n \geq 0 \colon g_n = g_n^* \} \, .
\eeq

For $\beta, \eta > 1$, define $\psi_{\beta, \eta}, \tpsi_{\beta, \eta} \colon [0,\infty) \to [0,\infty)$,
\[
  \psi_{\beta, \eta} (x) = x^{\beta} ( \log ( 1+x) )^{- \eta}
  \, ,
  \qquad
  \tpsi_{\beta, \eta}(x) = x^{\beta - 1} ( \log ( 1+x) )^{- \eta}
\]
for $x > 0$ and $\psi_{\beta, \eta}(0) = \tpsi_{\beta, \eta}(0) = 0$.

For the maps~\eqref{eq:LSV} and~\eqref{eq:HG}, moments of $T$ can be estimated by
Proposition~\ref{prop:nai} and the following lemma:

\begin{lemma} \label{lem:nngh}
  Suppose that $\beta > 1$.
  \begin{enumerate}[label=(\alph*)]
    \item\label{lem:nngh:w} If \(\bP_\cA(h \geq k) \ll k^{-\beta}\), then
      \(\bE (\tpsi_{\beta, \eta} (T) ) < \infty\)
      for all $\eta >1$.
    \item\label{lem:nngh:s} If \(\int h^\beta \, d\bP_\cA < \infty\), then
      $\bE ( T^{\beta -1} ) < \infty$.
  \end{enumerate}
\end{lemma}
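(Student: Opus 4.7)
The plan is to recognise $T$ as the first simultaneous renewal time of two delayed renewal processes coupled through shared innovations, and then to invoke a renewal-coupling moment bound in the spirit of Lindvall~\cite{Li79}. For each chain, introduce the successive visit times to $S_0$: let $R_0 < R_1 < \cdots$ be the integers $n \geq 0$ with $g_n \in S_0$, and $R_0^* < R_1^* < \cdots$ similarly for $(g_n^*)$. From \eqref{defUforMC} one reads off $R_0 = h(w_0) - \ell_0$ when $g_0 = (w_0, \ell_0)$, and for $k \geq 1$ the increment $R_k - R_{k-1}$ equals $h(W_k)$ with $W_k := \eps_{R_{k-1}}$. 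Since each $R_{k-1}$ is a stopping time for the filtration generated by $g_0$ and $(\eps_n)_{n \geq 1}$, the strong Markov property ensures that $(W_k)_{k \geq 1}$ are i.i.d.\ with law $\bP_\cA$, and hence $(R_k - R_{k-1})_{k \geq 1}$ are i.i.d.\ with law $h_* \bP_\cA$; the same holds for the starred chain, built from the same underlying innovations.

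The key observation is that, for $n \geq 1$, one has $g_n = g_n^*$ if and only if $n$ is a simultaneous renewal, i.e.\ $n \in \{R_k\}_{k \geq 0} \cap \{R_k^*\}_{k \geq 0}$. Indeed, a simultaneous renewal yields $g_n = (\eps_n, 0) = g_n^*$; conversely, because the update \eqref{defgnviaU} is deterministic given the previous state and the next innovation, a meeting $g_n = g_n^*$ starting from $g_{n-1} \neq g_{n-1}^*$ forces both $g_{n-1}$ and $g_{n-1}^*$ to sit at the top of their respective blocks, which is precisely a simultaneous renewal at time $n$. (The case $g_0 = g_0^*$, which happens with positive probability, only contributes $T = 0$ and is harmless.) Thus $T$ is exactly the coupling time of two discrete renewal processes driven by common innovations.

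Having made this reduction, I would apply Lindvall's moment bound for the coupling of aperiodic discrete renewal processes; aperiodicity holds by Remark~\ref{remarkonaperiodicity}. For part~\ref{lem:nngh:s}, the assumption $\int h^\beta \, d\bP_\cA < \infty$ gives $\bE (h(W_1))^\beta < \infty$, and Lindvall's theorem supplies $\bE T^{\beta-1} < \infty$. For part~\ref{lem:nngh:w}, the tail bound $\bP_\cA(h \geq k) \ll k^{-\beta}$ yields $\bE \psi_{\beta, \eta}(h(W_1)) < \infty$ for every $\eta > 1$, and the corresponding refinement of Lindvall's estimate, which preserves the slowly varying factor while losing one power, delivers $\bE \tpsi_{\beta, \eta}(T) < \infty$. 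The contribution of the initial delay $R_0$ is absorbed in the same bounds: under $\nu$, the law of $R_0$ is size-biased, $\bP_\nu(R_0 = k) = \bP_\cA(h \geq k)/\bE_\cA h$, whose $(\beta-1)$-th moment (respectively $\tpsi_{\beta,\eta}$-moment) is controlled by $\bE_\cA h^\beta$ (respectively $\bE_\cA \psi_{\beta, \eta}(h)$).

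The main obstacle I foresee is the logarithmic refinement required for part~\ref{lem:nngh:w}: Lindvall's original statement handles plain polynomial moments, so the slowly varying factor $(\log(1+x))^{-\eta}$ must be tracked carefully. Either one revisits Lindvall's inductive estimate on the coupling event and records the extra log factor step by step, or one appeals to a ready-made refined renewal-coupling bound in the style used in the companion works \cite{BLW14,CDM17}.
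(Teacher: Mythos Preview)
Your reduction is the paper's: recognise $T$ as a renewal-coupling time and invoke Lindvall~\cite{Li79}. The paper phrases it via the ceiling $S_c = \{(w,\ell) : \ell = h(w)-1\}$ and the time $T^* = \inf\{n \geq 0 : g_n, g_n^* \in S_c\}$, noting $T \leq T^*+1$; your floor version with $S_0$ is equivalent.

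The one genuine gap is the phrase ``driven by common innovations'' followed by a direct appeal to Lindvall. Lindvall's bound is stated for \emph{independent} delayed renewal processes, and your two renewal sequences share the innovation stream $(\eps_n)_{n \geq 1}$, so you must justify why the bound still applies. This is precisely the step the paper isolates: each chain reads $\eps_n$ only when it sits at the ceiling at time $n-1$, and before $T^*$ the two chains are never at the ceiling simultaneously, so they consume \emph{disjoint} subsequences of $(\eps_n)$. The paper then introduces a copy $(g_n')$ built from a fresh independent stream $(\eps_n')$ and observes that the resulting coupling time $T'$ equals $T^*$ in law; Lindvall then legitimately applies to $T'$. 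Your write-up skips this bridge entirely, and without it the invocation of Lindvall is unjustified.

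For the logarithmic correction in part~\ref{lem:nngh:w}, the paper dispatches the obstacle you flag by citing~\cite[Prop.~9.6]{Ri00}, which covers concave moment functions such as $\tpsi_{\beta,\eta}$ directly and spares you from reworking Lindvall's estimate by hand. Your treatment of the initial delay via the size-biased law of $R_0$ matches the paper's computation of $\bE(\tpsi_{\beta,\eta}(\tau_0))$.
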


\begin{proof}
  Let \(S_c = \{(w,\ell) \in S \colon \ell = h(w) - 1\}\) be the ``ceiling'' of $S$ and
  \[
    T^* = \inf \{n \geq 0 \colon g_n \in S_c \text{ and } g_n^* \in S_c  \}
    \, .
  \]
  From the representation \eqref{defgnviaU}, it is clear that
  $T \leq T^* + 1$.
  
  Now, the segments $(g_0, g_1, \ldots, g_{T^*} )$ and $(g^*_0, g^*_1, \ldots, g^*_{T^*} )$ 
  never use the same innovations and behave independently. In addition,
  $g_{T^*+1}= g^*_{T^*+1} = (\eps_{T^* +1},0)$ and 
  $g_{n + T^* } = g^*_{n + T^* } $ for any $n \geq 1$. 

  Consider $(\eps'_n)_{n \geq 1}$, an independent copy of $(\eps_n)_{n \geq 1}$, 
  independent also from $g_0$. Let \(g_0'\) be a random variable in \(S\) with distribution $\nu$, 
  independent from $(g_0, (\eps_n)_{n \geq 1}, (\eps'_n)_{n \geq 1})$.
  Define the Markov chain $(g'_n )_{n \geq 0}$ by
  \[
    g_{n+1}'= U(g_n', \eps'_{n+1})
    \ \text{ for } \ n \geq 0
   \, .
  \]
  Let
  \[
    T' = \inf \{n \geq 0 \colon g_n \in S_c \text{ and } g_n' \in S_c  \} \, .
  \]
  Due to the previous considerations, $T'$ is equal to $T^*$ in law.
  
  Note that $S_c$ is a recurrent atom for the Markov chain $(g_n)_{n \geq 0}$. Let
  \[
    \tau_0 = \inf \{ n \geq 0 \colon g_n \in S_c \}
  \]
  be the first renewal time. If \(\bP_\cA(h \geq k) \ll k^{-\beta}\), we claim that for all $\eta >1$,
  \[
    \bE  (\tpsi_{\beta, \eta} (\tau_0) ) < \infty 
    \, .
  \]
  Then, according to Lindvall \cite{Li79} (see also Rio \cite[Prop.~9.6]{Ri00}),
  since the chain $(g_n)_{n \geq 0}$ is aperiodic (see Remark \ref {remarkonaperiodicity}),
  $\bE  (\tpsi_{\beta, \eta} (T') ) < \infty$ and~\ref{lem:nngh:w} follows.
  For~\ref{lem:nngh:s}, the argument is similar, with 
  $x^{\beta}$ instead of $\psi_{\beta, \eta} (x)$ and
  $x^{\beta-1}$ instead of $\tpsi_{\beta, \eta} (x)$.

  It remains to verify the claim.
  Note that if $g_0 = (w, \ell)$, then $\tau_0 = h(w) - \ell - 1$ and
  \[
    \tpsi_{\beta, \eta} (\tau_0)
    = \frac{(h(w) - \ell - 1)^{\beta - 1}}{(\log (h(w) - \ell))^{  \eta}}
    \leq C_{\beta, \eta} \frac{h(w)^{\beta - 1}}{(\log h(w))^{  \eta}}
    .
  \]
  For any $\eta >1$, using that $\nu(w, \ell) \leq \bP_\cA(w)/ \bE_\cA (h) $, write
  \begin{align*}
    \bE  (\tpsi_{\beta, \eta} (\tau_0) )
    & = \sum_{\substack{w \in \cA, \\ 0 \leq \ell < h(w) }} 
      \bE_{g_0 = (w, \ell)}  (\tpsi_{\beta, \eta} (\tau_0) ) \nu (w, \ell) 
    \\ & \leq C_{\beta, \eta}  (\bE_\cA (h))^{-1}\sum_{w \in \cA} 
      \frac{h(w)^{\beta}}{(\log h(w))^{ \eta}} \bP_\cA(w)
    = C_{\beta, \eta} (\bE_\cA (h))^{-1} \bE_\cA (\psi_{\beta, \eta} (h))
    < \infty  \, , 
  \end{align*}
by taking into account Proposition  \ref{prop:nai}. 
\end{proof}

Let $\psi \colon \Omega \to \bR$ be a H\"older continuous observable
with $\int \psi\, d\bP_\Omega = 0$. (Such as $\psi = \varphi \circ \pi$
in Section~\ref{sec:RMC}.)
For $\ell \geq 0$, define $\delta_\ell \colon \Omega \to \bR$,
\[
  \delta_\ell (g_0, g_1, \ldots)
  = \sup \bigl| \psi(g_0, g_1, \ldots, g_{\ell+1}, g_{\ell+2}, \ldots) - 
  \psi(g_0, g_1, \ldots, \tg_{\ell+1}, \tg_{\ell+2}, \ldots) \bigr|
  \, ,
\]
where the supremum is taken over all possible trajectories
$(\tg_{\ell + 1}, \tg_{\ell+2}, \ldots)$.

\begin{prop}
  \label{prop:boundofthedelta1first}
  Assume that $\bE(T) < \infty$. For all $r \geq 1$,
  \[
    \bE (\delta_\ell)
    \ll \ell^{-r/2} + \bP (T \geq [\ell/r])
    \, .
  \]
\end{prop}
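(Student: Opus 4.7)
My strategy is to combine the H\"older property of $\psi$ in the separation metric $d$ with the coupling $(g_n, g_n^*)_{n\ge 0}$ introduced above. Since $\psi = \varphi \circ \pi$ and $\pi$ is Lipschitz in $d$ by Lemma~\ref{lem:proj}, $\psi$ inherits the H\"older exponent $\eta$ of $\varphi$. From the definition \eqref{eq:sepd} of $d$, any two trajectories that agree on coordinates $g_0, \ldots, g_\ell$ have separation time at least $N_\ell := \#\{0 \leq n \leq \ell : g_n \in S_0\}$, the number of regeneration epochs seen in that window. Taking the supremum over the perturbations $(\tg_{\ell+1}, \tg_{\ell+2}, \ldots)$ in the definition of $\delta_\ell$ therefore yields the pointwise bound $\delta_\ell \leq C \lambda^{-\eta N_\ell}$, with $C$ depending only on $\varphi$.

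To control $\bE(\lambda^{-\eta N_\ell})$, I would exploit the fact that $g_n = g_n^*$ for every $n \geq T$, which on the event $\{T < \lfloor \ell/r \rfloor\}$ gives
\[
  N_\ell \geq M := \#\{\lfloor \ell/r \rfloor \leq n \leq \ell : g_n^* \in S_0\}.
\]
Splitting according to whether $T$ exceeds $\lfloor \ell/r\rfloor$ and using $\lambda^{-\eta N_\ell} \leq 1$ on the complement,
\[
  \bE(\delta_\ell) \leq C \bP\bigl(T \geq \lfloor \ell/r\rfloor\bigr) + C\, \bE(\lambda^{-\eta M}).
\]
This already has the desired form, modulo the estimate $\bE(\lambda^{-\eta M}) \ll \ell^{-r/2}$.

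For the remaining bound, I would choose $K$ of order $r \log \ell$ so that $\lambda^{-\eta K} \leq \ell^{-r/2}$ and write $\bE(\lambda^{-\eta M}) \leq \lambda^{-\eta K} + \bP(M \leq K)$. By stationarity of $(g_n^*)$, $M$ has the law of the number of visits to $S_0$ of the stationary chain on a window of length of order $\ell(1-1/r)$, and the successive inter-visit gaps are i.i.d.\ with distribution $h$ under $\bP_\cA$. Hence $\{M \leq K\}$ forces a sum of $K$ i.i.d.\ copies of $h$ to exceed a quantity of order $\ell$, and a Markov inequality with a suitable polynomial moment of $h$ (supplied by Proposition~\ref{prop:nai}) produces $\bP(M \leq K) \ll \ell^{-r/2}$, completing the argument.

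The main obstacle I anticipate is this last step: converting $K = O(\log\ell)$ into decay of order $\ell^{-r/2}$ for $\bP(M \leq K)$ forces the use of a polynomial moment of $h$ of order comparable to $r/2$, which must be extracted from the tail estimates in Proposition~\ref{prop:nai} and is tightly linked with the assumption $\bE(T) < \infty$ via Lemma~\ref{lem:nngh}.
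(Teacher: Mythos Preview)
Your reduction down to $\bE(\delta_\ell)\ll \bP(T\ge\lfloor\ell/r\rfloor)+\lambda^{-\eta K}+\bP(M\le K)$ is fine, but the obstacle you anticipate in the last step is fatal for this route. The variable $M$ has the law of the visit count of the stationary chain to $S_0$ over a window of length $\ell'=\ell-\lfloor\ell/r\rfloor$, and the event $\{M\le K\}$ contains the event that the initial forward-recurrence time $R_0$ (the gap before the first visit) exceeds $\ell'$. Under the stationary law $\nu$ one has $\bP(R_0>\ell')=\bE_{\cA}\bigl((h-\ell')_+\bigr)/\bE_{\cA}(h)$, which for $h$ with a finite $q$-th moment is of order $\ell^{-(q-1)}$ and no better. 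Proposition~\ref{prop:nai} supplies at most a moment of order $\beta$ for $h$, so the best your Markov argument can deliver is $\bP(M\le K)\ll \ell^{-(\beta-1)+\eps}$; this is \emph{not} $\ll\ell^{-r/2}$ once $r>2(\beta-1)$, and that is exactly the range of $r$ needed downstream (see the choice $r>2(\beta-1)$ right after~\eqref{boundofthedelta1first}). More basically, the proposition assumes only $\bE(T)<\infty$, which does not by itself guarantee any moment of $h$ beyond the first, so your argument appeals to information outside the stated hypotheses. Note too that passing to $(g_n^*)$ buys nothing here: since $M$ has the same law as the visit count of $(g_n)$ on a window of length $\ell'$, you have reduced $\bE(\lambda^{-\eta N_\ell})$ to the same quantity on a comparable window.

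The paper's argument diverges after the shared first step $\delta_\ell\le C\theta^{s_\ell}$. Rather than a renewal/moment estimate, it splits on $\{s_\ell\ge\tfrac12(\ell+1)\nu(S_0)\}$ and treats the complementary probability as a large-deviation bound for the bounded stationary sequence $(\mathbf{1}_{\{g_i\in S_0\}})_{i\ge 0}$. The coupling enters only through the inequality $\alpha(n)\le\beta(n)\le\bP(T\ge n)$, after which the Fuk--Nagaev inequality of Rio~\cite[Thm.~6.2]{Ri00} yields directly $c_1\ell^{-r/2}+c_2\,\bP(T\ge[\ell/r])$, using nothing beyond summability of the mixing coefficients, i.e.\ $\bE(T)<\infty$.
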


\begin{proof}
  By \eqref{eq:sepd} and the first item of Lemma \ref{lem:proj},
  there exist $C>0$ (depending on the H\"older norm of $\psi$)
  and $\theta \in (0,1)$ (depending on $\lambda$ and on the H\"older exponent of $\psi$)
  such that
  \(
    \delta_\ell \leq C \theta^{s_\ell}
  \),
  where $s_\ell = \#\{k \leq \ell \colon g_k  \in S_0\}$.
  Write
  \begin{equation}
    \label{dec1step1}
    \begin{aligned}
      C^{-1}\bE(\delta_\ell)
      & \leq
      \bE ( \theta^{s_\ell})
      \leq \theta^{ \frac{1}{2} (\ell+1) \bP (g_0 \in S_0)}
      + \bE \bigl(
        \theta^{s_\ell} \mathbf{1}_{s_\ell < \frac{1}{2} (\ell+1)\bP (g_0 \in S_0)}
      \bigr) 
      \\ & \leq \theta^{\frac{1}{2} (\ell+1) \bP (g_0 \in S_0)} 
      + \bP \Bigl(s_\ell  < \frac{1}{2} (\ell+1)\bP (g_0 \in S_0) \Bigr)
      \, .
    \end{aligned}
  \end{equation}
  Next,
  \[
    \bP \Bigl(s_\ell  < \frac{1}{2} (\ell+1)\bP (g_0 \in S_0) \Bigr) 
    \leq \bP \Bigl( \Bigl| 
      \sum_{i=0}^{\ell} {\bf 1}_{\{g_i \in S_0 \}}   - (\ell+1) \nu (S_0)
    \Bigr| > \frac{1}{2} (\ell+1) \nu (S_0) \Bigr)
    \, .
  \]
  Recall now the definition \eqref{defofT} of the meeting time $T$ and
  the following coupling inequality: for all $n \geq 1$, 
  \beq \label{couplinginebeta}
    \beta (n)
    := \frac{1}{2} \int \| \delta_{(x,y)} (P \times P)^n  - \nu \times \nu \|_v 
    \, d (\nu \times \nu) (x,y) 
    \leq  \bP  ( T \geq  n ) \, , 
  \eeq
  where $\| \cdot \|_{v}$ denotes the total variation norm of a signed measure
  and $P$ is the transition function of the Markov chain $(g_k)_{k \geq 0}$. 
  From $\bE(T) < \infty$, it follows that
  $\sum_{n \geq 1} \beta (n) < \infty$. Applying \cite[Thm.~6.2]{Ri00} and using that 
  $\alpha (n) \leq \beta (n)$, where $(\alpha (n))_{n \geq 1}$ is the sequence of strong mixing coefficients
  defined in \cite[(2.1)]{Ri00},  we infer  that for all $r \geq 1$, 
  \beq \label{rioine}
    \bP \Bigl( \Big | \sum_{i=0}^{\ell} 
      \mathbf{1}_{\{g_i \in S_0 \}} - (\ell+1) \nu (S_0)
    \Bigr| > \frac{1}{2} (\ell+1) \nu (S_0) \Bigr)
    \leq c_1 \ell^{-r/2} + c_2  \bP  ( T \geq  [\ell/r] ) \, , 
  \eeq
  where $c_1$ and $c_2$ are positive constant independent of $\ell$.
  The result follows.
\end{proof}

For $n \geq 0$, let
\[
  X_n = \psi \circ \sigma^n
  = \psi (g_n, g_{n+1}, \ldots) 
  \, .
\]
Then $(X_n)_{n \geq 0}$ is a stationary random process.
It is straightforward to use the meeting time to estimate correlations:

\begin{lemma}
  \label{lmacovtildeX}
  Assume that $\bE(T) < \infty$. Then for all $k \geq 1$ and $\alpha \geq 1$,
  \[
    |\Cov(X_0, X_k)| 
    \ll k^{- \alpha /2} + \bP ( { T} \geq [k / 4\alpha]) \, .
  \]
\end{lemma}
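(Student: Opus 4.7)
The plan is to control the covariance via a classical approximation-plus-coupling scheme: approximate $X_0$ by a functional of only the first $[k/2]$ coordinates of the chain, so that the residual covariance with $X_k$ is between two bounded functionals of well-separated blocks of the Markov chain $(g_n)$, which is then controlled by $\beta$-mixing via the coupling inequality \eqref{couplinginebeta}. The finite-window approximation error is in turn bounded by $\bE(\delta_\ell)$, to which Proposition~\ref{prop:boundofthedelta1first} applies.

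More precisely, I would set $\ell = [k/2]$ and define $X_0^{(\ell)} = \bE[X_0 \mid g_0, \ldots, g_\ell]$. From the definition of $\delta_\ell$ and the triangle inequality applied to the conditional expectation, one obtains the pointwise bound $|X_0 - X_0^{(\ell)}| \leq \delta_\ell$. Decomposing
\[
  \Cov(X_0, X_k) = \Cov(X_0 - X_0^{(\ell)}, X_k) + \Cov(X_0^{(\ell)}, X_k),
\]
the first term is directly bounded by $2|\psi|_\infty \bE(\delta_\ell)$. For the second term, $X_0^{(\ell)}$ is bounded and $\sigma(g_0, \ldots, g_\ell)$-measurable while $X_k$ is bounded and $\sigma(g_k, g_{k+1}, \ldots)$-measurable. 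By the classical covariance inequality for bounded functionals of $\beta$-mixing sequences (e.g.\ \cite{Ri00}) together with \eqref{couplinginebeta},
\[
  |\Cov(X_0^{(\ell)}, X_k)| \ll \beta(k-\ell) \leq \bP(T \geq k-\ell) \leq \bP(T \geq [k/(4\alpha)]),
\]
using $k - \ell \geq k/2 \geq k/(4\alpha)$ for $\alpha \geq 1$.

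It remains to apply Proposition~\ref{prop:boundofthedelta1first} with $r = \alpha$ to the first term, which yields $\bE(\delta_\ell) \ll \ell^{-\alpha/2} + \bP(T \geq [\ell/\alpha]) \ll k^{-\alpha/2} + \bP(T \geq [k/(4\alpha)])$, after absorbing multiplicative constants and weakening $[\ell/\alpha] = [[k/2]/\alpha]$ to $[k/(4\alpha)]$. Combining the two estimates gives the announced bound.

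The only step requiring a brief justification beyond routine bookkeeping is the covariance estimate $|\Cov(U,V)| \ll \beta(k-\ell)$ when $V = X_k$ is a bounded functional of the entire tail $(g_k, g_{k+1}, \ldots)$: by the Markov property the conditional law of this tail given $(g_0, \ldots, g_\ell)$ is determined by $g_\ell$ alone, and its total variation distance from the stationary law is precisely the quantity averaged in \eqref{couplinginebeta}. I do not anticipate any genuine obstacle beyond this observation.
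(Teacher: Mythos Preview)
Your proof is correct and follows essentially the same scheme as the paper: approximate $X_0$ by a functional of $(g_0,\ldots,g_{[k/2]})$, control the approximation error via Proposition~\ref{prop:boundofthedelta1first}, and bound the residual covariance through the coupling. The paper differs only cosmetically, using an explicit auxiliary-innovation approximant $X_{0,k}$ in place of your conditional expectation $X_0^{(\ell)}$, and a direct coupling estimate $\|\bE(X_{[k/2]}\mid g_0)\|_1 \leq \|X_{[k/2]}-X_{[k/2]}^*\|_1 \leq 2|\psi|_\infty\,\bP(T>[k/2])$ in place of the $\beta$-mixing covariance inequality you invoke.
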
 

\begin{proof}
  Let $k \geq 2$.
  Let $(\eps_i')_{i \geq 1}$ be an independent copy of the innovations
  $(\eps_i)_{i \geq 1}$, independent also from $g_0$.
  Define $(g_i')_{i \geq k - [k/2] + 1}$ by
  $g'_{k-[k/2]+1} = U(g_{ k- [k/2] } , \eps'_{k-[k/2]+1})$ and
  $g'_{i+1} =U(  g'_{i}, \eps'_{i+1} ) $ for $i > k- [k/2]$.
  
  Let
  \[
    X_{0,k} =  \bE_{g} \bigl( \psi (g_0, g_{1}, \ldots, g_{k-[k/2]},   (g'_{i})_{i \geq k- [k/2] +1} )
    \bigr)   \, , 
  \]
  where $\bE_{g}$ denotes the conditional expectation given $g:=(g_n)_{n \geq 0}$.
  Write
  \[
    |\Cov (X_0, X_k ) | 
    \leq  \Vert  X_k \Vert_{\infty} \Vert X_0 - X_{0,k} \Vert_1 +  | \bE  ( X_{0,k} X_k ) | 
    \, .
  \]
  Note that $\| X_k \|_{\infty} \leq | \psi |_{\infty} < \infty$.
  By Proposition~\ref{prop:boundofthedelta1first}, for any $\alpha \geq 1$, 
  \[
    \| X_0 - X_{0,k} \|_1 
    \ll k^{-\alpha/2} + \bP (T \geq [k/(4 \alpha)] )
    \, . 
  \]
  Hence it is enough to show that
  \beq \label{lmacovtildeXbut1}
    | \bE  (  X_{0,k} X_k ) |  \ll  \bP  ({T }\geq   [k/2] ) \, .
  \eeq
  With this aim, note that by the Markovian property and stationarity, 
  \[
    | \bE  (  X_{0,k} X_k ) | 
    \leq \|  X_{0,k} \|_{\infty} \| \bE (   X_k \mid g_{k- [k/2]})  \|_1
    \leq | \psi |_\infty
   \| \bE (   X_{[k/2]} \mid g_{0})  \|_1 \, .
  \]
  Recall the definition of the Markov chain \((g_n^*)_{n \geq 0}\).
  For all $n \geq 0$, let 
  \(
    X_n^* 
    = \psi ((g_k^*)_{k \geq n} )
  \).
  Since $\bE ( X_{[k/2]}^* )=0$ and  $X_{[k/2]}^*$ is independent from $g_0$, 
  \[
    \| \bE (X_{[k/2]} \mid g_{0})  \|_1 
    \leq  \| X_{[k/2]}  - X_{[k/2]}^*  \|_1 \, .
  \]
  Note now that $X_{[k/2]} \neq X_{[k/2]}^*$ only if ${ T} >[k/2]$.
  Hence 
  \[
    \| X_{[k/2]}  - X_{[k/2]}^*  \|_1 
    \leq 2 | \psi |_\infty \bP (T > [k/2] ) 
    \, , 
  \]
  which proves \eqref{lmacovtildeXbut1} and thus completes the proof of the lemma.
\end{proof}

For $n \geq 1$, let $S_n = \sum_{k=1}^{n} X_k$. From Lemma \ref{lmacovtildeX}, we get 

\begin{cor} 
  \label{cor:cov}
  Assume that $\bE(T) < \infty$. Then the limit
  \[
    c^2 = \lim_{n \to \infty} \frac{1}{n} \|S_n\|_2^2
  \]
  exists and
  \[
    c^2 = \|X_0\|_2^2 + 2 \sum_{n=1}^{\infty} \Cov (X_0, X_n)
    \, .
  \]
\end{cor}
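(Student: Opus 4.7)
The plan is to reduce the corollary to standard second-moment bookkeeping, using Lemma \ref{lmacovtildeX} as the only nontrivial input.

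First I would establish absolute summability of the covariance series. Since $\bE(T)<\infty$, we have $\sum_{n\geq 1} \bP(T\geq n) < \infty$, hence for any fixed $\alpha\geq 1$ also $\sum_{k\geq 1} \bP(T \geq [k/(4\alpha)]) < \infty$. Choosing $\alpha > 2$ makes $\sum_{k\geq 1} k^{-\alpha/2}$ converge as well. Combining these with Lemma \ref{lmacovtildeX} yields
\[
\sum_{k=1}^\infty |\Cov(X_0,X_k)| < \infty,
\]
so the series defining $c^2$ is absolutely convergent.

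Next I would expand $\|S_n\|_2^2$ using stationarity (recalling that $\bE(X_0)=\int \psi\, d\bP_\Omega = 0$). Writing $\|S_n\|_2^2 = \sum_{i,j=1}^n \bE(X_i X_j)$ and grouping by $|i-j|$,
\[
\|S_n\|_2^2 = n\,\|X_0\|_2^2 + 2\sum_{k=1}^{n-1}(n-k)\,\Cov(X_0,X_k),
\]
so that
\[
\frac{1}{n}\|S_n\|_2^2 = \|X_0\|_2^2 + 2\sum_{k=1}^{n-1}\Bigl(1 - \frac{k}{n}\Bigr)\Cov(X_0,X_k).
\]

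Finally I would pass to the limit. The terms $(1 - k/n)\mathbf{1}_{k\leq n-1}\,\Cov(X_0,X_k)$ are dominated by $|\Cov(X_0,X_k)|$, which is summable by the first step, and they converge pointwise in $k$ to $\Cov(X_0,X_k)$. Dominated convergence on the counting measure on $\bN$ therefore gives
\[
\lim_{n\to\infty}\frac{1}{n}\|S_n\|_2^2 = \|X_0\|_2^2 + 2\sum_{k=1}^\infty \Cov(X_0,X_k),
\]
which is the desired identity. There is no real obstacle here; the only point that requires some care is confirming that the bound of Lemma \ref{lmacovtildeX}, once $\alpha$ is taken large enough, together with $\bE(T)<\infty$, yields a genuinely summable majorant, and this is exactly what justifies both the existence of $c^2$ and the interchange of limit and sum.
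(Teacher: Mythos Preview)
Your proof is correct and is precisely the standard argument the paper has in mind: the corollary is stated in the paper as an immediate consequence of Lemma~\ref{lmacovtildeX} without further details, and your three steps (summability of covariances via the lemma with $\alpha>2$, the stationary expansion of $\|S_n\|_2^2$, and dominated convergence on $\bN$) are exactly how one fills this in.
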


\begin{lemma} \label{lmaFN} 
  Assume that $\bE(T) < \infty$. Then, for any $x >0$ and any $r \geq 1$, 
  \beq 
    \label{dec1maxSnproba2}
    \bP \Bigl(  \max_{k \leq n} | S_k | \geq  5 x \Bigr) 
    \ll  \frac{n}{x} \bigl( x^{-r} + \bP ( T \geq  Cx ) \bigr)
    + \Bigl( 1 + \kappa x^2/n \Bigr)^{-r/2}
    \, ,
  \eeq
  where $C$ and $\kappa$ are  constants depending on $|\psi|_\infty$ and $r$,
  and the constant involved in $\ll$ does not depend on $(n,x)$. 
\end{lemma}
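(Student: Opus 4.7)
The plan is to prove this Fuk--Nagaev-type inequality by combining an $L^1$ approximation of $(X_k)$ by a truncated version depending only on a finite future window of the Markov chain, with a Rosenthal-type moment inequality for the truncated process. The key inputs are Proposition~\ref{prop:boundofthedelta1first} and the coupling bound $\alpha(n) \leq \bP(T \geq n)$ established in~\eqref{couplinginebeta}.

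First I would fix a truncation scale $m = [Cx]$ (with $C$ chosen later) and set $X_k^{(m)} = \bE(X_k \mid g_k, g_{k+1}, \ldots, g_{k+m})$. By Proposition~\ref{prop:boundofthedelta1first} applied with parameter $2r$ in place of $r$,
\[
  \|X_k - X_k^{(m)}\|_1 \leq \bE(\delta_m) \ll m^{-r} + \bP(T \geq [m/(2r)]).
\]
Summing these $L^1$ bounds over $1 \leq j \leq n$ and applying Markov's inequality to the maximum of the partial sums of the remainders yields
\[
  \bP\Bigl(\max_{k \leq n} \Bigl|\sum_{j=1}^k (X_j - X_j^{(m)})\Bigr| \geq x\Bigr)
  \leq \frac{n}{x}\, \bE(\delta_m)
  \ll \frac{n}{x}\bigl(x^{-r} + \bP(T \geq Cx)\bigr),
\]
which accounts for the first two terms of~\eqref{dec1maxSnproba2}.

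Next I would bound $\bP(\max_{k \leq n}|S_k^{(m)}| \geq 4x)$ for $S_k^{(m)} = \sum_{j=1}^k X_j^{(m)}$. The process $(X_k^{(m)})_{k \geq 0}$ is stationary, centered, bounded by $2|\psi|_\infty$, and its strong mixing coefficients at lag $k$ are controlled by $\bP(T \geq k - m)$ for $k > m$ (since $X_k^{(m)}$ is measurable with respect to a finite window of $(g_k)$, the coupling argument leading to~\eqref{couplinginebeta} applies with a shift by $m$). Because $\bE(T) < \infty$, these mixing coefficients are summable. Rio's Fuk--Nagaev maximal inequality \cite[Thm.~6.2]{Ri00}, applied to the bounded mixing sequence $(X_k^{(m)})$, then delivers a bound of the form
\[
  \bP\Bigl(\max_{k \leq n}|S_k^{(m)}| \geq 4x\Bigr)
  \ll \bigl(1 + \kappa x^2/n\bigr)^{-r/2} + \frac{n}{x}\,\bP(T \geq Cx),
\]
where the second term is harmlessly absorbed into the bound already obtained for the remainders. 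Combining the two estimates through the triangle inequality $\bP(\max_{k \leq n}|S_k| \geq 5x) \leq \bP(\max|\sum(X_j - X_j^{(m)})| \geq x) + \bP(\max|S_k^{(m)}| \geq 4x)$ and adjusting constants yields~\eqref{dec1maxSnproba2}.

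The main obstacle is ensuring that the Fuk--Nagaev type bound for the truncated process has precisely the polynomial form $(1 + \kappa x^2/n)^{-r/2}$ with constants depending only on $|\psi|_\infty$ and $r$. This is exactly the regime in which Rio's maximal inequality for bounded strongly mixing sequences is applicable, but some care is needed to calibrate the truncation scale $m = [Cx]$ so that the three terms of~\eqref{dec1maxSnproba2} emerge simultaneously in the stated form and so that the bound on the mixing coefficients of $(X_k^{(m)})$ in terms of $\bP(T \geq \cdot)$ feeds correctly into the Rio-type estimate.
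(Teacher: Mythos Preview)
Your approach is viable but genuinely different from the paper's. The paper does not invoke Rio's Fuk--Nagaev inequality as a black box on a truncated process; instead it carries out an explicit Bernstein-type blocking argument. After the same initial truncation (their $X_k'$, depending on a window of length $[q/2]$ with $q \approx x/(r|\psi|_\infty)$), the paper groups the truncated variables into blocks $U_i'$ of length $q$, separates odd and even blocks, and then uses the meeting-time coupling directly to replace each block by an \emph{independent} copy $U_i^*$ with $\|U_i' - U_i^*\|_1 \leq 2q|\psi|_\infty \bP(T \geq [q/2])$. Bennett's inequality on the independent blocks produces the term $(1+\kappa x^2/n)^{-r/2}$, with the variance input $\|U_i^*\|_2^2 \ll q$ coming from Corollary~\ref{cor:cov}. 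Your route is more economical in citations but less self-contained; the paper's route makes the dependence on $|\psi|_\infty$ and $r$ completely explicit.

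The point you should address more carefully is the variance parameter in Rio's inequality. In~\cite[Thm.~6.2]{Ri00} the Gaussian term is $(1+\lambda^2/(r s_n^2))^{-r/2}$ with $s_n^2 = \sum_{i,j\leq n}|\Cov(X_i^{(m)},X_j^{(m)})|$, and you need $s_n^2 \leq Cn$ \emph{uniformly in $m$}. This is not automatic: the $\alpha$-mixing coefficients of $(X_k^{(m)})$ at lags $\leq m$ are uncontrolled (the past and future windows overlap), so the naive bound $\sum_k \alpha_k^{(m)} \gtrsim m \sim x$ would give $s_n^2 \lesssim nx$ and destroy the Gaussian term. To get the uniform bound you must instead estimate $|\Cov(X_0^{(m)},X_k^{(m)})|$ directly: for $k>m$ use the $\alpha$-mixing of $(g_j)$ at lag $k-m$, and for $k\leq m$ compare with $|\Cov(X_0,X_k)|$ via Lemma~\ref{lmacovtildeX} and the approximation error $\bE(\delta_m)$, using that $m\,\bE(\delta_m)$ is bounded (this requires $m\,\bP(T\geq cm)\to 0$, which follows from $\bE(T)<\infty$). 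You also need the calibration you flag: the relevant lag in Rio's inequality is $[\lambda/(r|\psi|_\infty)] \sim x/(r|\psi|_\infty)$, so the constant $C$ in $m=[Cx]$ must satisfy $C < 1/(r|\psi|_\infty)$ (up to absolute factors) for the mixing coefficient at that lag to be bounded by $\bP(T\geq C'x)$ with $C'>0$.
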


\begin{proof}
  Our proof is similar to that of \cite[Thm.~6.1]{Ri00}.
  
  Let $(\eps'_n)_{n \geq 1}$ be an independent copy of the innovations $(\eps_n)_{n \geq 1}$,
  independent also of $g_0$.
  
  Fix $n \geq 1$ and $1 \leq q \leq n$.
  For $k \geq 0$, let
  \[
    X'_{k} 
    = \bE_g \bigl( \psi (g_k, g_{k+1}, \ldots, g_{k+[q/2]},
      ({\tilde g}_{i})_{i \geq  k+[q/2] +1} )  \bigr)
    \, , 
  \]
  where $\bE_g$ denotes the conditional expectation given 
  $(g_n )_{n \geq 0}$, while
  $(\tg_i)_{i \geq  k+[q/2]+1}$ is defined by
  $\tg_{k+[q/2]+1} = U (g_{k+ [q/2] } , \eps'_{k+[q/2]+1})$ and
  $\tg_{i+1} = U (\tg_i, \eps'_{i+1} ) $ for $i > k+[q/2]$.
  The function $U$ is given by \eqref{defUforMC}.
  
  Let
  \[
    S_n' = \sum_{k=1}^{n} X_k'
    \, .
  \]
  Observe that
  \[
     \max_{k \leq n} |S_k| 
     \leq \sum_{k=1}^{n} |X_k - X_{k}'| +  \max_{1 \leq k  \leq  n}  \big |  S'_k   \big | \, .
  \]
  Now, set $k_n = [n/q]$ and $U'_i =S'_{iq} - S'_{(i-1)q}$ for $1 \leq i \leq k_n$ and 
  $U'_{k_n +1} = S'_n - S'_{k_n q }$. Since all integers $j$
  are on the distance of at most $[q/2]$ from $q \bN$, we write
  \beq
    \label{dec1maxSn}
    \begin{aligned}
      \max_{k  \leq  n} |S_k|
      & \leq \sum_{k=1}^{n} |X_k - X_{k}'| 
        +  2 [q/2]  |\psi|_\infty 
      \\ & + \max_{2j \leq k_n +1}  
      \Bigl| \sum_{k=1}^j  U_{2k}' \Bigr| 
      + \max_{2j -1  \leq k_n +1}  \Bigl| \sum_{k=1}^j  U_{2k -1}' \Bigr|
      \, .
    \end{aligned}
  \eeq
  We shall now construct random variables $(U_i^*)_{1 \leq i \leq k_n +1}$ such that 
  a) $U_i^*$ has the same distribution as $U'_i$ for all $1 \leq i \leq k_n +1$,
  b) the variables $(U_{2i}^*)_{2 \leq 2i \leq k_n +1}$ are independent
     as well as the random variables $(U_{2i-1}^*)_{1 \leq 2i-1 \leq k_n +1}$ and 
  c) we can suitably control $\|U_i - U_i^*\|_1$. 

  This is done recursively as follows. 
  Let $U_{2}^* = U_2'$ and let us first construct $U_4^*$. 
  With this aim, we note that 
  \[
    X_k' = h_q   (g_k, g_{k+1}, \ldots, g_{k+[q/2]} ) 
  \]
  for some centered function $h_q$ with $|h_q|_\infty \leq |\psi|_\infty$.
  Let $g^{(2)}_{2q +[q/2]} $ be a random variable in $S$ with law $\nu$ and 
  independent from $(g_0, (\eps_k)_{k \geq 1})$ and define the Markov chain 
  $(g^{(2)}_{k})_{k \geq 2q +[q/2]}$ by: 
  \[
    g^{(2)}_{k+1} = U ( g^{(2)}_k , \eps_{k+1} ) 
    \quad \text{for } k \geq 2q +[q/2] 
    \, .
  \]
  Let
  \[
    X_k^{(2)} 
    =  h_q (g^{(2)}_k, g^{(2)}_{k+1}, \ldots, g^{(2)}_{k+[q/2]} )  
    \quad \text{for } k \geq  2q +[q/2]
  \]
  and
  \[
    U_4^* = \sum_{k= 3q +1}^{4q} X_k^{(2)} \, .
  \]
  It is clear that $U_4^* $ is independent of of $U_{2}^*$ and equal to $U'_4$ in law. 
  
  Now, for any $i \geq 3$, we define Markov chains $(g^{(i)}_k )_{ k \geq 2 (i-1) q + [q/2]}$ in the following iterative way : $g^{(i)}_{ 2 (i-1) q + [q/2]}$ is a random variable in $S$ with law $\nu$ and independent from $\big ( g_0,  (\varepsilon_k)_{k \geq 1},  \big (g^{(j)}_{2 (j-1) q + [q/2]} \big )_{2 \leq j < i } \big ) $ and we set  \[
    g^{(i)}_{k+1} = U ( g^{(i)}_k , \eps_{k+1} )
    \quad \text{for } k \geq 2 (i-1) q + [q/2]
    \, .
  \]
  Next,
  \[
    X_k^{(i)} 
    =  h_q (g^{(i)}_k, g^{(i)}_{k+1}, \ldots, g^{(i)}_{k+[q/2]} )
    \quad \text{for } k \geq  2 (i-1) q +[q/2]
  \]
  and
  \[
    U_{2i}^* = \sum_{k= (2i-1)q +1}^{2iq} X_k^{(i)}
    \, .
  \]
  It is clear that the so-constructed $(U_{2i}^*)_{2 \leq 2i \leq k_n+1}$ are independent and 
  that $U_{2i}^*$ is equal in law to $U'_{2i}$ for all $i$.
  
  By stationarity, for all $1 \leq i \leq [(k_n +1)/2]$, 
  \[
    \| U_{2i}^* - U'_{2i} \|_1 
    \leq \| U_{4}^* - U'_{4} \|_1 
    \leq \sum_{k= 3q +1}^{4q} \| X'_k - X_k^{(2)} \|_1 
    \, .
  \]
  But, by stationarity again, 
  \[
    \sum_{k= 3q +1}^{4q} \| X_k - X_k^{(2)} \|_1 
    = \sum_{k= q - [q/2] +1}^{2q - [q/2] }   \| h_q (g_k, g_{k+1}, \ldots, g_{k+[q/2]} )  
      - h_q (g^{*}_k, g^{*}_{k+1}, \ldots, g^{*}_{k+[q/2]} )  \|_1
      \, , 
  \]
  where $(g_k^*)_{k \geq 0}$ is the Markov chain defined in~\eqref{defMCstar}.
  Hence, for all $1 \leq i \leq [(k_n +1)/2]$, 
  \beq  \label{bound1diffU2i}
    \| U_{2i}^* - U'_{2i} \|_1
    \leq  2 |\psi|_\infty \sum_{k=q - [q/2] +1}^{2q - [q/2] }
    \bP ( T \geq k ) \leq 2 q  \vert \psi \vert_{\infty}   \bP ( T \geq [q/2] )   \, .
  \eeq
  
  Similarly for the odd blocks, we can construct random variables 
  $(U_{2i-1}^*)_{1 \leq 2i-1 \leq k_n +1}$ which are independent
  and such that $U_{2i -1}^*$ equals in law to $U'_{2i-1}$ for all $i$
  and
  \beq  \label{bound2diffU2i}
    \|U_{2i-1}^* - U'_{2i-1} \|_1 
    \leq 2 q  | \psi |_\infty 
    \bP ( T \geq [q/2] )
    \, .
  \eeq
  
  Overall, from~\eqref{dec1maxSn},~\eqref{bound1diffU2i} and~\eqref{bound2diffU2i},
  we deduce that for all $x >1$ and $1 \leq q \leq n$ such that $ q |\psi|_\infty \leq x $, 
  \beq
    \label{dec1maxSnproba}
    \begin{aligned}
      \bP \Bigl( \max_{k \leq  n} |S_k| \geq  5 x \Bigr) 
      \leq x^{-1} \sum_{k=0}^{n-1} \|X_k - X_{k}' \|_1 
        +  2 n x^{-1} |\psi |_\infty \bP (T \geq [q/2])  
      \\ + \bP \Bigl( \max_{2j \leq  k_n +1}  
        \Bigl| \sum_{k=1}^j  U_{2k}^* \Bigr| \geq x  \Bigr) 
      + \bP \Bigl( \max_{2j -1  \leq  k_n +1}  
        \Bigl| \sum_{k=1}^j  U_{2k -1}^* \Bigr| \geq x \Bigr)  \, .
    \end{aligned}
  \eeq
  By Proposition~\ref{prop:boundofthedelta1first},
  for all $\alpha \geq 1$, 
  \beq \label{boundofthedelta1ter}
    \| X_k - X'_{k}   \|_1 \ll q^{-\alpha/2} +   \bP  ( T \geq  [q/2] / \alpha  ) \, , 
  \eeq
  where the constant involved in $\ll$ does not depend on $k$ or $q$.
  Using that
  $ \| U_{2i}^* \|_\infty \leq q | \psi |_\infty $, we apply Bennet's inequality and derive
  \[
    \bP \Bigl( \max_{2j \leq k_n +1} 
    \Bigl| \sum_{k=1}^j U_{2k}^* \Bigr| \geq x  \Bigr)
    \leq 2 \exp \Bigl( - \frac{x}{ 2 q |\psi|_\infty}
    \log \big ( 1 + x q \vert \psi \vert_{\infty} / v_q \big ) \Bigr) \, , 
  \]
  where one can take $v_q$ any real such that 
  \[
    v_q \geq \sum_{i=1}^{  [(k_n +1)/2]} \Vert U^*_{2i } \Vert^2_2= \sum_{i=1}^{  [(k_n +1)/2]} \Vert U'_{2i } \Vert^2_2 \, .
  \]
But, by stationarity, 
  \[
    \| U'_{2i } \|_2 
    = \| S_q' \|_2 
    \leq \| S_q \|_2 + (2 |\psi|_\infty)^{1/2} 
    \sum_{k=1}^q \| X_k - X'_{k} \|^{1/2}_1
    \, .
  \]
  By Corollary~\ref{cor:cov},
  \(
    \|S_q\|_2^2 
    \ll q
  \).
  Since $n \bP ( T \geq n )  \ll 1 $, we infer that
  \[
    \sum_{k=1}^q \Vert X_k - X'_{k}   \Vert^{1/2}_1 \ll q^{1/2} \, . 
  \]
  Therefore, $\| U'_{2i} \|_2^2 \ll q $. Hence, 
  taking $v_q = n / \kappa'$ where $\kappa'$  is a sufficiently small
  positive constant not depending on $x$, $n$ and $q$, we get 
  \beq 
    \label{dec1maxSnprobabound1}
    \bP \Bigl(\max_{2j \leq k_n +1} 
      \Bigl| \sum_{k=1}^j U_{2k}^* \Bigr| \geq x  
    \Bigr) 
    \leq 2 \exp \Bigl( - \frac{x}{ 2 q |\psi|_\infty }
      \log \bigl(1 + \kappa' x q |\psi|_\infty / n \bigr) \Bigr)  \, .
  \eeq

  It follows from \eqref{dec1maxSnproba}, \eqref{boundofthedelta1ter} and
  \eqref{dec1maxSnprobabound1}, that for all $\alpha \geq 1$, $x>0$ and $1 \leq q < n$ with
  $q |\psi|_\infty \leq x $,
  \begin{multline*} 
    \bP \Bigl( \max_{k \leq n} |S_k| \geq  5 x \Bigr) 
    \ll  n x^{-1}  \bigl( 
      q^{-\alpha/2} + \bP ( T \geq  [q/2] / \alpha )
    \bigr) 
    \\ 
    + \exp \Bigl( - \frac{x}{ 2 q |\psi|_\infty }
      \log \bigl( 1 +  \kappa ' x q |\psi|_\infty /n   \bigr) \Bigr)
    \, .
  \end{multline*}
  
  Let now $r \geq 1$. Then, for $ x \in [ r |\psi|_\infty, n |\psi|_\infty / 5 ]$,
  choose $q = [ x / (r\vert \psi \vert_{\infty} )]$ and $ \alpha = 2 r$ in the previous inequality and 
  the result follows. To end the proof, note that if $x > n |\psi|_\infty / 5$, the deviation probability obviously equals zero and if $0 < x < r |\psi|_\infty$, the inequality follows easily from Markov's inequality at order $1$. 
\end{proof}

The following Rosenthal-type inequality relates $T$ 
to the moments of $S_n$.

\begin{prop} 
  \label{propRosenthal} 
  Assume  that $\bE (T) < \infty$. Then, for each $p \geq 2$, there exist $\kappa_1, \kappa_2, \kappa_3 > 0$ such that
  for all $n \geq 1$,
  \[
    \bE \Bigl(\max_{k  \leq  n} |S_k|^p \Bigr)
    \leq \kappa_1 n^{p/2} 
    + \kappa_2 n \sum_{i=1}^{ [ \kappa_3 n] } i^{p-2} \bP ( T  \geq  i  ) 
    \, .
  \]
\end{prop}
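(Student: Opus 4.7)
The plan is to convert the maximal tail bound of Lemma~\ref{lmaFN} into a moment bound by the standard layer-cake formula
\[
  \bE\bigl(\max_{k \leq n} |S_k|^p\bigr)
  = p \int_0^\infty x^{p-1} \bP\bigl(\max_{k \leq n} |S_k| \geq x\bigr) dx
  \, .
\]
Since $|S_k| \leq n |\psi|_\infty$ almost surely, the integrand vanishes for $x \geq n|\psi|_\infty$, so the integration is effectively over $(0, n|\psi|_\infty]$. I would substitute $x = 5y$ and split the resulting integral at $y = 1$: on $(0,1]$ just use $\bP(\cdot) \leq 1$ (which contributes an absolute constant), and on $(1, n|\psi|_\infty/5]$ apply Lemma~\ref{lmaFN} with a parameter $r > p$, say $r = p+1$.

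Lemma~\ref{lmaFN} produces three terms to integrate against $y^{p-1}$. The Gaussian-type term $(1+\kappa y^2/n)^{-r/2}$ is handled by the substitution $u = \kappa y^2/n$, which reduces it to a Beta-type integral $\tfrac{1}{2}(n/\kappa)^{p/2}\int_0^\infty u^{p/2-1}(1+u)^{-r/2} du$; this integral converges precisely because $r > p$, yielding the desired $\kappa_1 n^{p/2}$. The polynomial term $ny^{-r-1}$ integrates to $n \int_1^\infty y^{p-r-2} dy = O(n)$, which is absorbed into $\kappa_1 n^{p/2}$ because $p \geq 2$.

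The meeting-time term $(n/y)\bP(T \geq Cy)$ requires the most care but is the reason for the shape of the statement. After substituting $z = Cy$, one gets a bound of the form $n C^{-(p-1)} \int_C^{C n|\psi|_\infty/5} z^{p-2}\bP(T \geq z) dz$; since $\bP(T \geq \cdot)$ is non-increasing and $z^{p-2}$ is non-decreasing (as $p \geq 2$), comparing with a Riemann sum on integer intervals gives
\[
  \int_C^{\kappa_3' n} z^{p-2}\bP(T \geq z) dz
  \leq \sum_{i=1}^{[\kappa_3 n]} (i+1)^{p-2} \bP(T \geq i)
  \leq 2^{p-2} \sum_{i=1}^{[\kappa_3 n]} i^{p-2} \bP(T \geq i)
  \, ,
\]
with $\kappa_3 = C |\psi|_\infty/5$. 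This produces the second contribution $\kappa_2 n \sum_{i=1}^{[\kappa_3 n]} i^{p-2} \bP(T \geq i)$.

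The whole proof is essentially a routine moment-from-deviation integration; there is no genuinely delicate step. The main bookkeeping concerns are: (i) choosing $r > p$ so the Beta integral converges and $r > p-1$ so the polynomial term gives $O(n)$; (ii) using the deterministic bound $|S_k| \leq n|\psi|_\infty$ to cut the integral at $n|\psi|_\infty/5$, which is what allows the sum to be truncated at $[\kappa_3 n]$ rather than running to infinity; and (iii) absorbing the several constants from $|\psi|_\infty$, $r$, and Lemma~\ref{lmaFN} into the final $\kappa_1, \kappa_2, \kappa_3$.
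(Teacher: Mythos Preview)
Your proof is correct and follows essentially the same route as the paper: both write the moment via the layer-cake identity, cut the integral at $n|\psi|_\infty/5$ using $|S_k|\le n|\psi|_\infty$, apply Lemma~\ref{lmaFN} with $r=p+1$, and then estimate the three resulting contributions (the Gaussian-type term giving $n^{p/2}$, the polynomial term giving $O(n)\le O(n^{p/2})$, and the meeting-time term giving $n\sum_{i\le[\kappa_3 n]} i^{p-2}\bP(T\ge i)$ after a change of variable and Riemann-sum comparison). The only cosmetic difference is your split point $y=1$ versus the paper's $r|\psi|_\infty$, which is immaterial.
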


\begin{proof}
  Write
  \beq 
    \label{boundEspviaProba}
    \bE \bigl(  \max_{k  \leq n}  |S_k|^p \bigr) 
    = p  5^p \int_0^{ n \vert \psi \vert_{\infty} /5 } x^{p-1} 
    \bP \bigl( \max_{k  \leq  n}  |S_k| > 5 x \bigr)
    \, dx
    \, .
  \eeq
  Using Lemma~\ref{lmaFN} with $r = p+1$, we get that for $p \geq 2$,
  \[
    \int_{r |\psi|_\infty}^{n |\psi|_\infty / 5} 
    x^{p-1} \bP \bigl(\max_{k \leq n} |S_k| \geq  5 x \bigr) \, dx
    \ll n^{p/2} + 
    n \int_{r |\psi|_\infty}^{n |\psi|_\infty / 5} 
    x^{p-2} \bP ( T \geq  Cx ) \, dx
    \, .
  \]
  Together with \eqref{boundEspviaProba}, the above implies that for any $p \geq 2$, 
  \[
    \bE \Bigl(\max_{k \leq n} |S_k|^p \Bigr)
    \ll n^{p/2 } + 
    n \int_0^{C n |\psi|_\infty / 5} x^{p-2} \bP (T \geq x) \, dx
    \, , 
  \]
  where the constant involved in $\ll$ depends on $p$ but not on $n$.
  The result follows.
\end{proof}

\section{Proof of Theorem~\ref{ThASIPLSV}}
\label{sec:proofASIP}

\subsection{Outline}

Let $g_0, g_1, \ldots$ be the stationary Markov chain constructed
in Section~\ref{sec:RMC}. Suppose that $\psi \colon \Omega \to \bR$ is a 
H\"older continuous observable with $\int \psi \, d\bP_\Omega = 0$. Let
\[
  X_n = \psi \circ \sigma^n
  = \psi (g_n, g_{n+1}, \ldots)
  \qquad \text{and} \qquad
  S_n = \sum_{k=1}^{n} X_k
  \, .
\]
By Corollary~\ref{cor:proj}, the proof of Theorem~\ref{ThASIPLSV} reduces to 
proving ASIP with the same rates for the process $(S_n)_{n \geq 1}$.
This is the aim of this section. Our strategy is to adapt the argument in~\cite{CDM17}.

\begin{rmk}
We restrict to the case when the variance $c^2$, given by \eqref{defc2}, is positive.
The case $c^2 = 0$ requires a different approach, and it is addressed
by Remark \ref{cob}.
\end{rmk}

The Markov chain $(g_n)_{n \geq 0}$
behaves similarly to the Markov chain $(W_n)_{n \geq 0}$ on the state space
$\bN$, studied in \cite[Sec.~3.3.1]{CDM17}.
Let us briefly recall \cite[Cor.~5]{CDM17}:
For any bounded and centered function $h \colon \bN \to \bR$,
the process 
$\bigl(\sum_{k=1}^{n} h (W_k)\bigr)_{n \geq 1}$ satisfies the ASIP with rate 
$o(n^{1/p} )$, $p>2$, provided that 
$\sum_{ k \geq 1 } k^{p-2} \bP ( T \geq k ) < \infty$ where $\nu$ is the stationary distribution of $(W_n)_{ n \in \bN }$ and $T$ is the meeting time of the Markov chain.

\begin{rmk}
  By \cite[Prop.~15]{CDM17}, the condition $\sum_{ k \geq 1 } k^{p-2} \bP ( T \geq k ) < \infty$
  is sharp to get the rate $o(n^{1/p} )$ in the ASIP. 
  \end{rmk}

The strategy used in \cite{CDM17} was to adapt the method of Berkes, Liu and Wu \cite{BLW14} for functions of iid r.v.'s to functions of  Markov chains, in order  to obtain  sufficient conditions   for the ASIP  with rate $o (n^{1/p})$  in terms of an ${\mathbb L}^1$-coupling coefficient. For the Markov chain $(W_n)_{ n \in \bN }$, this ${\mathbb L}^1$-coupling condition can be obtained from the tails of the meeting time. 

The main difference between our situation and the one considered  in~\cite{CDM17} is that
$X_n$'s are functions of not only $g_n$, but the whole future $g_n, g_{n+1}, \ldots$
However, using the regularity of our observables,
we shall see that it is possible to 
approximate $X_n$ by a measurable function of a finite number of coordinates.
Then the proof in \cite{BLW14} can be adapted also to our situation, and  the rate in the ASIP is, as in \cite{CDM17}, related to the tail of the meeting time of the  chain $(g_n)_{ n \geq 0}$ (see Section~\ref{sec:MT}).

\subsection{The proof}

Let $c^2$ be given by \eqref{defc2}. From Corollaries  \ref{cor:proj} and \ref{cor:cov}, $c^2 = \lim_{n \rightarrow \infty }n^{-1} \|S_n\|^2_2 = \|X_0\|_2^2 + 2 \sum_{n=1}^{\infty} \Cov (X_0, X_n)$.
If the process $(S_n)_{n \geq 0}$ satisfies the ASIP, this has to be the variance of the limiting Brownian motion. Recall that we suppose that $c^2 > 0$.

All along the proof, we set $\beta = 1 / \gamma$ (so  $\beta >2$ since $\gamma < 1 /2$), and $\eta$ will designate a constant, which is equal either to $1$ in case of the map \eqref{eq:LSV} or to $0$ 
in case of the map \eqref{eq:HG}. 

It suffices to prove the following strong approximation: one can redefine $(S_n)_{n \geq 1}$ without changing its distribution on a  probability space (possibly richer than 
 $(\Omega, \bP_{\Omega} )$)   on which
there exists a sequence $(N_i)_{i \geq 1}$ of iid centered Gaussian r.v.'s with variance 
$c^2 $ such that for all $\kappa >1/\beta$, 
\begin{equation}
  \label{resultSAter}
  \sup_{k \leq n } \Bigl| S_n - \sum_{i=1}^k N_i \Bigr| 
  = o( n^{ 1/\beta} (\log n)^{\eta \kappa} ) \quad \text{a.s.} 
\end{equation}
The proof of \eqref{resultSAter} is divided in several steps.
Throughout, we use the notation $b_n = \lceil (\log n)/(\log 3) \rceil$
for $n \geq 2$ (so that $b_n$ is the unique integer such that $3^{b_n -1} < n \leq 3^{b_n}$),
and fix $\kappa >1/\beta$.

\medskip

\noindent \textit{Step 1.} For $\ell \geq 0$,  let 
\beq \label{defml} m_{\ell}= [ 3^{\ell/ \beta} \ell^{\eta \kappa }]
\eeq and define, for $k \geq 0$,   
\[
X_{\ell , k} =  \bE_g \big ( \psi (g_k, g_{k+1}, \ldots, g_{k+m_{\ell}},   ({\tilde g}_{i})_{i \geq  k+m_{\ell} +1} )  \big ) \, , 
\]
where $\bE_g$ denotes the conditional expectation given $g:=(g_n )_{n \geq 0}$. 
Here $({\tilde g}_{i})_{i \geq  k+m_{\ell} +1} $ is defined as follows: ${\tilde g}_{k+m_{\ell} +1} = U (g_{k+m_{\ell} } , \eps'_{k+m_{\ell} +1})$ and 
${\tilde g}_{i+1} = U (  {\tilde g}_{i}, \eps'_{i+1} ) $ for any $i > k+m_{\ell} $, where $(\eps_i')_{i \geq 1}$ is an independent copy of $(\eps_i)_{i \geq 1}$, independent of $g_0$, and $U$ is given by \eqref{defUforMC}.  Note that the $X_{\ell , k}$'s are centered. 
Define
\[
 { W}_{\ell, i} = \sum_{k=1+3^{\ell-1}}^{ i  +  3^{\ell-1}}  X_k  \,  \text{ , }  \,  { \overline W}_{\ell, i} = \sum_{k=1+3^{\ell-1}}^{ i  +  3^{\ell-1}}  {X}_{\ell , k }  \,  \text{ and }  \, 
 { W}'_{\ell, i}  =  { W}_{\ell, i}  -  { \overline W}_{\ell, i}  \, .
 \]
The fist step is to prove that 
\begin{equation} \label{stepone}
\sum_{\ell=1}^{ b_n -1}  { W}'_{\ell, 3^{\ell} - 3^{\ell-1} }   +  { W}'_{b_n,  n - 3^{b_n-1}  }   =o( n^{ 1/\beta} (\log n)^{\eta \kappa} )  \quad  a.s. 
\end{equation}
This will hold provided that for all $\eps>0$, 
\begin{equation}\label{BC}
\sum_{j \geq 1}  {\mathbb P} \left (  \sum_{\ell=1}^j \sum_{k= 3^{\ell -1} +1}^{3^\ell}  |X_k-X_{\ell , k}| > \eps 3^{j/\beta} j^{\eta \kappa }  \right) < \infty \, .
\end{equation}
By Proposition~\ref{prop:boundofthedelta1first}, for all $k \geq 0$, $\ell \geq 1$ and $r \geq 1$, 
\beq \label{boundofthedelta1first}
  \| X_k - X_{\ell, k} \|_1 \ll m^{-r/2}_{\ell} +  \bP  ( T \geq  [m_{\ell}/r] ) \, , 
\eeq
where the constant involved in $\ll $ does not depend on $k$ and $\ell$. By Markov inequality at order $1$,
for all $\eps >0$ and $r \geq 1$,
\begin{multline*}
\sum_{j \geq 1}  {\mathbb P} \left (  \sum_{\ell=1}^j \sum_{k= 3^{\ell -1} +1}^{3^\ell}  |X_k-X_{\ell , k}| > \eps 3^{j/\beta} j^{\eta \kappa} \right) 
\\
\ll \sum_{j \geq 1}  \frac{ 1 }{ \eps 3^{j/\beta} j^{\eta \kappa }} \sum_{\ell=1}^j  3^{\ell }  m^{-r/2}_{\ell}  + \sum_{j \geq 1}  \frac{ 1 }{ \eps 3^{j/\beta} j^{\eta \kappa}} 
\sum_{\ell=1}^j  3^{\ell } 
\bP  ( T \geq  [m_{\ell}/r] ) \, . \end{multline*}
Taking into account the fact that $m_{\ell}= [ 3^{\ell/ \beta} \ell^{\eta \kappa }]$, the first term in the right-hand side is finite provided we take $r > 
2 ( \beta -1)$ whereas, by a change of variables,  we have, for any $r \geq 1$, 
\beq  \label{evident}
\sum_{j \geq 1}  \frac{1}{ 3^{ j/\beta} j^{\eta \kappa}} \sum_{\ell=1}^j  3^{\ell }  \bP  ( T \geq  [m_{\ell}/r] ) \leq C \sum_{n \geq 2} \frac{n^{\beta -2}}{ (\log n)^{\eta \kappa \beta  }}  \bP  ( T \geq  n)  \, .
\eeq
where $C$ is a constant depending on $r$, $\beta$, $\kappa$ and $\eta$.  In case of the map \eqref{eq:LSV}, $\eta =1$ and  the series above converge  iff $\bE (\tpsi_{\beta, \kappa \beta} (T) )  < \infty$, which holds by Lemma \ref{lem:nngh}(a) and the fact that  $\kappa \beta >1 $. Now in case of the map \eqref{eq:HG}, $\eta =0$ and then, again from  Lemma \ref{lem:nngh},  the series above converges  since $\bE (T^{\beta -1} ) < \infty$. It follows that \eqref{BC} is satisfied and then \eqref{stepone} holds. 

This completes the proof of step 1.

\medskip

\noindent \textit{Step 2.}
Let
\beq \label{deftildeXkl}
  {\tilde X}_{\ell , k} = \bE ( X_{\ell , k}  | \eps_{k - m_{\ell}} , \ldots,  \eps_{k + m_{\ell}}  )
  \, .
\eeq
Let 
\(
  { \widetilde W}_{\ell, i} = \sum_{k=1+3^{\ell-1}}^{ i  +  3^{\ell-1}}  {\tilde X}_{\ell, k }
\)
and
\(
 { W}''_{\ell, i}  =   { \overline W}_{\ell, i} -{ \widetilde W}_{\ell, i }
\).
The second step consists of proving that
\begin{equation} \label{steptwo}
\sum_{\ell=1}^{ b_n -1}  { W}''_{\ell, 3^{\ell} - 3^{\ell-1} }  +  { W}''_{b_n,  n - 3^{b_n-1}  } = o( n^{1/\beta} (\log n)^{\eta \kappa}) \quad  a.s.   \, .
\end{equation}

Clearly, \eqref{steptwo} will follow from  the Kronecker lemma, if one can prove that
\beq \label{krosteptwo}
\sum_{\ell \geq 1}   \frac{1}{ 3^{ \ell/\beta} \ell^{\eta \kappa}} \sum_{k= 3^{\ell-1}+1}^{3^\ell} \Vert X_{ \ell , k }  - {\tilde X}_{\ell , k}  \Vert_1  < \infty \, .   
\eeq
We claim that \beq \label{boundofthedelta1}
\Vert X_{\ell, k}  - {\tilde X}_{\ell , k}  \Vert_1 \leq  2 \vert \psi \vert_{\infty}  \bP(T \geq  m_{\ell}) \, .
\eeq
Then, using \eqref{boundofthedelta1},
\[
\sum_{\ell \geq 1}   \frac{1}{ 3^{ \ell/\beta} \ell^{\eta \kappa}} \sum_{k= 3^{\ell-1}+1}^{3^\ell} \Vert X_{ \ell , k }  - {\tilde X}_{\ell , k}  \Vert_1 \leq 2 \vert \psi \vert_{\infty}
\sum_{\ell \geq 1}   \frac{3^{\ell}}{ 3^{ \ell/\beta} \ell^{\eta \kappa}}  \bP (T \geq  m_{\ell}) \, .  
\]
Therefore \eqref{krosteptwo} holds by using \eqref{evident} and Lemma \ref{lem:nngh} (as quoted right after \eqref{evident}). 

It remains to prove the claim  \eqref{boundofthedelta1}. This follows closely the proof of 
\cite[Lem.~24]{CDM17}. Indeed, we can write
\[
X_{\ell , k} =   h_{\ell} (g_k, g_{k+1}, \ldots, g_{k+m_{\ell}}  )  \, , 
\]
where $h_{\ell} $ is a measurable function such that $\vert h_{\ell} \vert_{\infty}  \leq \vert  \psi \vert_{\infty} $ and $\bP_{\Omega} ( h_{\ell} ) = 0$. Hence 
\[
 X_{\ell, k }  - {\tilde X}_{\ell , k}  = h_{\ell}   (g_k, g_{k+1}, \ldots, g_{k+m_{\ell}} )   - \bE  \big (  h_{\ell}   (g_k, g_{k+1}, \ldots, g_{k+m_{\ell}} )  | \eps_{k - m_{\ell}} , \ldots,  \eps_{k + m_{\ell}} \big )   \, .
\] 
Recall that for all $k \geq 1$, $g_k = U ( g_{k-1} , \eps_k) $ where $U$ is a measurable function from $S \times {\mathcal A}$ to $S$. 
For any $i \geq 1$, let then $U_{i} $ be the function from $S \times {\mathcal A}^{\otimes i}$ to $S$ defined in the following iterative way: 
\[
U_1 = U \quad \text{and} \quad U_i ( a, x_1, x_2, \ldots, x_i) = U_{i-1}  \big ( U (a, x_1), x_2, \ldots, x_i  \big ) \, , \,  i \geq 2 \, . 
\]
Then for all $i \geq 0$ and $k \geq m_{\ell } +1$, 
\[
g_{k+i} = U_{i+m_{\ell }+1} ( g_{k -m_{\ell}-1} ,  \eps_{k-m_{\ell}}, \ldots, \eps_{k+i})  \, .
\]
Hence,
\begin{multline*}
h_{\ell}   (g_k, g_{k+1}, \ldots, g_{k+m_{\ell}} )  \\
= h_{\ell} \Big ( U_{m_{\ell }+1} ( g_{k -m_{\ell}-1} ,  \eps_{k-m_{\ell}}, \ldots, \eps_{k})  , \ldots ,  U_{2 m_{\ell }+1} ( g_{k -m_{\ell}-1} ,  \eps_{k-m_{\ell}}, \ldots, \eps_{k+m_{\ell }})   \Big ) \\
 =: H_{\ell, m_{\ell}} \big (  g_{k -m_{\ell}-1} ,  \eps_{k-m_{\ell}}, \ldots ,  \eps_{k+m_{\ell }}   \big ) \, .
\end{multline*}
Let now $(  \eps'_k )_{k \geq 1}$ be an independent copy of $( \eps_k )_{k \geq 1}$, independent of $g_0$. Let \(g_0'\) be a random variable in \(S\) with distribution $\nu$ and 
independent from $( g_0, ( \eps_k )_{k \geq 1}, ( \eps'_k )_{k \geq 1})$. Define a Markov chain
$(g'_n )_{n \geq 0}$ by
\[
  g'_{n+1}= U(g_n', \eps'_{n+1})
  \ \text{ for } \ n \geq 0
 \,  .
\]
Denoting $V_{k, m_{\ell}} = (g_0, \eps_1, \ldots, \eps_{k+m_{\ell }})$ and $ \bE_{V_{k, m_{\ell}}}  ( \cdot ) =  \bE ( \cdot | V_{k, m_{\ell}})$, we have
\begin{multline*}
 X_{\ell, k }  - {\tilde X}_{\ell , k}  = \bE_{V_{k, m_{\ell}}}  \Big ( H_{\ell, m_{\ell}} \big (  g_{k -m_{\ell}-1} ,  \eps_{k-m_{\ell}}, \ldots ,  \eps_{k+m_{\ell }}   \big ) \Big )
 \\  - 
 \bE_{V_{k, m_{\ell}}}  \Big ( H_{\ell, m_{\ell}} \big (  g'_{k -m_{\ell}-1} ,  \eps_{k-m_{\ell}}, \ldots ,  \eps_{k+m_{\ell }}   \big ) \Big )\, .
\end{multline*}
Hence, using the stationarity,
\begin{multline*}
\Vert  X_{\ell, k }  - {\tilde X}_{\ell , k} \Vert_1  \leq \Vert H_{\ell, m_{\ell}} \big (  g_{k -m_{\ell}-1} ,  \eps_{k-m_{\ell}}, \ldots ,  \eps_{k+m_{\ell }}   \big ) -
 H_{\ell, m_{\ell}} \big (  g'_{k -m_{\ell}-1} ,  \eps_{k-m_{\ell}}, \ldots ,  \eps_{k+m_{\ell }}   \big ) \Vert_1 \\
= \Vert H_{\ell, m_{\ell}} \big (  g_{0} ,  \eps_{1}, \ldots ,  \eps_{2m_{\ell } +1}   \big ) -H_{\ell, m_{\ell}} \big (  g'_{0} ,  \eps_{1}, \ldots ,  \eps_{2m_{\ell } +1}   \big ) \Vert_1 \, .
\end{multline*}
Let $(g^*_n )_{n \geq 0}$ be the Markov chain in the definition of the meeting time,
see \eqref{defMCstar}. Then
\begin{multline*}
\Vert  X_{ \ell , k }  - {\tilde X}_{\ell , k} \Vert_1  \leq  \Vert H_{\ell, m_{\ell}} \big (  g_{0} ,  \eps_{1}, \ldots ,  \eps_{2m_{\ell } +1}   \big ) -H_{\ell, m_{\ell}} \big (  g^*_{0} ,  \eps_{1}, \ldots ,  \eps_{2m_{\ell } +1}   \big ) \Vert_1 \\
= \Vert  h_{\ell} (g_{m_{\ell} +1}, g_{m_{\ell} +2}, \ldots, g_{2m_{\ell} +1}  )  - h_{\ell} (g^*_{m_{\ell} +1}, g^*_{m_{\ell} +2}, \ldots, g^*_{2m_{\ell} +1}  )  \Vert_1 \, .
\end{multline*}
Recall that for every $k \geq  T $, $g_k = g_k^*$. Therefore
\[
\Vert  X_{ \ell , k }  - {\tilde X}_{\ell , k} \Vert_1 \leq 2 \vert h_{\ell} \vert_{\infty}  \bP(T \geq  m_{\ell} ) \, , 
\]
proving  \eqref{boundofthedelta1}. This ends the proof of step 2.

\medskip

\noindent \textit{Step 3.}  Setting ${\tilde S}_n := \sum_{\ell=1}^{ b_n -1} { \widetilde W}_{\ell, 3^{\ell} - 3^{\ell-1} }    +  { \widetilde W}_{b_n,  n - 3^{b_n-1}  } $,  the rest of the proof consists in showing that,  enlarging the underlying probability space if necessary, there exists a sequence $(N_i)_{i \geq 1}$ of iid centered Gaussian r.v.'s with variance 
$c^2 $ such that 
\begin{equation} \label{resultSAter*}
\sup_{k \leq n } \Big | {\tilde S}_k - \sum_{i=1}^k N_i \Big  | = o( n^{ \gamma} (\log n)^{\eta \kappa} ) \quad   a.s. 
\end{equation}
This can be achieved using the method of \cite{BLW14}.
Indeed the constructed $ {\tilde X}_{\ell , k} $ can be rewritten as 
\[
 {\tilde X}_{\ell , k}  := G_{\ell} (  \eps_{k - m_{\ell}} , \ldots,  \eps_{k + m_{\ell}}  ) \, , 
\]
where $G_{\ell}$ is a measurable function. So $ {\tilde X}_{\ell , k}$ is a measurable function of  $(  \eps_{k - m_{\ell}} , \ldots,  \eps_{k + m_{\ell}}  )$ instead of $ (  \eps_{k - m_{\ell}} , \ldots,  \eps_{k}  )$ as in \cite{BLW14}. However, this difference can be handled by only minor adjustments, mainly taking  $2 m_{\ell}$ instead of  $m_{\ell}$ in \cite{BLW14}. More precisely, the blocks $B_{\ell,j}$ in \cite{BLW14} can be defined as follows:
for 
$\ell \geq k_0 := \inf \{ k \geq 1: m_k \leq 4^{-1} 3^{k-2} \}$
and $j=1, \ldots, q_\ell : =   \lceil 3^{\ell-2} /m_\ell \rceil -2 $,
\[
B_{\ell,j} =  \sum_{i=1+ (6j-1) m_\ell }^{ (6 j +5) m_\ell} {\tilde X}_{\ell,i +m_\ell +3^{\ell-1}} \, . 
\]
Define, for $j \geq 1$,
\[
{\mathcal J}_{\ell,j} = \{ 3^{\ell-1} +  (6j-1) m_\ell +  k , k=1,2, \ldots, 2m_\ell\}  \, , 
\]
\[
{\mathbf U}_{\ell,j} = (\eps_{i}, i \in {\mathcal J}_{\ell,j}  ) \,  \text{ and } \, {\bf U} = ({\bf U}_{\ell,j}, j =1, \ldots, q_\ell+1)_{\ell=k_0}^{\infty} \, .
\]
Then
\begin{multline*}
B_{\ell,j} =  \sum_{i=1+ (6j-1) m_\ell }^{ (6 j +1) m_\ell} {\tilde X}_{\ell,i +m_\ell +3^{\ell-1}}  +  \sum_{i=1+ (6j +1) m_\ell }^{ (6 j +3) m_\ell} {\tilde X}_{\ell,i +m_\ell +3^{\ell-1}}  +  \sum_{i=1+ (6j+3) m_\ell }^{ (6 j +5) m_\ell} {\tilde X}_{\ell,i +m_\ell +3^{\ell-1}}   \\ := H_\ell \big ( {\mathbf U}_{\ell,j}, \{\eps_{i + 3^{\ell-1}}\}_{ 1+(6j +1) m_\ell  \leq i  \leq (6j +5) m_\ell} , {\mathbf U}_{\ell,j+1} \big ) 
\end{multline*}
On the set $\{{\bf U} = {\bf u} \}$,  $(B_{\ell,j} ( {\bf u} ))_{j=1, \ldots, q_\ell}$ are then independent between them. Then, following \cite{BLW14}, we use Sakhanenko's strong approximation \cite{Sa06} to get a bound for the approximation error between ${\tilde S}_n ( {\bf u} )$ and a Wiener process with variance depending on $ {\bf u}$. To get the unconditional ASIP, we use the arguments given in \cite[step~3.4]{BLW14}.  So, as it is summarized in \cite[Prop.~21]{CDM17}, we infer that \eqref{resultSAter*} will follow if one can prove that  there exists $ r \in (2, \infty )$ such that 
\beq \label{condtoapplySakhanenko}
\sum_{\ell \geq k_0} \frac { 3^{\ell} }{  3^{\ell r /\beta} \ell^{  \eta \kappa r } m_\ell}   \bE \Big (  \max_{1 \leq k  \leq  6m_\ell }  \big |   {\widetilde W}_{\ell , k }    \big |^r \Big ) < \infty \, ,
\eeq
and
\beq \label{cond2v_k}
3^\ell (  \nu_\ell^{1/2}  - c)^2 = o(3^{2\ell/ \beta} \ell^{ 2  \eta \kappa  }  (\log \ell)^{-1})  \, , \, \mbox{ as $\ell \rightarrow \infty$}\, ,  
\eeq
where 
\beq \label{defnuk}
\nu_\ell = (2m_\ell)^{-1}  \big \{ \bE ({\widetilde W}^2_{\ell, 2m_\ell} )  + 2 \bE ({\widetilde W}_{\ell, 2m_\ell} ({\widetilde W}_{\ell, 4m_\ell}  -{\widetilde W}_{\ell, 2m_\ell} ))\big \} \, .
\eeq
To end the proof, it remains to prove the two conditions above. 
We start with \eqref{condtoapplySakhanenko}. Note first that for all $r \geq 1$, 
\[
\Big \Vert \max_{1 \leq k  \leq  6m_\ell }  \big |  { W}_{ k }  -  {\widetilde W}_{\ell , k }    \big |  \Big \Vert_r \leq  \sum_{k=1+3^{\ell-1}}^{ 6 m_{\ell}  +  3^{\ell-1}} \Vert  X_k - {\tilde X}_{\ell, k } \Vert_r  \, .
\]
Using that $\Vert X_k \Vert_{\infty} \leq \vert \psi \vert_{\infty}$ and $ \Vert {\tilde X}_{\ell, k } \Vert_{\infty} \leq 2 \vert \psi \vert_{\infty}$, we get 
\[
\Big \Vert \max_{1 \leq k  \leq  6m_\ell }  \big |  { W}_{ k }  -  {\widetilde W}_{\ell , k }    \big |  \Big \Vert_r \leq  ( 3 \vert \psi \vert_{\infty})^{(r-1)/r}  \sum_{k=1+3^{\ell-1}}^{ 6 m_{\ell}  +  3^{\ell-1}} \big (  \Vert  X_k - { X}_{\ell, k } \Vert^{1/r}_1 + \Vert  X_{\ell, k} - {\tilde X}_{\ell, k } \Vert^{1/r}_1  \big )  \, .
\]
But according to \eqref{boundofthedelta1first} and \eqref{boundofthedelta1}, for all $\alpha \geq 1$, 
\beq \label{boundofthedelta1bis}
\Vert X_{k}  - {\tilde X}_{\ell , k}  \Vert_1 \ll m^{-\alpha/2}_{\ell} +  \bP  ( T \geq  [m_{\ell}/\alpha] )  \, ,
\eeq
where the constant involved in $\ll$ does not depend on $k$ and $\ell$.
Therefore, for all $r \geq 1$ and $\alpha \geq 1$, 
\begin{multline*}
\sum_{\ell \geq k_0} \frac { 3^{\ell} }{  3^{\ell r /\beta} \ell^{  \eta \kappa r } m_\ell}   \bE \Big (  \max_{1 \leq k  \leq  6m_\ell }  \big | W_k -   {\widetilde W}_{\ell , k }    \big |^r \Big )
 \ll \sum_{\ell \geq k_0} \frac { 3^{\ell}  m^r _\ell}{  3^{\ell r /\beta} \ell^{  \eta \kappa  r } m_\ell} \big ( m^{-\alpha/2}_{\ell} +  \bP  ( T \geq  [m_{\ell}/\alpha] ) \big ) \\
  \ll \sum_{\ell \geq k_0} \frac { 3^{\ell (\beta -1)/\beta }  }{   \ell^{  \eta \kappa  }}  3^{- \alpha \ell / ( 2 \beta ) }  \ell^{ - \alpha \eta \kappa /2} + \sum_{\ell \geq k_0} \frac { 3^{\ell (\beta -1)/\beta }  }{   \ell^{  \eta \kappa  }}  \bP  ( T \geq  3^{\ell / \beta } \ell^{ \eta \kappa}/\alpha )  \, . 
\end{multline*}
The first term in the right-hand side is finite  provided that we take $\alpha > 
2 ( \beta -1)$ whereas, the second series  converge for any $\alpha \geq 1$,  by using once again \eqref{evident} and Lemma \ref{lem:nngh}. Therefore, to prove \eqref{condtoapplySakhanenko}, it suffices to show 
that there exists $ r \in ]2, \infty [$ such that 
\beq \label{condtoapplySakhanenko2}
\sum_{\ell \geq k_0} \frac { 3^{\ell} }{  3^{\ell r /\beta} \ell^{ \eta \kappa r } m_\ell}   \bE \Big (  \max_{1 \leq k  \leq  6m_\ell }  \big |  W_k   \big |^r \Big ) < \infty \, .
\eeq

By Lemma \ref{lem:nngh}, $\bE (T) < \infty$ since $\beta >2$ for both maps. Using stationarity and Proposition \ref{propRosenthal}, we get that for any $r \geq 2$,
\begin{multline*} 
\sum_{\ell \geq k_0} \frac { 3^{\ell} }{  3^{\ell r /\beta} \ell^{  \eta \kappa r } m_\ell}   \bE \Big (  \max_{1 \leq k  \leq  6m_\ell }  \big |  W_k   \big |^r \Big )  \\  \ll
\sum_{\ell \geq k_0} \frac { 3^{\ell} }{  3^{\ell r /\beta} \ell^{ \eta \kappa r } }  m_\ell^{r/2-1}+
\sum_{\ell \geq k_0} \frac { 3^{\ell} }{  3^{\ell r /\beta} \ell^{ \eta \kappa r } }   \sum_{i=1}^{ [ 6 \kappa_3 m_{\ell}] } i^{r-2} \bP ( T  \geq  i  ) \, .
\end{multline*} 
Since $m_{\ell} = [3^{\ell/\beta} \ell^{ \eta \kappa}]$, the first term of the right-hand side is finite provided that we take $ r > 2 ( \beta -1)$. To control the second term, we note that for any $r > \beta$, by a change of variables, 
\[
\sum_{\ell \geq k_0} \frac { 3^{\ell} }{  3^{\ell r /\beta} \ell^{ (1+ \eta) r } }   \sum_{i=1}^{ [ 6 \kappa_3 m_{\ell}] } i^{r-2} \bP ( T  \geq  i  )  \ll 
\sum_{i \geq 2 } \frac{i^{\beta-2} }{(\log i )^{ \eta \kappa r} } \bP ( T  \geq  i  )  
\]
which is finite by Lemma \ref{lem:nngh} as it was quoted after \eqref{evident}. So, provided that we take $ r > 2 ( \beta -1)$, since $\beta >2$,  \eqref{condtoapplySakhanenko2} holds (and then \eqref{condtoapplySakhanenko}). 



\smallskip

We turn now to the proof of \eqref{cond2v_k}. Proceeding as to get the relation \cite[(66)]{CDM17}, we have
\[
\nu_{\ell} =  {\tilde c}_{\ell, 0}  + 2 \sum_{k =1}^{2m_\ell }  {\tilde c}_{\ell,k}  \, , 
\]
where, for any $i \geq 0$, 
\[
{\tilde c}_{\ell,i}= \Cov ({\tilde X}_{\ell, m_\ell+1} , {\tilde X}_{\ell,i+m_\ell+1} ) \, .
\]
Note also that since $c^2$ is assumed to be positive, to prove \eqref{cond2v_k}, it suffices to prove that 
\beq \label{cond2v_kbis}
3^\ell (  \nu_\ell - c^2)^2 = o(3^{2\ell/ \beta} \ell^{ 2\eta \kappa }  (\log \ell)^{-1})  \, , \, \mbox{ as $\ell \rightarrow \infty$}\, . 
\eeq
To show that \eqref{cond2v_kbis} is satisfied, we first note that, by stationarity, for all $i \geq 0$, 
\begin{multline*}
\big |  {\tilde c}_{\ell,i} - \Cov  ( X_0, X_i ) \big |  = \big | \Cov ({\tilde X}_{\ell, m_\ell+1}  - X_{m_\ell+1}, {\tilde X}_{\ell,i+m_\ell+1} )   +  \Cov  ( X_{m_\ell+1} , 
{\tilde X}_{\ell,i+m_\ell+1}  - X_{i+m_\ell+1} ) \big | \\
\leq 2 \vert \psi \vert_{\infty} \big ( \Vert {\tilde X}_{\ell, m_\ell+1}  - X_{m_\ell+1} \Vert_1+ \Vert  {\tilde X}_{\ell, i+m_\ell+1}  - X_{i+m_\ell+1} \Vert_1 \big ) \, .
\end{multline*}
Let $\alpha \geq 1$. Then, according to \eqref{boundofthedelta1bis}, for all $i \geq 0$,
\[
\big |  {\tilde c}_{\ell,i} - \Cov  ( X_0, X_i ) \big |   \ll m^{-\alpha/2}_{\ell} +  \bP  ( T \geq  [m_{\ell}/\alpha] )  \, .
\]
It follows that
\[
| \nu_{\ell}  -  c^2 | \ll m^{1-\alpha/2}_{\ell} + m_{\ell} \bP  ( T \geq  [m_{\ell}/\alpha] )  + 2 \sum_{i > 2m_{\ell}}  \big | \Cov  ( X_0, X_i ) \big | \, .
\]

Recall that $\beta > 2$. By Lemma~\ref{lem:nngh} (since $\kappa \beta >1$), 
\[
\bP ( T \geq n )  = o \big (  ( \log n )^{\eta \kappa \beta } n ^{1-\beta}\big ) \, ,  \, \mbox{ as $n \rightarrow \infty$} \, .
\]
Using, in addition, Lemma~\ref{lmacovtildeX}, we derive  that for all $\alpha \geq 1$, 
\[
| \nu_{\ell}  -  c^2 | \ll 3^{ \ell (2-\alpha)/ ( 2 \beta) }  \ell^{\eta \kappa(2-\alpha)/2 }+ o \big ( 3^{\ell  (2 - \beta) /{\beta} } \ell^{ 2\eta \kappa} \big )   \, ,
\]
proving \eqref{cond2v_kbis} (and then \eqref{cond2v_k}) using the fact that $\beta >2$ and taking $\alpha \geq 2\beta  - 2$.  This ends the proof of Theorem \ref{ThASIPLSV} 
when $c^2 >0$.

\subsection{Extension to other observables} \label{piecewise-holder}
As already mentioned in Remark \ref{rmk:1-6}, it is possible to relax the H\"older continuity assumption. 
For instance, if $m\ge 1$ is an 
integer, assume that  $\varphi$ is H\"older on the interior of $Y_{a_0\cdots a_{m-1}}$ for every  $a_0,\ldots, a_{m-1}\in \alpha$. Denote by $\alpha(a_0,\ldots, a_{m-1})$ the corresponding H\"older exponent 
and by $\vert \varphi \vert_{\alpha(a_0,\ldots, a_{m-1})}$ the corresponding H\"older norm. Assume further that 
$\alpha^*:=\inf_{a_0,\ldots , a_{m-1}\in \alpha}\alpha(a_0,\ldots, a_{m-1})>0$ and that 
$ \vert \varphi \vert_{\alpha*}:=\sup_{a_0,\ldots , a_{m-1}\in \alpha} \vert \varphi \vert_{\alpha(a_0,\ldots, a_{m-1})}<\infty$. 
Under the above assumptions, the conclusion of  Theorem \ref{ThASIPLSV} holds. 
\medskip

Let us briefly give the arguments explaining why such an extension is possible. We just give the necessary arguments to prove the estimate \eqref{boundofthedelta1first} (or more generally Proposition~\ref{prop:boundofthedelta1first}). Similar arguments may be used at each place where the H\"older property has been used  to get similar estimates as \eqref{boundofthedelta1first}.  To do so one has to bound

\begin{equation}\label{holder-bound}
  |\psi(g_0, \ldots, g_n, g_{n+1}, \ldots) - \psi(g_0, \ldots, g_n, ({\tilde g}_{k})_{k \geq n+1}, \ldots)|
\end{equation} 

 If $\#\{k \leq n \colon g_k  \in S_0\}<m$  we bound \eqref{holder-bound} by $2|\varphi|_\infty$ .

\medskip

Assume now  that $\#\{k \leq n \colon g_k  \in S_0\}\ge m$. Set $g_0=(w_0,\ell_0)$. Assume that we can write that 
$w_0=ww'$ with $h(w)=\ell_0$ and $w$ may be an emptyword (in which case $\ell_0=0$). Hence, $\pi(g_0, \ldots, g_n, g_{n+1}, \ldots) $ and 
$\pi(g_0, \ldots, g_n, ({\tilde g}_{k})_{k \geq n+1}, \ldots) $ belong to $Y$ and even, since $\#\{k \leq n \colon g_k  \in S_0\}\ge m$, 
to some $Y_{a_0\cdots a_{m-1}}$ (on which $\varphi$ is H\"older). In particular one may bound 
\eqref{holder-bound} by $\vert \varphi \vert_{\alpha*} \lambda^{- \alpha* \#\{k \leq n \colon g_k  \in S_0\}}$.

If $w_0$ cannot be written as above then, $\pi(g_0, \ldots, g_n, g_{n+1}, \ldots) $ and 
$\pi(g_0, \ldots, g_n, ({\tilde g}_{k})_{k \geq n+1}, \ldots) $ belongs to $[0,1/2)$ and we infer a 
similar bound.

So at the end, there exists $C>0$ depending on $|\varphi|_\infty$ and $ \vert \varphi \vert_{\alpha*}$, such that 
\begin{equation*}
  |\psi(g_0, \ldots, g_n, g_{n+1}, \ldots) - \psi(g_0, \ldots, g_n, ({\tilde g}_{k})_{k \geq n+1}, \ldots)|
\le C\theta^{ \#\{k \leq n \colon g_k  \in S_0\}-m}\, .
\end{equation*} 
The end of the proof of Proposition~\ref{prop:boundofthedelta1first} remains unchanged.

\section{Nonuniformly expanding  dynamical systems}
\label{sec:NUE}

We stated and proved Theorem~\ref{ThASIPLSV} for two particular families of maps.
In this section we extend our result to the class of nonuniformly expanding systems
which admit inducing schemes as in Young~\cite{Y99} with polynomially
decaying tails of return times.

\subsection{Nonuniformly expanding maps}

Let $X$ be a complete bounded separable metric space with the Borel $\sigma$-algebra.
Suppose that $f \colon X \to X$ is a measurable transformation which
admits an inducing scheme consisting of:
\begin{itemize}
  \item a closed subset $Y$ of $X$ with a \emph{reference} probability measure $m$ on $Y$;
  \item a finite or countable partition $\alpha$ of $Y$ (up to a zero measure set)
    with $m(a) > 0$ for all $a \in \alpha$;
  \item an integrable \emph{return time} function $\tau \colon Y \to \{1,2,\ldots\}$ which is constant
    on each $a \in \alpha$ with value $\tau(a)$ and
    $f^{\tau(a)}(y) \in Y$ for all $y \in a$, $a \in \alpha$.
    (We do not require that $\tau$ is the \emph{first} return time to $Y$.)
\end{itemize}

Define $F \colon Y \to Y$, $F(y) = f^{\tau(y)}(y)$.
We assume that there are constants $\kappa > 1$, $K > 0$ and $\eta \in (0,1]$
such that for each $a \in \alpha$ and all $x,y \in a$:
\begin{itemize}
  \item $F$ restricts to a (measure-theoretic) bijection from $a$ to $Y$,
    nonsingular with respect to the measure $m$;
  \item $d(F(x), F(y)) \geq \kappa d(x,y)$;
  \item $d(f^k(x), f^k(y)) \leq K d(F(x), F(y))$
    for all $0 \leq k \leq \tau(a)$;
  \item the inverse Jacobian $\zeta_a = \frac{dm}{dm \circ F}$ of
    the restriction $F \colon a \to Y$ satisfies
    \[
      \bigl| \log |\zeta_a(x)| - \log |\zeta_a(y)| \bigr|
      \leq K d(F(x), F(y))^\eta
      .
    \]
\end{itemize}
The map $f$ as above is said to be nonuniformly expanding.
It is standard \cite[Cor. p.~199]{AD}, \cite[Proof of Thm.~1]{Y99} that there is
a unique absolutely continuous $F$-invariant probability measure
$\mu_Y$ on $Y$ with $\frac1c \le d\mu_Y / dm\le c$ for some $ c>0$,
and the corresponding $f$-invariant probability measure $\mu$ on $X$.

We make an additional assumption, which is not part of the usual definition
of nonuniformly expanding maps,
but is straightforward to verify in examples.
Denote by $\cA$ the set of all finite words in the
alphabet $\alpha$ (not including the empty word) and set 
$Y_w:= \cap_{k=0}^n F^{-k}(a_k)$ for $w=a_0\cdots a_n $ in $\cA$. We assume that
\begin{equation}\label{closure}
  m(Y_w)=m(\bar Y_w)\qquad \mbox{ for every $w\in \cA$.}
\end{equation}

We say that the return times of $f$ have:
\begin{itemize}
  \item a weak polynomial moment of order $\beta \geq 1$, if $m(\tau  \geq n) \ll n^{-\beta}$;
  \item a strong polynomial moment of order $\beta \geq 1$, if $\int \tau^\beta \, dm < \infty$.
\end{itemize}

\begin{rmk}
  Intermittent maps~\eqref{eq:LSV} and~\eqref{eq:HG} 
  are nonuniformly expanding. Their return times have respective weak and strong moments
  of order $\beta = 1/\gamma$.

  More generally, our results apply to nonuniformly expanding and nonuniformly
  hyperbolic dynamical systems which can be modelled by Young towers~\cite{Y98,Y99}.
  A notable example with polynomial return times is
  the class of non-Markov maps with indifferent fixed points in \cite[Sec.~7]{Y99}.
  (C.f.\ AFN maps in Zweim\"uller~\cite{Z98}.)
\end{rmk}


\subsection{Rates in the ASIP}
\label{sec:R-ASIP_NUE}

Suppose that $\varphi \colon X \to \bR$ is a H\"older continuous observable such that $\mu(\varphi)=0$. 
Let
$S_n(\varphi) = \sum_{k=0}^{n-1} \varphi \circ f^k$ be the corresponding random
process, defined on the probability space $(X, \mu)$. Assume in addition that the return times of $f$ have a polynomial moment of order $\beta >2$ (weak or strong). Let
\beq
  \label{eq:c2nomix}
  c^2 = \lim_{n \rightarrow \infty } \frac{1}{n} \int |S_n (\varphi )|^2 \, d\mu
  \, .
\eeq

\begin{rmk}
  The limit above exists by e.g.\ \cite[Cor.~2.12]{KKM}.
  In case of summable correlations, $c^2$ can be computed by
  the formula \eqref{defc2}, but in the setup of this section,
  $f$ may be non-mixing and the correlations may not decay.
\end{rmk}

The ASIP for $S_n(\varphi)$ with variance $c^2$ was first proved in~\cite{MN05}.
Prior to our work, the best available rates were due to \cite{CM15,KKM},
formulated for the \emph{strong} polynomial moment of order $\beta$:
\[
  S_n(\varphi) - W_n
  =
  \begin{cases} 
    o\big(n^{1/\beta}(\log n)^{1/2}\big)  & \beta \in(2,4), \\
    O\big(n^{1/4}(\log n)^{1/2}(\log\log n)^{1/4}\big)  & \beta \ge 4.
  \end{cases}
\]

Again those rates are not better than $O(n^{1/4})$. Our main result is:
\begin{thm} \label{thm:NUE}
  Suppose that the return times of $f$ have a weak polynomial moment of order $\beta > 2$.
  Then $S_n(\varphi)$ satisfies the ASIP  with variance $c^2$ given by \eqref{eq:c2nomix}
  and rate $o(n^{ 1/\beta} (\log n)^{ 1/ \beta  + \eps} )$ for all $\eps >0$. 
  Further, if the return times of $f$ have a strong polynomial moment of order $\beta >2$,
  then the rate is $o(n^{1/\beta})$.
\end{thm}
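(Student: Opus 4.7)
The plan is to replay the proof of Theorem~\ref{ThASIPLSV} (Sections~\ref{sec:RMC}--\ref{sec:proofASIP}) in the abstract nonuniformly expanding setting. Those three sections are written so that $F$ enters only through the four properties (full image, expansion by $\kappa$, bounded distortion, polynomial tail of $\tau$), together with a topological input ensuring that nested cylinders shrink to a single coded point. In the intermittent case this input is the elementary fact that the $Y_w$'s are nested intervals closed on the right; in the abstract case it is precisely the closure assumption~\eqref{closure}. The regenerative disintegration of Zweim\"uller \cite{Z09} and Korepanov \cite{K17} then produces Proposition~\ref{prop:nai} verbatim, with the tails of $h$ under $\bP_\cA$ inherited from the assumed polynomial tail of $\tau$ under $m$. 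The Lipschitz bound in Lemma~\ref{lem:proj} becomes $|\pi(a)-\pi(b)|\leq K\kappa^{-s(a,b)}$ by the hypothesis $d(f^k(x),f^k(y))\leq K d(F(x),F(y))$, with $K$ only affecting the H\"older constant of $\psi=\varphi\circ\pi$; the semiconjugacy and the identity $\pi_*\bP_\Omega=\mu$ are obtained by the same Carath\'eodory argument as in Proposition~\ref{prop:pim}.

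The main new difficulty is that in the abstract setting the Markov chain $(g_n)$ need not be aperiodic: if $d:=\gcd\{\tau(a):a\in\alpha\}>1$, then two independent stationary copies sit in distinct residue classes modulo $d$ with probability $(d-1)/d$, and the meeting time of Section~\ref{sec:MT} has infinite expectation. I would resolve this by passing to the iterate $f^d$, which admits the same inducing set $Y$ and induced map $F$ but with rescaled return times $\tau/d\in\bN$, whose $\gcd$ equals $1$ and whose polynomial tail of order $\beta$ (weak or strong) is preserved up to a constant. The measure $\mu$ splits into $d$ ergodic components for $f^d$ of equal $\mu$-mass $1/d$, cyclically permuted by $f$, with $Y$ lying inside one of them, call it $X_0$. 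Applying the aperiodic case of Theorem~\ref{thm:NUE} to $(f^d, X_0, \mu|_{X_0}/\mu(X_0))$ with the observable $\Phi:=\sum_{j=0}^{d-1}\varphi\circ f^j$ yields the ASIP for $S^{f^d}_n(\Phi)=S_{nd}(\varphi)$ with the claimed rate; H\"older continuity of the lift of $\Phi$ to the new Markov-chain space is secured by the same $\pi$-Lipschitz estimate combined with the abstract bound $d(f^j(x),f^j(y))\le K d(F(x),F(y))$ for intermediate iterates inside a tower block. The ASIP for $S_n(\varphi)$ itself then follows by $f$-symmetry (via the measure-preserving bijections $f\colon(X_{i-1},\mu_{i-1})\to(X_i,\mu_i)$, where $\mu_i=d\mu|_{X_i}$) and a standard Brownian interpolation, since $|S_n(\varphi)-S_{d\lfloor n/d\rfloor}(\varphi)|\leq d|\varphi|_\infty$ and the Brownian increments over intervals of length at most $d$ are $O(\sqrt{\log n})$ almost surely, both of which are $o(n^{1/\beta}(\log n)^{1/\beta+\eps})$.

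Once aperiodicity is secured, Sections~\ref{sec:MT} and~\ref{sec:proofASIP} apply with no change: Proposition~\ref{prop:boundofthedelta1first}, Lemma~\ref{lmacovtildeX}, Lemma~\ref{lmaFN} and Proposition~\ref{propRosenthal} all depend only on the H\"older property of the lifted observable, on the tail of the meeting time $T$ controlled by Lemma~\ref{lem:nngh}, and on the i.i.d.\ structure of the innovations; the Sakhanenko-based block approximation of Berkes-Liu-Wu and Cuny-Dedecker-Merlev\`ede is then identical. The identification of the variance with \eqref{eq:c2nomix} is immediate from Corollary~\ref{cor:cov} (applicable since $\bE(T)<\infty$ when $\beta>2$), finishing the proof.
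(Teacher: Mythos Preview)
Your strategy is essentially the paper's, and most of what you write is correct: the regenerative disintegration, the Lipschitz and semiconjugacy properties of $\pi$, and the verbatim reuse of Sections~\ref{sec:MT}--\ref{sec:proofASIP} all carry over as you describe. The paper likewise reduces to an aperiodic chain and then transfers the ASIP back; the only substantive difference is where the reduction is performed.

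There is, however, a gap in your treatment of periodicity. You claim that if $d=\gcd\{\tau(a):a\in\alpha\}>1$ then $\mu$ splits into exactly $d$ ergodic components for $f^d$, cyclically permuted by $f$, with $Y$ inside one of them. This is false in the generality of the paper, which explicitly allows $\tau$ not to be the first return time. A concrete counterexample: $X=[0,1]$, $f(x)=2x\bmod 1$, $Y=X$, $\tau\equiv 2$, $F=f^2$, $\alpha$ the four quarter-intervals. All NUE hypotheses and~\eqref{closure} hold, $d=2$, yet $\mu=\Leb$ is $f^2$-ergodic and does not split. In general the number of $f^d$-ergodic components of $\mu$ is some $d'\mid d$ which may be $1$, and your $f$-symmetry argument (with $\mu_i=d\,\mu|_{X_i}$ on disjoint sets $X_i$) breaks down. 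The paper avoids this by carrying out the periodic reduction at the level of the Markov chain $\Omega$ rather than on $X$: there the decomposition $\Omega=\Omega_0\sqcup\cdots\sqcup\Omega_{p-1}$ with $p=\gcd\{h(w)\}$ is always exact, the induced chain on $\tS_0$ is aperiodic, and a short appendix (Lemma~\ref{prop:ato} and Corollary~\ref{cor:perper}) transfers the ASIP from $(\Omega_0,\sigma^p)$ to $(\Omega,\sigma)$ via the deterministic projection $\pi_0\colon\Omega\to\Omega_0$, after which the semiconjugacy $\pi$ pushes everything down to $(X,\mu)$. This is cleaner than repairing your argument, which would require identifying $d'$ and matching the $f^d$-invariant measure produced by the new inducing scheme with the correct ergodic component of $\mu$.

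A minor omission: you do not treat the degenerate case $c^2=0$. Section~\ref{sec:proofASIP} assumes $c^2>0$ (it is used in~\eqref{cond2v_k}/\eqref{cond2v_kbis}), and the boundedness argument of Remark~\ref{cob} is specific to the intermittent maps. The paper handles $c^2=0$ in the general setting separately, via Proposition~\ref{prop:cobas} and the martingale--coboundary decomposition of~\cite{KKM}.
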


In the remainder of this Section we prove Theorem~\ref{thm:NUE}.
First we consider the special case when $c^2=0$.

\begin{prop}
  \label{prop:cobas}
  Suppose that the return times of $f$ have a weak polynomial moment of order $\beta$.
  Then on the probability space $(Y, \mu_Y)$,
  \[
    \max_{k \leq n} \tau \circ F^k
    = o (n^{1/\beta} (\log n)^{1/\beta + \eps})
    \quad \text{almost surely for all } \eps > 0 \, .
  \]
  With a strong polynomial moment of order $\beta$,
  \[
    \max_{k \leq n} \tau \circ F^k
    = o (n^{1/\beta})
    \quad \text{almost surely.}
  \]
\end{prop}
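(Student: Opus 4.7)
The plan is to reduce the statement to a Borel--Cantelli argument on the stationary sequence $\tau \circ F^k$, exploiting that $\mu_Y$ and $m$ are equivalent with bounded Radon--Nikodym derivative, so the tails $\mu_Y(\tau > t)$ and $m(\tau > t)$ differ only by a multiplicative constant. Since $F$ preserves $\mu_Y$, for every $k \geq 0$ the random variable $\tau \circ F^k$ has the same $\mu_Y$-law as $\tau$. I will treat the weak and strong moment cases separately.

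For the strong polynomial moment case, the observation $\int \tau^\beta\,dm < \infty$ gives $\sum_{n \geq 1} \mu_Y(\tau > \delta n^{1/\beta}) < \infty$ for every $\delta > 0$, by the standard layer-cake identity $\mathbb{E}_{\mu_Y}[\tau^\beta] = \sum_{n \geq 1} \mu_Y(\tau^\beta > n)$. By stationarity and Borel--Cantelli applied to the events $\{\tau \circ F^n > \delta n^{1/\beta}\}$, we get $\limsup_n \tau \circ F^n / n^{1/\beta} \leq \delta$ almost surely, hence $\tau \circ F^n = o(n^{1/\beta})$ a.s. To pass from the individual term to the maximum, fix $\varepsilon > 0$ and a random $K = K(\omega)$ such that $\tau \circ F^k \leq \varepsilon k^{1/\beta}$ for all $k \geq K$; then
\[
  \max_{k \leq n} \tau \circ F^k
  \leq \max_{k \leq K} \tau \circ F^k + \varepsilon n^{1/\beta},
\]
and since the first term is finite and independent of $n$, dividing by $n^{1/\beta}$ and sending $n \to \infty$ gives $\limsup \leq \varepsilon$, whence $o(n^{1/\beta})$.

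For the weak polynomial moment case, set $a_n = n^{1/\beta}(\log n)^{1/\beta + \varepsilon'}$ for some $\varepsilon' \in (0,\varepsilon)$ to be fixed. Take the geometric subsequence $n_j = 2^j$ and bound, by the union bound and the tail estimate $\mu_Y(\tau \geq t) \ll t^{-\beta}$,
\[
  \mu_Y\Bigl(\max_{k \leq n_j} \tau \circ F^k > a_{n_j}\Bigr)
  \leq n_j \mu_Y(\tau > a_{n_j})
  \ll n_j \cdot a_{n_j}^{-\beta}
  = (j \log 2)^{-1-\beta\varepsilon'},
\]
which is summable in $j$ since $\beta \varepsilon' > 0$. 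By Borel--Cantelli, $\max_{k \leq n_j} \tau \circ F^k \leq a_{n_j}$ eventually. For $n \in [n_j, n_{j+1})$ the maximum increases in $n$, so
\[
  \max_{k \leq n} \tau \circ F^k \leq a_{n_{j+1}} \leq 2^{1/\beta} a_{n_j} \cdot \bigl((j+1)/j\bigr)^{1/\beta + \varepsilon'},
\]
which gives $\max_{k \leq n} \tau \circ F^k = O(a_n)$ almost surely. Dividing by the slightly larger sequence $n^{1/\beta}(\log n)^{1/\beta + \varepsilon}$ yields a factor $(\log n)^{\varepsilon' - \varepsilon} \to 0$, proving the $o$ claim.

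The only subtle point (and thus the main obstacle, if any) is the distinction between the $O$ conclusion that a direct geometric subsequence Borel--Cantelli argument produces and the $o$ conclusion that the proposition requires; this is handled by the standard trick of using an intermediate exponent $\varepsilon' < \varepsilon$. The rest is routine: the tail estimates transfer from $m$ to $\mu_Y$ because $d\mu_Y/dm$ is bounded, stationarity lets us treat $\tau \circ F^k$ as identically distributed to $\tau$, and the union bound over $k \leq n_j$ is wasteful but is already compensated by the logarithmic correction built into $a_n$.
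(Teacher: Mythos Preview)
Your proof is correct and follows the same Borel--Cantelli strategy as the paper: stationarity of $\tau\circ F^k$ under $\mu_Y$, transfer of tail bounds from $m$ to $\mu_Y$ via bounded density, and then the standard passage from ``$\tau\circ F^n$ eventually small'' to the maximum.

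The only difference worth noting is in the weak-moment case, where your geometric-subsequence and intermediate-exponent $\eps'<\eps$ detour is unnecessary. The paper simply checks directly that
\[
  \sum_{n\geq 1} m\bigl(\tau > \delta\, n^{1/\beta}(\log n)^{1/\beta+\eps}\bigr)
  \;\ll\; \sum_{n\geq 2} \frac{1}{n(\log n)^{1+\beta\eps}}
  \;<\;\infty
\]
for every $\delta>0$, so by stationarity and Borel--Cantelli one gets $\tau\circ F^n \leq \delta\, n^{1/\beta}(\log n)^{1/\beta+\eps}$ eventually, and the step up to the maximum is exactly the one you wrote out in the strong case. This avoids both the union bound over $k\leq n_j$ and the $\eps'$ trick.
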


\begin{proof}
  The sequence $(\tau \circ F^n)_{n \geq 0}$ is stationary
  and by the Borel-Cantelli lemma,
  it suffices to check that for all $\delta>0$, 
  \[
   \sum_{n\geq 1} \mu_Y \left ( \tau >  \delta n^{ 1/\beta} (\log n)^{ 1/ \beta  + \eps} \right )
   < \infty \, .
  \]
  Since $d \mu_Y / d\mu$ is bounded, it is enough to verify that
  \[
   \sum_{n\geq 1} m \left ( \tau >  \delta n^{ 1/\beta} (\log n)^{ 1/ \beta  + \eps} \right )
   < \infty \, ,
  \]
  which follows immediately from our assumptions.
  
  The proof for the strong polynomial moments is similar.
  (See also~\cite[Prop.~2.6]{KKM}.)
\end{proof}

\begin{cor}
  Theorem~\ref{thm:NUE} holds when $c^2 = 0$.
\end{cor}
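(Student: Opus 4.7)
The plan is to note that when $c^2=0$, a Brownian motion of variance $c^2$ is identically zero, so the ASIP reduces to the deterministic almost sure bound $S_n(\varphi)=o(n^{1/\beta}(\log n)^{1/\beta+\varepsilon})$ in the weak case and $S_n(\varphi)=o(n^{1/\beta})$ in the strong case. So it suffices to prove these almost sure bounds on $(X,\mu)$. I will deduce them from a coboundary argument combined with Proposition~\ref{prop:cobas}.

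The first step is the classical observation that in the nonuniformly expanding setting the vanishing of the asymptotic variance forces $\varphi$ to be a coboundary on the induced system. Introduce the induced observable
\[
  \hat\varphi(y) \;=\; \sum_{k=0}^{\tau(y)-1}\varphi(f^k(y)),\qquad y\in Y,
\]
which is H\"older on each atom $a\in\alpha$ under our regularity assumptions on $\varphi$ and on the inducing scheme. A standard argument (e.g.\ using the spectral gap of the transfer operator of $F$ on a suitable function space, as in Gou\"ezel or in \cite{MN05,KKM}) shows that $c^2=0$ forces $\hat\varphi = v-v\circ F$ $\mu_Y$-a.e.\ for some bounded measurable $v\colon Y\to\bR$. (This is the analogue of the result cited in Remark~\ref{cob}; in the present generality $u=u_\varphi$ may fail to be bounded on $X$, but its restriction to $Y$ is controlled via $v$.)

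Next, fix $x\in X$ for which the orbit visits $Y$ (a full $\mu$-measure set by ergodicity of $\mu$), let $k_0=k_0(x)$ be the first hitting time of $Y$, set $y=f^{k_0}(x)\in Y$, and let $N=N(n,x)$ be the largest integer such that $k_0+\tau(y)+\tau(Fy)+\cdots+\tau(F^{N-1}y)\le n$. Splitting the Birkhoff sum into the initial segment, complete excursions, and a terminal segment gives
\[
  S_n(\varphi)(x)\;=\;\sum_{k=0}^{k_0-1}\varphi(f^k(x))\;+\;\sum_{j=0}^{N-1}\hat\varphi(F^jy)\;+\;R_n(x),
\]
where the middle sum telescopes to $v(y)-v(F^Ny)$ and the remainder $R_n(x)$ is an incomplete excursion, bounded by $|\varphi|_\infty\,\tau(F^Ny)$. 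Therefore
\[
  |S_n(\varphi)(x)|\;\le\;k_0\,|\varphi|_\infty \;+\; 2|v|_\infty \;+\; |\varphi|_\infty\max_{0\le j\le n}\tau\circ F^j(y).
\]

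The conclusion then follows directly: $k_0$ and $|v|_\infty$ are finite constants independent of $n$, and Proposition~\ref{prop:cobas} controls $\max_{0\le j\le n}\tau\circ F^j$ by $o(n^{1/\beta}(\log n)^{1/\beta+\varepsilon})$ (resp.\ $o(n^{1/\beta})$) $\mu_Y$-almost surely, and hence $\mu$-almost surely after pushing forward by a finite hitting-time shift, using that $d\mu_Y/d\mu$ is bounded above and below on $Y$. Taking the constantly zero process as the required Brownian motion yields the ASIP with variance $c^2=0$ at the claimed rate. The main obstacle is invoking the coboundary representation with bounded transfer function $v$ on $Y$; once that is granted, the rest is a routine telescoping together with Proposition~\ref{prop:cobas}.
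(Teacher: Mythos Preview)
Your argument follows the same route as the paper's: both recognise that with $c^2=0$ the approximating Brownian motion is identically zero, invoke a coboundary structure, and reduce the almost sure estimate to the control of $\max_{k\le n}\tau\circ F^k$ furnished by Proposition~\ref{prop:cobas}. The paper does this by quoting the martingale--coboundary decomposition of \cite{KKM} on the Young tower: when $c^2=0$ the martingale part vanishes (\cite[Cor.~2.12, Cor.~3.4]{KKM}), and the remaining coboundary term is bounded via return times exactly as in the proof of \cite[Prop.~2.6]{KKM}.

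The one genuine weak point in your write-up is the assertion that the induced transfer function $v$ is \emph{bounded} on $Y$. The spectral-gap argument you invoke for the Gibbs--Markov map $F$ applies to observables in the usual Banach space of bounded, locally H\"older functions; but $\hat\varphi(y)=\sum_{k<\tau(y)}\varphi(f^k y)$ is in general neither bounded nor of bounded local H\"older constant (both can be of order $\tau(y)$), so it need not lie in that space. Note too that boundedness of $v$ would force boundedness of $\hat\varphi=v-v\circ F$, which is a nontrivial consequence of $c^2=0$ that you have not established. The decomposition in \cite{KKM} sidesteps this: the coboundary $\chi$ lives on the tower, is \emph{not} assumed bounded, but is pointwise dominated by the height of the current column, whence $|\chi\circ f^n-\chi|\ll\max_{k\le n}\tau\circ F^k$ and Proposition~\ref{prop:cobas} finishes. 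Your telescoping is fully compatible with this once you trade the blanket claim $|v|_\infty<\infty$ for that weaker pointwise control.
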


\begin{proof}
  In~\cite{KKM}, the ASIP for nonuniformly expanding dynamical systems uses the 
  martingale-coboundary decomposition. With $c^2 = 0$, the martingale part 
  vanishes \cite[Cor.~2.12, Cor.~3.4]{KKM}. The estimates of the coboundary part
  are reduced to those in Proposition~\ref{prop:cobas}, see the proof of
  \cite[Prop.~2.6]{KKM}.
\end{proof}

\medskip

From here on, we assume that $c^2>0$.
We construct a Markov chain as in Section~\ref{sec:RMC}.
The general setup of nonuniformly expanding maps brings in
a few minor technical complications that we explain below:
\begin{itemize}
  \item Proposition~\ref{prop:nai} is the basis of the Markov chain construction,
    providing a ``regenerative'' decomposition of the reference measure.
    It is proved in the general setup in \cite{K17}.
  \item Construction of the semiconjugacy $\pi \colon \Omega \to X$ needs
    additional work, because we may not be able to define it everywhere as we did for the
    intermittent maps. Nevertheless, using assumption~\eqref{closure} we define it almost everywhere
    as follows.
    
    Let $\cA_n \subset \cA$ denote the set of all words with length $n+1$. Let
    \begin{align*}
      Z &= \cup_{n\in \N} \cup_{w\neq w'\in \cA_n}(\bar{Y}_w\cap \bar{Y}_{w'}) \, , \\
      \tilde{Y} &= (\cap_{n\in \N} \cup_{w\in \cA_n}Y_w) \setminus Z \, .
    \end{align*}
    Then $m(Z) = 0$ and $m(\tilde{Y}) = 1$, and 
    for every $y\in \tilde Y$ there exists a \emph{unique} sequence
    $(a_n)_{n\in \N} \in \alpha^{\N}$ such that $y\in \cap_{n\in \N}Y_{a_0 \cdots a_n}$.
    
    We endow $\alpha^{\N}$ with the metric 
    $\delta((a_n)_{n\in \N},(a_n')_{n\in \N})=\kappa^{-s}$,
    where $s \geq 0$ is the largest such that 
    $a_0 \cdots a_{s-1} = a'_0 \cdots a'_{s-1}$.
    Define a map $\chi \colon \alpha^{\N} \to Y$ by $\chi((a_n)_{n\in \N})=y$,
    where $\{y\}=\cap_{n\in \N}\bar Y_{w_n}$. It follows from completeness of $X$ and
    expansion of $F$ that $\chi$ is defined everywhere and is Lipschitz.
    Set $\cX:= \chi^{-1}(\tilde Y)$. Then $\cX$ is measurable and
    $\{\chi((a_n)_{n\in \N})\}=\cap_{n\in \N}Y_{a_0 \cdots a_n}$ for every $(a_n)_{n\in \N} \in \cX$.
  
    Every $g \in \Omega$ can be written as
    \[
      g = \bigl(
        (w_0, \ell_0), \ldots, (w_0, h(w_0)-1), (w_1,0), \ldots, (w_1, h(w_1)-1),
        (w_2, 0), \ldots
      \bigr)
      .
    \]
    Let \(\Omega_0 = \{g \in \Omega : \ell_0 = 0 \}\).    
    Define $\iota \colon \Omega_0 \to \alpha^\N$ by
    $\iota(g) = (a_0, a_1, \ldots)$ where $a_0 a_1 \cdots$ = $w_0 w_1 \cdots$.
    Define $\Omega'_0 =\iota^{-1} (\cX) $ and
    $\pi_0 \colon \Omega'_0 \to \tilde Y$, $\pi_0 = \chi \circ \iota$.
    Let
    \[
      \Omega' = \{ \sigma^\ell(g) : g \in \Omega'_0, \, 0 \leq \ell < h(w_0) \}
      \, .
    \]
    Observe that  $\bP_{\Omega} (\Omega') = 1$. Define a projection
    $\pi \colon \Omega' \to X$ by $\pi (g)= f^{\ell_0} (\pi_0 (g^{(0)}))$.
    Following the proof of Lemma~\ref{lem:proj} with straightforward changes, we see that
    $\pi$ is Lipschitz on $ \Omega'$.
\end{itemize}

\begin{rmk}
  Construction of the Markov chain for nonuniformly expanding dynamical
  systems can be found in~\cite{K17r}, done in different notation.
  There the space $X$ is not assumed complete, and 
  a more general, though less hands-on, assumption is used in place of \eqref{closure}:
  that the set
  \[
    \{ (a_0, a_1, \ldots) \in \alpha^\bN :
    \text{there exists }y \in Y \text{ with } F^k(y) \in a_k \text{ for all } k  \}
  \]
  is measurable in \(\alpha^{\bN}\) (in the product topology with Borel sigma algebra).
\end{rmk}

Further we work in notation of Section~\ref{sec:RMC}. Let
\[
  p = \gcd \{h(w) \colon w \in \cA\}
  .
\]
For the maps~\eqref{eq:LSV} and~\eqref{eq:HG} we showed that $p = 1$. This means that
the Markov chain $g_0, g_1, \ldots$ is aperiodic, which was necessary to control the
moments of the meeting time in Section~\ref{sec:MT}.
In the general case, however, it could be that $p \geq 2$. This is typical for example
for logistic maps with Collet-Eckmann parameters.

For $p = 1$, our proof proceeds without changes.
Below we treat the periodic case $p \geq 2$.  For $0 \leq k < p$, define
\[
  \tS_k = \{(w, \ell) \in S \colon \ell \equiv k \;(\bmod\; p)\}
\]
and
\[
  \Omega_k = \{(g_0, g_1, \ldots) \in \Omega \colon g_0 \in \tS_k\}
  .
\]
The sets $\Omega_k$ partition $\Omega$, and they are 
cyclically  permuted by $\sigma$: 
$\sigma(\Omega_k) = \Omega_{k+ 1 \;\bmod\; p}$.

Note that if $g_{n p} = (w, \ell) \in \tS_0$ for some $n \geq 0$,
then $g_{np+k} = (w, \ell + k)$ for $0 \leq k < p$.
Thus we can identify $\Omega_0$ with
\[
  \tOmega = \{(g_0, g_p, g_{2p} \ldots) \in \Omega \colon g_0 \in \tS_0\}
  .
\]

Let now $(\tg_0, \tg_1, \ldots)$ be a Markov chain with state space $\tS_0$ and transition probabilities
\begin{equation*}
  \begin{aligned}
  \bP (\tg_{n+1} = (w, \ell p) & \mid \tg_n = (w', \ell' p))
  \\ & = \begin{cases}
    1, & \ell = \ell' + 1 \text{ and } \ell' + 1 < h(w) /p  \text{ and } w=w' \\
    \bP_\cA(w) , & \ell = 0 \text{ and } \ell' + 1 = h(w') /p  \\
    0, & \text{else}
  \end{cases}
  \end{aligned}
\end{equation*}
This Markov chain admits a unique (ergodic) invariant probability measure $\tnu$ on $\tS_0$
given by
\beq \label{defnutilde}
\tnu ( w, \ell p) = p  \frac{ \bP_{\cA} (w) {\bf 1}_{\{0 \leq \ell  < h( w) /p  \}}}{   \bE_{\cA} (h)  }  \, .
\eeq
The Markov chain $(\tg_n)_{n \geq 0}$ starting from $\tnu$ defines a  probability measure $\bP_{\tOmega}$ on
the space \(\tOmega \). Note that $\bP_{\tOmega}$ corresponds to $\bP_\Omega$ conditioned on $\Omega_0$. Note also that  $\tg_0, \tg_1, \ldots$ is a Markov chain,
identical to $g_0, g_1, \ldots$ in structure except that
it is aperiodic and the return times
to \(S_0 = \{(w,\ell) \in S \colon \ell = 0\}\) are divided by $p$.

Following Section~\ref{sec:RMC}, we define the separation time $\ts$ and
the separation metric $\td$ on $\tOmega$, using the same constant $\lambda > 1$.
Suppose that $\ta,\tb \in \tOmega$
with the corresponding $a,b \in \Omega_0$.
The separation time is measured in terms of returns to $S_0$, hence
\[
  \ts(\ta, \tb) = s(a,b)
  \quad \text{and} \quad
  \td(\ta, \tb) = d(a,b)
\,   .
\]
Further, $d(\sigma^k (a), \sigma^k(b)) = d(a,b)$ for $0 \leq k < p$.
Let $\tpsi \colon \tOmega \to \bR$,
\[
  \tpsi(\ta) = \sum_{k=0}^{p-1} \psi(\sigma^k(a))
\,   .
\]
It follows that $\Lip \tpsi \leq p \Lip \psi$.
Also, $\tpsi$ is mean zero with respect to $\bP_\tOmega$. 

In Corollary \ref{cor:perper} of Appendix~\ref{sec:periodic} we show that the ASIP for
$\sum_{k=0}^{n-1} \psi \circ \sigma^k$ on $(\Omega, \bP_\Omega)$ (and hence the ASIP for $\sum_{k=0}^{n-1} \varphi \circ f^k$ on $(X, \mu)$) 
follows from the ASIP for
$\sum_{k=0}^{n-1} \tpsi \circ \tsigma^k$ on $(\tOmega, \bP_\tOmega)$
with the same rates and variance $v^2/p$, where $v^2$ is the
variance of the Wiener process on $({\tilde \Omega}, {\mathbb P}_{\tilde \Omega})$.

Now, as in Section~\ref{sec:proofASIP},  the  ASIP for
$\sum_{k=0}^{n-1} \tpsi \circ \tsigma^k$ on $(\tOmega, \bP_\tOmega)$ follows from the ASIP for $\sum_{k=0}^{n-1} {\tilde X}_k $ where, for any $k \geq 0$, 
\[
{\tilde X}_k = \tpsi \big (  (\tg_\ell)_{\ell \geq k}\big ) \, , 
\] 
and $(\tg_n)_{n \geq 0}$ is the stationary Markov chain defined above with the state space $\tS_0$ and stationary distribution $\tnu$. The proof of the ASIP for $\sum_{k=1}^n {\tilde X}_k $ with the adequate rates is, as in  Section~\ref{sec:proofASIP}, mainly based on suitable bounds for the tails of the meeting time ${\tilde T}$  for the Markov chain $(\tg_n)_{n \in \bN }$, which is defined as follows. First, without changing the distribution, we redefine $(\tg_n)_{ n \in \bN }$ on a new
probability space as follows. Let \(\tg_0 \in \tS_0\) be distributed according to \( \tnu\) (the stationary distribution defined by \eqref{defnutilde}).
Let \(\eps_1, \eps_2, \ldots\) be a sequence of independent
identically distributed random variables with values in \(\cA\),  distribution
\(\bP_\cA\) and independent from \(\tg_0\). For \(n \geq 0\), let
\[
  \tg_{n+1} 
  = {\tilde U}(\tg_n, \eps_{n+1}) \, , 
\]
where, for any $\ell \in \bN$, 
\beq \label{deftildeU}
  {\tilde U}((w, \ell p ), \eps)
  = \begin{cases}
    (w, (\ell+1)p), & \ell p < h(w) - p \, ,  \\
    (\eps, 0), & \ell p = h(w) - p \, .
  \end{cases}
\eeq
The meeting time ${\tilde T}$ of the Markov chain $(\tg_n)_{ n \in \bN }$ is then defined by 
\beq \label{defofTtilde}
  {\tilde T} = \inf \{n \geq 0 \colon \tg_n = {\tg_n}^* \} \, ,
\eeq
where $(\tg_n^*, n \in \bN )$ is the Markov chain defined as follows: \(\tg_0^*\) is a random variable in \(\tS_0\) with distribution $\tnu$ and 
independent from \( ( \tg_0, \{\eps_n\}_{n \geq 1} ) \) and, for $n \geq 0$,  $\tg_{n+1}^* = {\tilde U}(\tg_n^*, \eps_{n+1})$. Proceeding as in the proof of Lemma \ref{lem:nngh} and taking into account the  bounds on the tails of $h$  proved in \cite{K17}, we infer that the following lemma holds:

\begin{lemma} \label{lem:nnghtilde} \
\begin{itemize} 
  \item If the return times of $f$ have weak polynomial moment of order $\beta > 1$, then, for any $\eta >1$,  \(\bE (\tpsi_{\beta, \eta} ({\tilde T}) ) < \infty\), where $\tpsi_{\beta, \eta} (x) = x^{\beta-1} ( \log ( 1+x) )^{- \eta}$ for $x>0$. 
 \item If the return times of $f$ have strong polynomial moment of order $\beta > 1$, then  \(\bE({\tilde T}^{\beta-1} ) < \infty\). 
  \end{itemize}
\end{lemma}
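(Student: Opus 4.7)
The plan is to follow the proof of Lemma~\ref{lem:nngh} verbatim, replacing the chain $(g_n)$ by the aperiodic chain $(\tg_n)$ on $\tS_0$ with stationary law $\tnu$, and the ceiling $S_c$ by
\[
  \tilde S_c = \{(w, \ell p) \in \tS_0 \colon (\ell + 1) p = h(w)\}.
\]
Aperiodicity of $(\tg_n)$ is immediate: by the definition of $p$ as $\gcd\{h(w) \colon w \in \cA\}$, the return times $h(w)/p$ of the collapsed chain to $\tS_0$ have greatest common divisor one, so Lindvall's coupling theorem \cite{Li79} (as in \cite[Prop.~9.6]{Ri00}) is available. This is the one place where the construction of $p$ is really used, and it is the only genuinely new ingredient compared with Lemma~\ref{lem:nngh}.

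First I would reproduce the coupling set-up: introduce the chain $(\tg_n^*)$ sharing the innovations $(\eps_n)$ with $(\tg_n)$ and set
\[
  \tilde T^* = \inf\{ n \geq 0 \colon \tg_n \in \tilde S_c \text{ and } \tg_n^* \in \tilde S_c\}.
\]
From the definition \eqref{deftildeU} of $\tilde U$, it follows that $\tilde T \leq \tilde T^* + 1$, since upon a common visit to $\tilde S_c$ both chains jump to the same state $(\eps_{n+1}, 0)$. Moreover, before that event each innovation is consumed by at most one of the two chains, so $\tilde T^*$ has the same law as the analogous hitting time $\tilde T'$ built from two fully independent copies of $(\tg_n)$ with independent innovations. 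By Lindvall, bounding moments of $\tilde T'$ (and hence of $\tilde T$) reduces to bounding the corresponding moments of the first renewal time
\[
  \tilde \tau_0 = \inf\{n \geq 0 \colon \tg_n \in \tilde S_c\}.
\]

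Finally I would compute the renewal moments directly. Starting from $\tg_0 = (w, \ell p)$ the chain advances deterministically through $(w, (\ell + 1) p), \ldots, (w, h(w) - p)$, so $\tilde \tau_0 = h(w)/p - \ell - 1$. Averaging $\tpsi_{\beta,\eta}(\tilde \tau_0)$ against $\tnu$ given by \eqref{defnutilde} and using $\tpsi_{\beta,\eta}(h(w)/p - \ell - 1) \leq C_{\beta,\eta,p} h(w)^{\beta - 1} (\log h(w))^{-\eta}$ yields
\[
  \bE(\tpsi_{\beta,\eta}(\tilde \tau_0))
  \leq \frac{C_{\beta,\eta,p}}{\bE_\cA(h)} \sum_{w \in \cA} \bP_\cA(w) \frac{h(w)^\beta}{(\log h(w))^\eta}
  = \frac{C_{\beta,\eta,p}}{\bE_\cA(h)} \bE_\cA(\psi_{\beta,\eta}(h)),
\]
and analogously $\bE(\tilde \tau_0^{\beta - 1})$ is controlled by $\bE_\cA(h^\beta)$. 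The bounds on the tails of $h$ obtained in \cite{K17} state that, under a weak polynomial moment of order $\beta$ on $\tau$, $\bP_\cA(h \geq k) \ll k^{-\beta}$ (so that $\bE_\cA(\psi_{\beta,\eta}(h)) < \infty$ for every $\eta > 1$), whereas under a strong polynomial moment of order $\beta$, $\bE_\cA(h^\beta) < \infty$. Combining these with Lindvall's theorem yields the two claims of the lemma; once aperiodicity is in hand, the rest of the argument is mechanical.
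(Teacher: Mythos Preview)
Your proposal is correct and matches the paper's own approach exactly: the paper simply writes ``Proceeding as in the proof of Lemma~\ref{lem:nngh} and taking into account the bounds on the tails of $h$ proved in \cite{K17}'' without further detail, and you have faithfully carried this out, including the one new point (aperiodicity of $(\tg_n)$ via $\gcd\{h(w)/p\}=1$) that makes Lindvall's theorem applicable.
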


In addition, proceeding as in the proof of  Lemma \ref{lmacovtildeX}, we also get the bound: 

\begin{lemma} \label{lmacovtildeXbis}  Assume that $\bE({\tilde T}) < \infty$. Then, for any $k \geq 1$ and any $\alpha \geq 1$, 
\[
|\Cov ({\tilde X}_0, {\tilde X}_k ) | \ll k^{- \alpha /2} + \bP ( {\tilde T} \geq [k / 4\alpha]) \, .
\]
\end{lemma}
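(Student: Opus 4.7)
The plan is to mimic the proof of Lemma \ref{lmacovtildeX} essentially verbatim in the new framework, replacing $(g_n, \psi, T, S_0)$ by $(\tg_n, \tpsi, \tilde T, S_0 \cap \tS_0)$. Three ingredients are needed for the transplant to work: (i) $\tpsi$ is bounded and Lipschitz with respect to the separation metric $\td$, which is guaranteed by the observation $\Lip \tpsi \leq p \Lip \psi$ and $|\tpsi|_\infty \le p|\psi|_\infty$ made just before the lemma; (ii) $\tpsi$ is $\bP_\tOmega$-centered; (iii) the set $\{(w,0) : w \in \cA\} \subset \tS_0$ is a recurrent atom for $(\tg_n)$, and the coupling inequality from \eqref{couplinginebeta} relates the $\beta$-mixing coefficients of this chain to $\bP(\tilde T \geq n)$, exactly as for $(g_n)$.

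The first step would be to establish the analog of Proposition \ref{prop:boundofthedelta1first}. Namely, define
\[
  \tilde \delta_\ell(\tg_0, \tg_1, \ldots)
  = \sup \bigl| \tpsi(\tg_0,\ldots,\tg_\ell,\tg_{\ell+1},\ldots) - \tpsi(\tg_0,\ldots,\tg_\ell,\tilde h_{\ell+1},\ldots)\bigr|,
\]
where the supremum runs over admissible tails. Since $\tpsi$ is Lipschitz with respect to $\td$, we obtain $\tilde \delta_\ell \leq C\theta^{\tilde s_\ell}$ where $\tilde s_\ell$ counts visits to the atom up to time $\ell$. Splitting the expectation at $\frac{1}{2}(\ell+1)\tnu(\text{atom})$ and invoking Rio's deviation bound \cite[Thm.~6.2]{Ri00} together with \eqref{couplinginebeta} for $(\tg_n)$ yields $\bE(\tilde \delta_\ell) \ll \ell^{-r/2} + \bP(\tilde T \geq [\ell/r])$ for every $r \geq 1$, just as in the original proof.

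The second step carries out the covariance decomposition. For $k\ge 2$, introduce an independent copy $(\eps_i')$ of the innovations driving $(\tg_n)$ (independent of $\tg_0$), and define $\tg'_i = \tilde U(\tg_{i-1}', \eps'_i)$ for $i > k - [k/2]$ starting from $\tg'_{k-[k/2]} = \tg_{k-[k/2]}$. Set
\[
  \tilde X_{0,k} = \bE_{\tg}\bigl(\tpsi(\tg_0,\ldots,\tg_{k-[k/2]},(\tg'_i)_{i > k-[k/2]})\bigr).
\]
Then $|\Cov(\tilde X_0,\tilde X_k)| \leq |\tpsi|_\infty \,\|\tilde X_0 - \tilde X_{0,k}\|_1 + |\bE(\tilde X_{0,k}\tilde X_k)|$. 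The first term is controlled by the oscillation bound from step one applied with $\ell = [k/2]$, giving $k^{-\alpha/2} + \bP(\tilde T \geq [k/(4\alpha)])$. For the second term, using the Markov property and stationarity, $|\bE(\tilde X_{0,k}\tilde X_k)| \leq |\tpsi|_\infty \|\bE(\tilde X_{[k/2]}\mid \tg_0)\|_1$; comparing with the chain $(\tg_n^*)$ started from $\tnu$ and independent of $\tg_0$, we obtain $\|\bE(\tilde X_{[k/2]}\mid \tg_0)\|_1 \leq \|\tilde X_{[k/2]} - \tilde X_{[k/2]}^*\|_1 \leq 2|\tpsi|_\infty \bP(\tilde T > [k/2])$. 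Combining the two bounds yields the claim.

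I do not expect any substantive obstacle. The only point that requires a small verification is that the coupling inequality \eqref{couplinginebeta} and Rio's bound used in Proposition \ref{prop:boundofthedelta1first} transfer cleanly to $(\tg_n)$; this is automatic since $(\tg_n)$ is a stationary aperiodic (by construction of the $p$-skeleton) irreducible Markov chain with a recurrent atom, and $\bE(\tilde T) < \infty$ is assumed. The case $k=1$ is trivial by boundedness of $\tpsi$.
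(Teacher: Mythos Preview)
Your proposal is correct and follows exactly the approach the paper intends: the paper simply states that Lemma~\ref{lmacovtildeXbis} is obtained by ``proceeding as in the proof of Lemma~\ref{lmacovtildeX}'', and you have carried out precisely that transplant, correctly identifying the three ingredients (Lipschitz and boundedness of $\tpsi$, centering, and the recurrent atom $S_0 \subset \tS_0$ for the aperiodic chain $(\tg_n)$) needed for the argument to go through verbatim. The only trivial imprecision is that the oscillation bound is applied with $\ell = k - [k/2]$ rather than $[k/2]$, but these coincide up to one unit and yield the same estimate.
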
 

Now, with the same arguments as those developed in Section~\ref{sec:proofASIP} and taking into account
Lemmas \ref{lem:nnghtilde}  and 
\ref{lmacovtildeXbis},
we infer that, enlarging the  underlying probability space if necessary, there exists a sequence $(N_i)_{i \geq 1}$ of iid centered Gaussian r.v.'s with variance 
\beq \label{defseriesv2}
v^2 =  \Var ({\tilde X}_0) + 2 \sum_{k \geq 1} \Cov ({\tilde X}_0, {\tilde X}_k )
\eeq
such that,  for any $\kappa >1/\beta$, 
\begin{equation*} 
\sup_{k \leq n } \Big | \sum_{i=0}^{k-1} \tilde X_i - \sum_{i=1}^k N_i \Big  | = o( n^{ 1/\beta} (\log n)^{\eta \kappa} )  \quad  a.s. 
\end{equation*}
where $\eta =0$ if the return times of $f$ have strong polynomial moment of order $\beta > 2$ and $\eta =1$ if the return times of $f$ have weak polynomial moment of order $\beta > 2$. 

Now, according to  Corollary \ref{cor:perper},  the ASIP for $\sum_{k=0}^{n-1} \psi \circ \sigma^k$ on $(\Omega, \bP_\Omega)$ (and then the ASIP for $\sum_{k=0}^{n-1} \varphi \circ f^k$ on $(X, \mu)$)  holds with variance $v^2/p$ and 
rate $o( n^{ 1/\beta} (\log n)^{\eta \kappa} )$. It remains to  check that
$v^2/p=c^2$, with $c^2$ given by \eqref{eq:c2nomix}. Since by Lemmas \ref{lmacovtildeXbis} and \ref{lem:nnghtilde}, the series defined in \eqref{defseriesv2} is absolutely convergent, we have
\[
v^2 = \lim_{n \rightarrow \infty} \frac1 n  \Var \Big ( \sum_{k=0}^{n-1} \tilde X_n \Big )  = \lim_{n \rightarrow \infty} \frac 1 n \bE_{\bP_{\tilde \Omega}} \Big ( \Big ( \sum_{k=0}^{n-1} \tpsi \circ \tsigma^k  \Big )^2\Big ) \, .
\]
Hence, according to Lemma \ref{prop:ato}, 
\[
v^2  = \lim_{n \rightarrow \infty}  \frac 1 n
\bE_{\bP_{ \Omega}} \Big ( \Big ( \sum_{k=0}^{np-1} \psi \circ \sigma^k  \Big )^2\Big ) =  \lim_{n \rightarrow \infty} \frac 1 n 
\Vert S_{np} (\varphi) \Vert_{2, \mu}^2  = p c^2 \, .
\]
This ends the proof of Theorem~\ref{thm:NUE} when $c^2>0$.

\subsection{Optimality of the rates}

In this subsection we prove Proposition~\ref{lem:OR}. In fact we prove a stronger statement
as follows.
We consider a nonuniformly expanding map $f \colon X \to X$ as above.
We assume that:
\begin{itemize}
  \item $\tau$ is the first return time to $Y$;
  \item for some $\beta >2$, $\kappa > 0$ and all $n \geq 1$,
    \[
      m(\tau \geq n ) \geq \frac{\kappa}{n^{\beta}}
      \, .
    \]
\end{itemize}
These assumptions are verified for the map~\eqref{eq:LSV}
with $\beta = 1/\gamma$ and $Y = [1/2,1]$.

\begin{prop} 
  \label{prop-optimal}
Let $\psi$ be a bounded observable such that 
$\psi \equiv 0$ on $X \backslash Y$ and $\mu(\psi)>0$, 
and let $\varphi= \psi -\mu(\psi)$. 
Then for every process $(Z_n)_{n\in \bN}$ with the same law as $(\varphi\circ f^n)_{n\in \bN}$ and  every stationary and Gaussian centered sequence $(g_k)_{k \in {\mathbb Z}}$ such that $ n^{-1} {\rm Var }
\big (\sum_{i=1}^n g_i \big )$ converges, living on a same probability space,
\[
\limsup_{n \rightarrow \infty}  \, (n \log n)^{-1/\beta} \Big |  \sum_{k=1}^n Z_k -\sum_{k=1}^n g_k \Big | >0 \mbox{ almost surely.}
\]
\end{prop}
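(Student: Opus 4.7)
My strategy is to exploit the fact that on a laminar excursion away from $Y$ of length $L$, the increment of $\sum Z_k$ is a large deterministic quantity $\approx -L\mu(\psi)$, whereas the corresponding increment of $\sum g_k$ is a centered Gaussian of standard deviation $O(\sqrt{L})$, which is much smaller than $L$. If such excursions occur sufficiently often---namely at the critical scale $L \asymp (n\log n)^{1/\beta}$ by time $n$---then $|\sum_{k=1}^n Z_k - \sum_{k=1}^n g_k|$ cannot be $o((n\log n)^{1/\beta})$. Concretely, since $\psi \equiv 0$ on $X \setminus Y$, the observable $\varphi = \psi - \mu(\psi)$ equals $-\mu(\psi)$ outside $Y$, so if $f^T x \in Y$ and $\tau(f^T x) = L$, then $\sum_{k=T+1}^{T+L-1} \varphi(f^k x) = -(L-1)\mu(\psi)$.

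The first main step will be a dynamical Borel--Cantelli statement: for some $c_1 > 0$, $\mu$-a.e.\ $x$ admits infinitely many successive return times $T_j$ to $Y$ such that the subsequent excursion has length $L_j := \tau(f^{T_j}x) \geq c_1 (T_j \log T_j)^{1/\beta}$. Indeed, from $m(\tau \geq t) \geq \kappa t^{-\beta}$ and the two-sided bound on $d\mu_Y/dm$, one has $\mu_Y(\tau \geq c(n\log n)^{1/\beta}) \geq c'/(n\log n)$, which is not summable. Since $F$ inherits good mixing properties from the nonuniformly expanding structure, a quantitative Borel--Cantelli lemma will yield $\tau \circ F^n \geq c(n\log n)^{1/\beta}$ infinitely often $\mu_Y$-almost surely. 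Passing back to the original time scale via $T_j / j \to \int \tau \, d\mu_Y$ (Birkhoff applied to $F$) will give the claim. Since $\beta > 2$ one has $L_j = o(T_j)$, so each long excursion stays within a window of length comparable to $T_j$.

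On the Gaussian side, stationarity together with the existence of $\lim n^{-1}\Var(\sum_{i=1}^n g_i)$ gives a uniform bound $\Var(\sum_{k=N+1}^{N+L} g_k) \leq C L$. Assume for contradiction that the event $\Omega^* = \{\sum_{k=1}^n Z_k - \sum_{k=1}^n g_k = o((n\log n)^{1/\beta})\}$ has positive probability, and let $\Omega_d$ be the full-measure dynamical event above. On $\Omega^* \cap \Omega_d$, subtracting the approximation bound at the indices $T_j$ and $T_j + L_j - 1$ and using Step~1 will give, for all sufficiently large $j$,
\[
\sum_{k=T_j+1}^{T_j+L_j-1} g_k \;\leq\; -\tfrac{1}{2} L_j \mu(\psi),
\]
since the $o((n \log n)^{1/\beta})$ error is absorbed into $o(L_j)$. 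On the other hand, the standard Gaussian tail gives $P(\sum_{k=N+1}^{N+L} g_k \leq -L\mu(\psi)/2) \leq 2 \exp(-cL)$ with $c = \mu(\psi)^2/(8C)$; summing over $N \geq 1$ and $L \in [\lceil c_1 (N\log N)^{1/\beta}\rceil, N]$ yields a convergent series, so by Borel--Cantelli almost surely only finitely many pairs $(N,L)$ in this range satisfy the inequality. This contradicts the infinitely-often conclusion on $\Omega^* \cap \Omega_d$, so $P(\Omega^*) = 0$ and the proposition follows.

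The principal hurdle is the dynamical Borel--Cantelli step: the i.i.d.\ analogue is immediate from the divergence of $\sum 1/(n \log n)$, but here one must invoke quantitative mixing (for instance a spectral gap for the transfer operator of $F$) to handle the dependence of the events $\{\tau \circ F^n \geq c(n\log n)^{1/\beta}\}$. The remaining ingredients---the elementary geometric computation on excursions, Gaussian concentration, and the final union bound---are routine once the variance control for $(g_k)$ is in place.
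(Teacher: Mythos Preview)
Your proposal is correct and follows essentially the same route as the paper. The paper establishes the identical dynamical Borel--Cantelli claim---that $\tau\circ F^n \geq (n\log n)^{1/\beta}$ infinitely often $\mu_Y$-a.s.---via the Erd\H{o}s--R\'enyi criterion, using the exponential decorrelation bound $|\mu_Y(A_k\cap A_n)-\mu_Y(A_k)\mu_Y(A_n)|\leq C\theta^{|n-k|}\mu_Y(A_k)\mu_Y(A_n)$ that comes from the Gibbs--Markov structure of $F$; it then defers the Gaussian contradiction step to the proof of \cite[Prop.~15]{CDM17}, which is precisely the argument you have written out (long excursions force $\sum g_k$ to be $\leq -cL$ on a block of length $L$, contradicting the Gaussian tail via Borel--Cantelli). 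One small remark: your claim $L_j=o(T_j)$ is not a consequence of $\beta>2$ but rather of the Birkhoff convergence $T_j/j\to\int\tau\,d\mu_Y$ you already invoked (since $L_j=T_{j+1}-T_j$ along the full sequence of returns); this is harmless.
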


\begin{rmk}
  Under relaxed assumptions, there exist Lipshitz observables $\varphi$
  with $\int \varphi \, d\mu = 0$
  satisfying the hypotheses of Proposition~\ref{prop-optimal}.
  Indeed, if $m$ is regular and $\mu(\mathring{Y})\neq 0$, then 
  there is a compact $K\subset \mathring{Y}$ such that $\mu(K)>0$. 
  Note that $K$ and $\overline{X\backslash Y}$ are closed disjoint sets.
  Thus $\psi \colon X \to \bR$,
  \[
    \psi(x):= \frac{d(x,\overline{X\backslash Y})}{d(x,\overline{X\backslash Y})+d(x,K)} 
  \]
  is Lipschitz, and so is $\varphi = \psi - \int \psi \, d\mu$.
\end{rmk}

\begin{rmk}
  If $f \colon X \to X$ is a  Young tower~\cite{Y99}, then one can take 
  $\varphi = {\mathbf 1}_{Y} - \mu(Y)$. Then $\varphi$
  is Lipschitz with respect to the distance on the tower.
\end{rmk}

\begin{proof}[\it Proof of Proposition \ref{prop-optimal}]
Recall that $\mu_Y$ is the $F$-invariant probability measure on $Y$. 
For $n \geq 0$, let $\tau_n = \tau \circ F^n$.
We claim that $\mu_Y$-almost surely,
$n^{-1} \sum_{i=1}^n { \tau}_i  \to \int \tau \, d\mu$  as $n \rightarrow \infty$ 
and  ${ \tau}_n \geq  (n\log n)^{1/\beta}$ infinitely often. 
Then our result follows as in the proof of \cite[Prop.~15]{CDM17}.

It remains to verify the claim. Its first part is provided
by the pointwise ergodic theorem, so further we verify the second part.
We follow Gou\"ezel~\cite{GBC}. 

For $n\ge 0$, let $A_n = \{y \in Y \colon \tau_n(y) \geq (n \log n)^{1/\beta}\}$.
Recall that there is a constant $c > 0$ such that
for all $n,k \geq 0$,
\[
  \mu_Y (\tau_n = k) \geq c \, m(\tau = k)
  \, .
\]
Thus
\beq
  \label{eq:hhaah}
  \sum_{n=0}^\infty \mu_Y(A_n)
  \geq c \sum_{n=0}^\infty m\bigl(\tau \geq (n \log n)^{1/\beta}\bigr)
  = \infty
  \, .
\eeq
Next, there are constants $C > 0$ and $\theta \in ]0,1[$ such that
for all $k \neq n \geq 0$,
\[
  \bigl| \mu_Y(A_k \cap A_n)  - \mu_Y(A_k) \mu_Y(A_n)  \bigr| 
  \leq C \theta^{|n-k|} \mu_Y(A_k) \mu_Y(A_n) 
  \, .
\]
(See for instance the last line of~\cite[Sec.~1]{AD}.)
Therefore,
\begin{align*}
  \Bigl| \sum_{1 \leq k, \ell \leq n} & \mu_Y(A_k \cap A_\ell) 
  - \sum_{1 \leq k, \ell \leq n} \mu_Y(A_k) \mu_Y(A_\ell) \Bigr|
  \leq
  \sum_{1 \leq k, \ell \leq n} \bigl|\mu_Y(A_k \cap A_\ell) - \mu_Y(A_k) \mu_Y(A_\ell)\bigr|
  \\ & \ll \sum_{k=1}^{n} \mu_Y(A_k) +
  \sum_{1 \leq k, \ell \leq n} \theta^{|\ell-k|} \mu_Y(A_k) \mu_Y(A_\ell)
  \ll \sum_{k=1}^n \mu_Y(A_k) 
  \, .
\end{align*}
Taking into account~\eqref{eq:hhaah}, we obtain
\[
  \lim_{n \to \infty} 
  \frac{
    \sum_{1 \leq k, \ell \leq n} \mu_Y(A_k \cap A_\ell)
  }{
    \bigl( \sum_{k=1}^n \mu_Y(A_k) \bigr)^2
  }
  = 1 \, .
\]
By \cite[Lemma C]{ER}, we verify a criterion for the second Borel-Cantelli lemma
and prove that $\mu_Y( \cap_{n=1}^\infty \cup_{k=n}^\infty A_k) = 1$,
i.e.\ that $\mu_Y$-almost surely, $\tau_n \geq (n \log n)^{1/\beta}$ infinitely often.
This completes the proof of the claim.
\end{proof}

\appendix

\section{ASIP for periodic dynamical systems}
\label{sec:periodic}

Suppose that $(\Omega, \bP)$ is a probability space and
$\sigma \colon \Omega \to \Omega$ is a measure preserving transformation.

Suppose that $p \geq 2$ is an integer and $\sigma$ is $p$-periodic in
the sense that $\Omega$ can be partitioned into disjoint subsets
$\Omega_0, \ldots, \Omega_{p-1}$
which are permuted by $\sigma$ cyclically:
$\sigma(\Omega_k) = \Omega_{k+ 1 \;\bmod\; p}$. In particular $\bP ( \Omega_k ) =1/p $ for any $k=0, \dots, p-1$.  

Let $\tsigma \colon \Omega_0 \to \Omega_0$, $\tsigma = \sigma^p$.
We refer to $\tsigma$ as the \emph{induced map.}
The space $\Omega_0$ is endowed with a probability measure $\bP_0$, which is
$\bP$ conditioned on $\Omega_0$. Note that $\bP_0$ is invariant under $\tsigma$.

Suppose that $\psi \colon \Omega \to \bR$ is an observable
with $|\psi|_\infty = \sup_\Omega |\psi| < \infty$.
Define the \emph{induced observable} $\tpsi \colon \Omega_0 \to \bR$,
\[
  \tpsi(x) = \sum_{k=0}^{p-1} \psi(\sigma^k(x))
 \,  .
\]

Denote
\[
  \psi_n = \sum_{k=0}^{n-1} \psi \circ \sigma^k
  \quad \text{and} \quad
  \tpsi_n = \sum_{k=0}^{n-1} \tpsi \circ \tsigma^k
\,   .
\]
We consider $\psi_n$ and $\tpsi_n$ as random processes, defined
on probability spaces $(\Omega, \bP)$ and $(\Omega, \bP_0)$ respectively.  Define a projection $\pi_0 \colon \Omega \to \Omega_0$ by
  \begin{equation} \label{defpi0}
  \begin{aligned}
   \pi_0 (x)  & = \begin{cases}
    x &  \text{ if } x \in \Omega_0 \\
     \sigma^{p-k }(x)  & \text{ if } x \in \Omega_k \, , \, k=1, \ldots, p-1 \, .
       \end{cases}
  \end{aligned}
\end{equation}
\begin{lemma}
  \label{prop:ato} We have 
  \beq  \label{bounddiff}
    |\psi_n - \tpsi_{[ n/p]} \circ \pi_0|_\infty \leq 2 p |\psi|_\infty
  \,   .
\eeq
Moreover,  if $ \lim_{n \rightarrow \infty} n^{-1} \int_{\Omega}  \psi_n^2 \bP (\omega) \, d \omega = c^2$, then $ \lim_{n \rightarrow \infty} n^{-1} \int_{\Omega_0}  \tpsi_{[n/p]} ^2 \bP_0 (\omega) \, d \omega = c^2$. 
\end{lemma}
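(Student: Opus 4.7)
My plan for Lemma \ref{prop:ato} proceeds in three short steps: a direct unfolding to prove \eqref{bounddiff}, a check that $\pi_0$ pushes $\bP$ forward to $\bP_0$, and a Cauchy--Schwarz estimate to promote the pathwise bound to the variance statement.

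For \eqref{bounddiff}, I would fix $\omega \in \Omega_k$ with $0 \leq k < p$ and unfold the definitions of $\tpsi$, $\tsigma = \sigma^p$ and $\pi_0$ (from \eqref{defpi0}). Setting $k' = 0$ when $k=0$ and $k' = p-k$ otherwise, this gives
\[
  \tpsi_{[n/p]}(\pi_0(\omega))
  = \sum_{j=0}^{[n/p]-1} \sum_{i=0}^{p-1} \psi(\sigma^{jp + i + k'} \omega)
  = \sum_{m = k'}^{k' + p[n/p] - 1} \psi(\sigma^m \omega).
\]
Since $k' \in \{0,\dots,p-1\}$ and $r := n - p[n/p] \in \{0,\dots,p-1\}$, the symmetric difference of the index sets $\{0,\dots,n-1\}$ and $\{k',\dots,k' + p[n/p] - 1\}$ has at most $k' + |k' - r| \leq 2(p-1)$ elements, which immediately yields the bound $|\psi_n - \tpsi_{[n/p]} \circ \pi_0|_\infty \leq 2p|\psi|_\infty$.

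Next I would verify that $(\pi_0)_* \bP = \bP_0$. For a measurable $A \subset \Omega_0$, one has
\[
  \pi_0^{-1}(A) = A \sqcup \bigsqcup_{k=1}^{p-1} \bigl( \sigma^{-(p-k)}(A) \cap \Omega_k \bigr),
\]
and since $\sigma^{p-k}$ sends $\Omega_k$ into $\Omega_0$, the set $\sigma^{-(p-k)}(A)$ is already contained in $\Omega_k$. By $\sigma$-invariance of $\bP$, each of the $p$ disjoint pieces has measure $\bP(A)$, so $\bP(\pi_0^{-1}(A)) = p\,\bP(A) = \bP_0(A)$.

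Finally, set $R_n := \tpsi_{[n/p]} \circ \pi_0 - \psi_n$, so that $|R_n|_\infty \leq 2p|\psi|_\infty$ by the first step. Using the push-forward identity just established,
\[
  \int_{\Omega_0} \tpsi_{[n/p]}^2 \, d\bP_0
  = \int_\Omega (\psi_n + R_n)^2 \, d\bP
  = \int_\Omega \psi_n^2 \, d\bP + 2 \int_\Omega \psi_n R_n \, d\bP + \int_\Omega R_n^2 \, d\bP.
\]
The last summand is $O(1)$, and by Cauchy--Schwarz together with the hypothesis $\|\psi_n\|_{L^2(\bP)}^2 = c^2 n + o(n)$, the middle term is $O(\sqrt{n})$. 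Dividing by $n$ and letting $n \to \infty$ yields the claimed convergence. I do not foresee any substantive obstacle: the bookkeeping for the symmetric difference in the first step is the most error-prone part, but the bound $2(p-1)$ is comfortably within the stated $2p|\psi|_\infty$.
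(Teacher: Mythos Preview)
Your proof is correct and follows essentially the same approach as the paper's: the symmetric-difference count for \eqref{bounddiff}, the push-forward identity $(\pi_0)_*\bP=\bP_0$, and then passing from $\psi_n$ to $\tpsi_{[n/p]}\circ\pi_0$ in $L^2$. The paper's version is much terser---it illustrates \eqref{bounddiff} only for $\omega\in\Omega_1$ and merely asserts $(\pi_0)_*\bP=\bP_0$ without the Cauchy--Schwarz step---so your write-up actually fills in details the paper leaves to the reader.
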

\begin{proof}
The bound  \eqref{bounddiff} is obvious. Indeed, for instance if $x \in \Omega_{1}$, it suffices to write
\[
|\psi_n - \tpsi_{[ n/p]} \circ \pi_0  | = \Big |  \sum_{k=0}^{n-1} \psi \circ \sigma^k -\sum_{k=p-1}^{ p {[ n/p ]}  +p -2} \psi \circ \sigma^k  \Big |  \leq 2 (p-1)
 |\psi|_\infty \, .
\]
To end the proof of the lemma, 
 note that $(\pi_0)_* \bP = \bP_0$,
  thus $\tpsi_n \circ \pi_0$, defined on the probability space $(\Omega, \bP)$, has
  the same distribution as $\tpsi_n$ on $(\Omega_0, \bP_0)$.
\end{proof}  
 \begin{cor}
  \label{cor:perper}
  Let $(b_n)_{n \geq 1}$ be a regularly varying sequence with values in ${\mathbb R}^+$, and such that $b_n (\log n)^{-1/2} \rightarrow \infty$ as 
  $n \rightarrow \infty$. 
 Assume that $\Omega$ can be enlarged in such a way that there exists
  a Brownian motion $\tW_t$ (with variance $v^2$) such that
  \[
    \tpsi_n \circ \pi_0 = \tW_n + o(b_n)
    \quad \text{almost surely.}
  \]
  Then, on the same probability space, there is a Brownian motion $W_t$ (with variance $c^2=v^2/p$) such that  
  \[
    \psi_n = W_n + o(b_n)
    \quad \text{almost surely.}
  \]
\end{cor}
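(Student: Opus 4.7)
\medskip

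\noindent\textbf{Proof plan for Corollary~\ref{cor:perper}.}
The natural candidate for the limiting Brownian motion is the time-rescaled process
$W_t := \tW_{t/p}$. Indeed, $W$ is a centered Gaussian process with covariance
$\Cov(W_s,W_t) = v^2 \min(s,t)/p$, so $W$ is a Brownian motion with variance
$c^2 = v^2/p$, defined on the same enlarged probability space as $\tW$. With this
choice in hand, the rest of the argument is a decomposition into three pieces, each
of which is shown to be $o(b_n)$ almost surely.

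First, I would write
\[
  \psi_n - W_n
  = \bigl(\psi_n - \tpsi_{[n/p]} \circ \pi_0\bigr)
  + \bigl(\tpsi_{[n/p]} \circ \pi_0 - \tW_{[n/p]}\bigr)
  + \bigl(\tW_{[n/p]} - \tW_{n/p}\bigr).
\]
The first term is bounded uniformly in $n$ by $2 p |\psi|_\infty$, directly from the
estimate \eqref{bounddiff} in Lemma~\ref{prop:ato}; in particular it is $o(b_n)$
almost surely since $b_n \to \infty$. The second term is $o(b_{[n/p]})$ almost surely by
the hypothesis of the corollary, and since $(b_n)$ is regularly varying one has
$b_{[n/p]} = O(b_n)$, hence this piece is also $o(b_n)$ almost surely.

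The remaining term is the increment of the Brownian motion $\tW$ over a time interval of
length less than one unit, namely $n/p - [n/p] \in [0,1)$. To control it uniformly in $n$
I would invoke the classical a.s.\ bound
\[
  \sup_{0 \le s \le 1} |\tW_{k+s} - \tW_k|
  = O\bigl(\sqrt{\log k}\bigr)
  \quad \text{as } k \to \infty \text{ a.s.,}
\]
which follows from Gaussian tail estimates together with the first Borel--Cantelli
lemma applied to the events $\{\sup_{0 \le s \le 1}|\tW_{k+s}-\tW_k| \ge C\sqrt{\log k}\}$.
This gives $|\tW_{n/p} - \tW_{[n/p]}| = O(\sqrt{\log n})$ almost surely, and by the
standing assumption $b_n (\log n)^{-1/2} \to \infty$ this last piece is $o(b_n)$
almost surely.

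The only step requiring any care is the Brownian modulus estimate of the third term;
everything else is either the deterministic bound of Lemma~\ref{prop:ato} or the
direct transport of the given ASIP through the projection $\pi_0$, using that
$(\pi_0)_*\bP = \bP_0$ so that $\tpsi_{[n/p]}\circ \pi_0$ on $(\Omega,\bP)$ has the
same law as $\tpsi_{[n/p]}$ on $(\Omega_0,\bP_0)$. Combining the three bounds gives
$\psi_n = W_n + o(b_n)$ almost surely, with $W$ a Brownian motion of variance $c^2$,
as required.
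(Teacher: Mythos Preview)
Your proof is correct and follows essentially the same approach as the paper: define $W_t = \tW_{t/p}$, combine the deterministic bound \eqref{bounddiff} from Lemma~\ref{prop:ato} with the assumed ASIP, and control the leftover Brownian increment $\tW_{[n/p]} - \tW_{n/p}$ by an $O(\sqrt{\log n})$ modulus estimate (the paper cites \cite[Thm.~3.2A]{HR} for this, where you sketch a Borel--Cantelli argument). Your three-term decomposition is simply a more explicit rendering of the paper's two-line argument; the only minor remark is that the paragraph about $(\pi_0)_*\bP = \bP_0$ is unnecessary here, since the hypothesis is already stated directly for $\tpsi_n \circ \pi_0$ on the enlarged $\Omega$.
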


\begin{proof}
By assumption and
  Lemma~\ref{prop:ato}, 
  \[
    \psi_n = \tW_{ [ n/p]} + o(b_n)
    \quad \text{almost surely.}
  \]
  Then $W_t = \tW_{t/p}$ is a Brownian motion (with variance $c^2=v^2/p$), and
  \[
    \sup_{s \leq t} |\tW_{s/p} - \tW_{[ s/p ]}| = O( (\log t )^{1/2})
    \quad \text{almost surely.}
  \]
(See, for instance, Theorem 3.2A in \cite{HR}). 
  The result follows.
\end{proof}

\subsection*{Acknowledgements} A.K.~is partially supported by 
a European Advanced Grant {\em StochExtHomog} (ERC AdG 320977);
is thankful to Ian Melbourne for helpful discussions;
acknowledges the warm hospitality of Paris-Est  and
Paris Descartes universities.


\begin{thebibliography}{1}

\bibitem{AD} J.~Aaronson,  M.~Denker,
\emph{Local limit theorems for partial sums of stationary sequences generated by Gibbs-Markov maps,}
Stoch. Dyn. {\bf 1}, no. 2 (2001), 193--237.

\bibitem{BLW14} I.~Berkes, W.~Liu, W.~Wu,
\emph{Koml\'os-Major-Tusn\'ady approximation under dependence,}
Ann. Probab. \textbf{42} (2014),
794--817.

\bibitem{CDM17} C.~Cuny, J.~Dedecker, F.~Merlev\`ede,
\emph{On the Koml\'os, Major and Tusn\'ady strong approximation for
some classes of random iterates,}   Stochastic Process. Appl. {\bf 128} (2018), no. 4, 1347--1385.

\bibitem{CM15} C.~Cuny, F.~Merlev\`ede,
\emph{Strong invariance principles with rate for ``reverse'' martingales and applications,}
J. Theoret. Probab. (2015), 137--183.

\bibitem{ER} P.~Erd\H{o}s and A. R\'enyi, 
\emph{On Cantor's series with convergent $\sum 1/q_n$,}
Ann. Univ. Sci. Budapest. E\"otv\"os. Sect. Math. {\bf 2} (1959), 93--109.

\bibitem{G04} S.~Gou\"ezel,
\emph{Sharp polynomial estimates for the decay of correlations,}
Israel J. Math. \textbf{139} (2004), 29--65.

\bibitem{G04t} S.~Gou\"ezel,
\emph{Vitesse de d\'ecorr\'elation et th\'eor\`emes limites pour les applications
non uniform\'ement dilatantes,}
PhD thesis, Orsay, 2004.

\bibitem{GBC} S.~Gou\"ezel, 
\emph{A Borel-Cantelli lemma for intermittent interval maps,} 
Nonlinearity \textbf{20} (2007), 1491--1497.

\bibitem{HR} D.L.~Hanson and  R.P.~Russo, \emph{Some results on increments
of the Wiener process with applications to lag sums of i.i.d. random variables}. 
Ann. Probab. \textbf{11} (1983), no. 3, 609--623.

\bibitem{H05} M.~Holland,
\emph{Slowly mixing systems and intermittency maps,}
Ergodic Theory Dynam. Systems
\textbf{25} (2005), 
133--159.

\bibitem{H04} H.~Hu,
\emph{Decay of correlations for piecewise smooth maps with indifferent fixed points,} 
Ergodic Theory Dynam. Systems
\textbf{24} (2004), 
495--524.

\bibitem{KMT75} J.~ Koml\'os, P.~Major, G.~Tusn\'ady,
\emph{An approximation of partial sums of independent RV'-s and the sample DF}. \textrm{I; II,}
Z. Wahrscheinlichkeitstheor. verw. Geb.
\textbf{32} (1975), 111--131;
\textbf{34} (1976), 34--58.

\bibitem{K17} A.~Korepanov,
\emph{Equidistribution for nonuniformly expanding systems,}
Comm. Math. Phys.
\textbf{359} (2018),
1123--1138.

\bibitem{K17r} A.~Korepanov,
\emph{Rates in almost sure invariance principle for dynamical systems with some hyperbolicity,}
Comm. Math. Phys. \textbf{363} (2018),
173--190.

\bibitem{KKM} A.~Korepanov, Z.~Kosloff and I.~Melbourne.
\emph{Martingale-coboundary decomposition for families of dynamical systems,}
Ann. Inst. H. Poincar\'e Anal. Non Lin\'eaire \textbf{35} (2018),
859--885.

\bibitem{Li79} T.~Lindvall,
\emph{On Coupling of Discrete Renewal Processes,}  
Z. Wahrscheinlichkeitstheorie verw. Gebiete {\bf 48} (1979), 57--70.


\bibitem{LSV99} C. Liverani, B. Saussol, and S. Vaienti,
\emph{A probabilistic approach to intermittency,}
Ergodic Theory Dynam. Systems,
\textbf{19} (1999),
671--685.

\bibitem{MN05} I. Melbourne and M. Nicol,
\emph{Almost sure invariance principle for nonuniformly hyperbolic systems,}
Comm. Math. Phys.
\textbf{260} (2005),
131--146.

\bibitem{MN09} I. Melbourne and M. Nicol,
\emph{A vector-valued almost sure invariance principle for hyperbolic dynamical systems,}
Ann. Probab. (2009), 478--505.

\bibitem{MR12} F.~Merlev\`ede and E.~Rio, \emph{Strong approximation of partial sums under dependence conditions with application to dynamical systems,}
Stochastic Process. Appl., {\bf 122} (2012), 386--417.

\bibitem{PM80} Y.~Pomeau and P.~Manneville,
\emph{Intermittent transition to turbulence in dissipative dynamical systems,}
Comm. Math. Phys. \textbf{74} (1980), 189--197.

\bibitem{PS}  W.~Philipp and W.F.~Stout, 
\emph{Almost sure invariance principle for partial sums of weakly dependent random variables,}
Mem. Amer. Math. Soc. {\bf  161} (1975).


\bibitem{Ri00} E.~Rio, 
\emph{Th\'eorie asymptotique des processus al\'eatoires faiblement d\'ependants}, 
Math. Appl. (Berlin) \textbf{31} (2000).

\bibitem{Sa06} A.I.~Sakhanenko,
\emph{Estimates in the invariance principle in terms of truncated power moments,}
Sib. Math. J. {\bf 47} (2006), 1355--1371.

\bibitem{S02} O.~Sarig, 
\emph{Subexponential decay of correlations,}
Invent. Math.
\textbf{150} (2002),
629--653. 

\bibitem{Y98} L.-S. Young,
\emph{Statistical properties of dynamical systems with some hyperbolicity,}
Ann. of Math. \textbf{147} (1998),
585--650.

\bibitem{Y99} L.-S. Young,
\emph{Recurrence times and rates of mixing,}
Israel J. Math.
\textbf{110} (1999), 
153--188.

\bibitem{Z98} R.~Zweim\"uller,
\emph{Ergodic structure and invariant densities of non-Markovian interval
maps with indifferent fixed points,}
Nonlinearity \textbf{11} (1998),
1263--1276.

\bibitem{Z09} R.~Zweim\"uller,
\emph{Measure preserving transformations similar to Markov shifts,}
Israel J. Math. \textbf{173} (2009), 421-443.

\end{thebibliography}
\end{document}